\numberwithin{equation}{section}
\numberwithin{figure}{section}
\theoremstyle{plain}
\newtheorem{thm}{\protect\theoremname}
  \theoremstyle{definition}
  \newtheorem{defn}[thm]{\protect\definitionname}
  \theoremstyle{remark}
  \theoremstyle{plain}
  \newtheorem{conjecture}[thm]{\protect\conjecturename}
  \newtheorem{lem}[thm]{\protect\lemmaname}
  \theoremstyle{definition}
  \theoremstyle{plain}
  \newtheorem{cor}[thm]{\protect\corollaryname}
  \providecommand{\corollaryname}{Corollary}
  \providecommand{\conjecturename}{Conjecture}
  \providecommand{\definitionname}{Definition}
  \providecommand{\examplename}{Example}
  \providecommand{\remarkname}{Remark}
\providecommand{\lemmaname}{Lemma}
\providecommand{\theoremname}{Theorem}
\begin{document}
\title[Approximation Functions to $li(x)$ with $\displaystyle O\left(\sqrt{\frac{x}{\log(x)}}\right)$ Error]{Defining Upper and Lower Bounding Functions of $li(x)$ with $\displaystyle O\left(\sqrt{\frac{x}{\log(x)}}\right)$ Error Using Truncated Asymptotic Series}

\author{Jonatan Gomez \\ jgomezpe@unal.edu.co \\ Computer Systems Engineering \\ Universidad Nacional de Colombia}

\begin{abstract}
    This paper introduces approximation functions of logarithmic integral $li(x)$ for all $x\ge e$: (1) $\displaystyle li_{\underline{\omega},\alpha}(x) = \frac{x}{\log(x)}\left( \alpha\frac{\underline{m}!}{\log^{\underline{m}}(x)} + \sum_{k=0}^{\underline{m}-1}\frac{k!}{\log^{k}(x)} \right)$, and (2) $\displaystyle li_{\overline{\omega},\beta}=\frac{x}{\log(x)}\left( \beta\frac{\overline{m}!}{\log^{\overline{m}}(x)} + \sum_{k=0}^{\overline{m}-1}\frac{k!}{\log^{k}(x)} \right)$
    with $0 < \omega < 1$ a real number, $\alpha \in \{ 0, \underline{\kappa}\log(x) \}$, $\underline{m} = \lfloor \underline{\kappa}\log(x) \rfloor$, $\beta \in \{ \overline{\kappa}\log(x), 1 \}$, $\overline{m} = \lfloor \overline{\kappa}\log(x) \rfloor$, and $\underline{\kappa} < \overline{\kappa}$ the solutions of $\kappa(1-\log(\kappa)) = \omega$. Since the error of approximating the $li(x)$ function using Stieltjes asymptotic approximation series $\displaystyle li_{*}(x) = \frac{x}{\log(x)}\sum_{k=0}^{n-1}\frac{k!}{\log^{k}(x)} + (\log(x)-n)\frac{xn!}{\log^{n+1}(x)}$, with $\displaystyle n = \lfloor \log(x) \rfloor$ for all $x\ge e$, satisfies $\displaystyle |\varepsilon(x)| = |li(x)-li_{*}(x)| \le 1.265692883422\ldots$, by using Stirling's approximation formula and some facts about $\log(x)$ and floor functions, this paper shows that $\displaystyle \varepsilon_{0}(x) = li(x) - li_{\underline{1/2},0}(x)$, $\displaystyle \underline{\varepsilon}(x) = li(x) - li_{\underline{1/2},\underline{\kappa}\log(x)}(x)$, $\displaystyle \overline{\varepsilon}(x) = li(x) - li_{\overline{1/2},\overline{\kappa}\log(x)}(x)$, and $\varepsilon_{1}(x) = li_{\overline{1/2},1}(x) - li(x)$ belong to $O\left(\sqrt{\frac{x}{\log(x)}}\right)$. Moreover, taking into account some theoretical and experimental results, this paper conjectures that $li_{0}(x) \le \pi(x) \le li_{1}(x)$ and $\underline{li}(x) \le \pi(x) \le \overline{li}(x)$ for all $x \ge e$, here $\pi(x)$ is the prime counting function. Finally, this paper shows that if one of those conjectures is true then the Riemann Hypothesis is true.
    
\end{abstract}
\maketitle

\section{\label{sec:introduction}Introduction}
    The prime number theorem, proved independently by Hadamard \cite{Hadamard1896} and de la Vall\'ee Poussin \cite{delaVallee1896}, describes the asymptotic behavior\footnote{function $f(x)$ has the same asymptotic behaviour of function $g(x)$ iff $\displaystyle \lim_{x\to\infty}\frac{f(x)}{g(x)}=1$} of the prime counting $\pi(x)$ as follows: 
    
    \begin{thm}
        \label{thm:PNT} $\displaystyle \pi(x)\sim \frac{x}{\log(x)}$, i.e., $\displaystyle \lim_{x \to \infty}\frac{\pi(x)}{\left(\frac{x}{\log(x)}\right)}=1$
    \end{thm}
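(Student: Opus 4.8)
The plan is to follow the classical analytic route of Hadamard and de la Vallée Poussin, organized around the Riemann zeta function $\zeta(s)=\sum_{n\ge 1}n^{-s}$ and the Chebyshev function $\psi(x)=\sum_{p^{k}\le x}\log p$. First I would record the Euler product $\zeta(s)=\prod_{p}(1-p^{-s})^{-1}$, valid for $\Re(s)>1$, take logarithmic derivatives to obtain $-\zeta'(s)/\zeta(s)=\sum_{n\ge 1}\Lambda(n)n^{-s}$ with $\Lambda$ the von Mangoldt function, and continue $\zeta$ meromorphically to $\Re(s)>0$ with a single simple pole at $s=1$ of residue $1$ (e.g.\ via $\zeta(s)=\frac{s}{s-1}-s\int_{1}^{\infty}\{x\}x^{-s-1}\,dx$, or the Dirichlet eta representation). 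The Dirichlet series above then has a meromorphic continuation to $\Re(s)\ge 1$ whose only pole there is at $s=1$, provided $\zeta$ does not vanish on the line $\Re(s)=1$.

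The central analytic input is exactly that non-vanishing: $\zeta(1+it)\ne 0$ for every real $t\ne 0$. I would prove it from the pointwise inequality $3+4\cos\theta+\cos 2\theta=2(1+\cos\theta)^{2}\ge 0$, which via the Euler product gives $|\zeta(\sigma)^{3}\zeta(\sigma+it)^{4}\zeta(\sigma+2it)|\ge 1$ for $\sigma>1$; letting $\sigma\downarrow 1$ and comparing the triple pole contributed by $\zeta(\sigma)^{3}$ with the zeros forced by a hypothetical $\zeta(1+it)=0$ yields a contradiction. With non-vanishing in hand, $-\zeta'(s)/\zeta(s)-\frac{1}{s-1}$ extends continuously to $\Re(s)\ge 1$.

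Next I would pass from the Dirichlet series to the summatory function via a Tauberian theorem. Using Newman's approach, set $F(s)=\int_{1}^{\infty}(\psi(x)-x)x^{-s-1}\,dx$; one checks $F$ is analytic on $\Re(s)\ge 1$ (using the previous step together with the elementary Chebyshev bound $\psi(x)=O(x)$), and Newman's contour-integration lemma then yields convergence of $\int_{1}^{\infty}(\psi(x)-x)x^{-2}\,dx$, from which $\psi(x)\sim x$ follows by a short monotonicity argument on $\psi$. (The Wiener--Ikehara theorem gives the same conclusion and could be substituted.) Finally, partial summation converts this into the stated estimate: since $0\le \psi(x)-\theta(x)=O(\sqrt{x}\log x)$ with $\theta(x)=\sum_{p\le x}\log p$, one gets $\theta(x)\sim x$, and Abel summation in $\pi(x)=\frac{\theta(x)}{\log x}+\int_{2}^{x}\frac{\theta(t)}{t\log^{2}t}\,dt$ gives $\pi(x)\sim x/\log x$.

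The main obstacle is the Tauberian passage from the behaviour of $-\zeta'/\zeta$ on $\Re(s)=1$ to the asymptotics of $\psi$: a naive Mellin inversion fails because one has no zero-free region strictly to the left of $\Re(s)=1$, only continuity on the line itself, so a genuine Tauberian theorem (Newman's contour trick, or Wiener--Ikehara) is unavoidable. The non-vanishing of $\zeta$ on $\Re(s)=1$ is conceptually the deepest point but technically brief once the $3$-$4$-$1$ inequality is invoked; the remaining ingredients (Euler product, analytic continuation, partial summation) are routine.
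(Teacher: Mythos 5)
The paper does not prove this statement at all: it is quoted as background, with the proof attributed to the original papers of Hadamard and de la Vall\'ee Poussin, and nothing later in the paper depends on the internal details of any particular proof of the Prime Number Theorem. Your proposal is therefore not comparable to a proof in the paper; what you have written is a correct outline of the standard analytic argument (Euler product and $-\zeta'/\zeta=\sum\Lambda(n)n^{-s}$, meromorphic continuation past $\Re(s)=1$, non-vanishing of $\zeta$ on the line $\Re(s)=1$ via the $3$--$4$--$1$ inequality, Newman's Tauberian step giving $\psi(x)\sim x$ from convergence of $\int_{1}^{\infty}(\psi(x)-x)x^{-2}\,dx$, then $\theta(x)\sim x$ and Abel summation to reach $\pi(x)\sim x/\log x$). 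The chain of reductions is sound and the identified crux --- that a genuine Tauberian theorem is needed because one only has information on the boundary line $\Re(s)=1$ --- is exactly right. Be aware, though, that as written this is a plan rather than a proof: the two nontrivial ingredients (the zero-free statement on $\Re(s)=1$ and Newman's contour lemma together with the monotonicity argument deducing $\psi(x)\sim x$) are invoked by name rather than carried out, so to count as a self-contained proof those steps would have to be written in full; for the purposes of this paper, which uses the theorem only as a cited classical fact, that level of detail is not required.
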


    Dirichlet in \cite{LejeuneDirichlet1838} conjectured that a better approximation to $\pi(x)$ is the offset logarithmic integral function $\text{Li}(x)=li(x)-li(2)$ which captures the notion of prime numbers density function.

    \begin{defn}
    \label{{def:li}}
        $\displaystyle li(x)=\int_{0}^{x}\frac{dt}{\log(t)}$ is the \textbf{logarithmic integral} function. Here, $\log(x)$ is the natural logarithm function.
    \end{defn}

    Function $li(x)$ is defined for all positive real number $x \ne 1$ where function $\displaystyle \frac{1}{\log(t)}$ has a singularity.
    
    In \cite{delaVallee1899}, de la Vallée Poussin  proved that $\pi(x) \sim \text{Li}(x)$.

    \begin{thm}
        \label{thm:pi_Li}
        $\displaystyle \pi(x) = \text{Li}(x) + O(xe^{-a\sqrt{\log(x)}})$ as $x \to \infty$ for some positive constant $a$.
    \end{thm}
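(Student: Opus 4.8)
The plan is to prove this sharpening of Theorem~\ref{thm:PNT} by the classical analytic method: transfer everything to the Chebyshev function $\psi(x)=\sum_{n\le x}\Lambda(n)$, represent $\psi$ through a truncated Perron formula for the logarithmic derivative $-\zeta'(s)/\zeta(s)$, and control the resulting contour integral by means of a zero-free region for the Riemann zeta function. First I would record the Perron-type identity $\psi(x) = \frac{1}{2\pi i}\int_{c-iT}^{c+iT}\left(-\frac{\zeta'(s)}{\zeta(s)}\right)\frac{x^{s}}{s}\,ds + O\!\left(\frac{x\log^{2}x}{T}\right)$, valid for $c = 1+\frac{1}{\log x}$ and any $2\le T\le x$, obtained by integrating the Dirichlet series term by term against the usual truncated Mellin kernel.

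The analytic core is the zero-free region, and that is the step I would do next. Applying the elementary trigonometric inequality $3+4\cos\theta+\cos 2\theta\ge 0$ to $\Re\log\!\big(\zeta(\sigma)^{3}\zeta(\sigma+it)^{4}\zeta(\sigma+2it)\big)$ yields an absolute constant $c_{0}>0$ with $\zeta(\sigma+it)\ne 0$ whenever $\sigma\ge 1-\frac{c_{0}}{\log(|t|+2)}$; the same circle of ideas, via simple growth bounds for $\zeta$ and $\zeta'$ in the critical strip and the Borel--Carath\'eodory lemma, gives the companion estimate $\big|\tfrac{\zeta'}{\zeta}(\sigma+it)\big| = O\!\big(\log^{2}(|t|+2)\big)$ uniformly in that region.

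Then I would deform the vertical segment $\Re s=c$ in the Perron integral onto the broken contour tracking the curve $\sigma = 1-\frac{c_{0}}{\log(|t|+2)}$, passing only the simple pole of $-\zeta'/\zeta$ at $s=1$, whose residue supplies the main term $x$. Bounding the shifted vertical part and the two horizontal connectors with the $\zeta'/\zeta$-estimate gives $\psi(x) = x + O\!\left(x\exp\!\big(-c_{1}\tfrac{\log x}{\log T}\big)+\tfrac{x\log^{2}x}{T}\right)$, and choosing $\log T=\sqrt{\log x}$ balances the two error contributions to produce $\psi(x) = x + O\!\big(x\exp(-a_{0}\sqrt{\log x})\big)$ for some $a_{0}>0$. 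I expect this optimization --- and the uniform bookkeeping of the integrand along a contour whose distance to $\Re s=1$ shrinks like $1/\log|t|$ --- to be the main obstacle; the pieces upstream of it are routine.

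Finally I would pass from $\psi$ to $\pi$. The difference $\psi(x)-\theta(x)$ is $O(\sqrt{x}\log^{2}x)$, so $\theta(x)=x+O\!\big(xe^{-a_{0}\sqrt{\log x}}\big)$ as well, and partial summation through $\pi(x)=\frac{\theta(x)}{\log x}+\int_{2}^{x}\frac{\theta(t)}{t\log^{2}t}\,dt$ turns this into $\pi(x)=\int_{2}^{x}\frac{dt}{\log t}+O\!\big(xe^{-a\sqrt{\log x}}\big)=li(x)+O\!\big(xe^{-a\sqrt{\log x}}\big)$. Since $\text{Li}(x)=li(x)-li(2)$ differs from $li(x)$ by the constant $li(2)$, which the error term absorbs, this is exactly the claimed $\pi(x)=\text{Li}(x)+O\!\big(xe^{-a\sqrt{\log x}}\big)$.
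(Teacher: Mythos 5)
The paper does not prove this theorem at all: it is quoted as a classical result of de la Vall\'ee Poussin (the paper's reference \cite{delaVallee1899}) and used as background. Your outline is precisely the standard analytic proof of that classical result --- truncated Perron formula for $\psi$, the $3+4\cos\theta+\cos 2\theta\ge 0$ zero-free region with the accompanying $\zeta'/\zeta$ bounds, contour shift picking up the pole at $s=1$, the choice $\log T=\sqrt{\log x}$, and partial summation from $\theta$ to $\pi$ --- and it is correct in outline, with the caveat that the heavy steps (the zero-free region, the Borel--Carath\'eodory estimates, and the bookkeeping along the shifted contour) are asserted rather than carried out, so as written it is a sketch of the textbook argument rather than a self-contained proof.
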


    In fact, assuming the Riemann Hypothesis it is possible to show that the absolute difference between $\pi(x)$ and $li(x)$ is $\left|\pi(x)-li(x)\right|= O\left(\sqrt{x}\log(x)\right)$, see \cite{narkiewicz2000}. Moreover, to prove the Riemann Hypothesis is equivalent to prove the following statement: $\left|\pi(x)-li(x)\right|= O\left(x^{1/2+\alpha}\right)$ for any $\alpha>0$.
    
    Function $li(x)$ has an asymptotic expansion in the form of a divergent power series that must be truncated after some finite number of terms for obtaining an approximated value of $li(x)$.
    
    \begin{lem}
        \label{lem:asympli}
        $\displaystyle li(x) \sim \frac{x}{\log(x)}\sum_{k=0}^{\infty}\frac{k!}{\log^{k}(x)}$.
    \end{lem}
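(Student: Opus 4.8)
The symbol $\sim$ here must be read as a Poincar\'e asymptotic expansion rather than in the limit sense of the footnote, since the series on the right diverges for every fixed $x$. So the precise statement I would prove is that for every fixed integer $n\ge 1$,
\[
li(x) = \frac{x}{\log(x)}\sum_{k=0}^{n-1}\frac{k!}{\log^{k}(x)} + O\!\left(\frac{x}{\log^{n+1}(x)}\right) \qquad (x\to\infty),
\]
that is, truncating after $n$ terms costs strictly less than the last term kept. The first reduction is to dispose of the principal-value singularity: since $1/\log(t)$ is singular only at $t=1$, I would write $li(x)=li(2)+\int_{2}^{x}\frac{dt}{\log(t)}$ for $x>2$, where $li(2)$ is a harmless $O(1)$ constant (negligible against $x/\log^{n+1}(x)$), leaving me to expand the ordinary integral $\int_{2}^{x}\frac{dt}{\log(t)}$.

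The engine is iterated integration by parts. Taking $u=1/\log^{k}(t)$ and $dv=dt$ gives the step identity $\int\frac{dt}{\log^{k}(t)}=\frac{t}{\log^{k}(t)}+k\int\frac{dt}{\log^{k+1}(t)}$. Applying this $n$ times starting from $k=1$, and tracking the coefficients (an easy induction shows the term with $\frac{t}{\log^{k+1}(t)}$ picks up the factor $k!$), produces the exact identity
\[
\int_{2}^{x}\frac{dt}{\log(t)} = \frac{x}{\log(x)}\sum_{k=0}^{n-1}\frac{k!}{\log^{k}(x)} - \frac{2}{\log(2)}\sum_{k=0}^{n-1}\frac{k!}{\log^{k}(2)} + n!\int_{2}^{x}\frac{dt}{\log^{n+1}(t)}.
\]
The middle sum is an $x$-independent constant, hence $O(1)$, so everything reduces to estimating the tail integral $\int_{2}^{x}\frac{dt}{\log^{n+1}(t)}$.

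For that I would split the range at $\sqrt{x}$. On $[2,\sqrt{x}]$ the integrand is at most the constant $1/\log^{n+1}(2)$, contributing $O(\sqrt{x})$; on $[\sqrt{x},x]$ we have $\log(t)\ge\frac12\log(x)$, so the integrand is at most $2^{n+1}/\log^{n+1}(x)$ and that piece is at most $2^{n+1}x/\log^{n+1}(x)$. Hence $\int_{2}^{x}\frac{dt}{\log^{n+1}(t)}=O\!\left(x/\log^{n+1}(x)\right)$. Collecting the three contributions — the constant $li(2)$, the constant lower-endpoint sum, and the tail, the first two being $O(1)$ and therefore also $O(x/\log^{n+1}(x))$ for fixed $n$ — yields the displayed formula, which is exactly the asserted asymptotic expansion.

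I do not expect a genuine obstacle: this is a classical computation. The only points that call for care are the bookkeeping of the factorial coefficients generated by the repeated integration by parts, and — more a matter of exposition than of mathematics — being explicit that $\sim$ denotes the family of finite-truncation estimates above, not an equality with a convergent sum. The single actual estimate, the $O(x/\log^{n+1}(x))$ bound on the tail integral, is handled cleanly by the split at $\sqrt{x}$.
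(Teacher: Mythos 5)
Your proposal is correct, and it is worth noting that the paper itself offers no proof of this lemma at all: it is stated as classical background (the divergent asymptotic expansion of $li(x)$, implicitly deferred to the literature on Stieltjes' truncation), so there is no in-paper argument to compare against. Your reading of $\sim$ as a Poincar\'e-type expansion --- i.e.\ the finite-truncation statement $li(x)=\frac{x}{\log(x)}\sum_{k=0}^{n-1}\frac{k!}{\log^{k}(x)}+O\!\left(\frac{x}{\log^{n+1}(x)}\right)$ for each fixed $n$ --- is the right one and is consistent with how the paper uses the lemma (truncation at a finite term with a remainder), since the series diverges for every fixed $x$ and the ratio-limit footnote cannot apply. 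Your argument is the standard and complete one: reduce to $\int_{2}^{x}\frac{dt}{\log(t)}$ up to the constant $li(2)$, iterate the integration-by-parts identity $\int\frac{dt}{\log^{k}(t)}=\frac{t}{\log^{k}(t)}+k\int\frac{dt}{\log^{k+1}(t)}$ to get the exact formula with remainder $n!\int_{2}^{x}\frac{dt}{\log^{n+1}(t)}$, and bound that remainder by splitting at $\sqrt{x}$, giving $O(\sqrt{x})+O\!\left(2^{n+1}x/\log^{n+1}(x)\right)=O\!\left(x/\log^{n+1}(x)\right)$ for fixed $n$. This would serve as a valid self-contained proof of the lemma; what it buys over the paper's bare citation is an explicit, elementary verification, at the cost of a small amount of bookkeeping that you handle correctly.
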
 
    
    Stieltjes in \cite{ASENS_1886_3_3__201_0} determined the optimal truncation point of the asymptotic approximation series (series is truncated at term minimizing $\varepsilon_n(x)$) is $n=\lfloor \log(x) \rfloor$. Also, van Boven et al. in \cite{Boven2012AsymptoticSO} introduced a fractional term that works like a 'remainder' of the asymptotic expansion series, see Definition \ref{def:li_*}.

    \begin{defn}
        \label{def:li_*}
        The \textbf{Stieltjes asymptotic approximation} of $li(x)$ is the function $\displaystyle li_{*}(x) = \frac{\alpha x n!}{\log^{n+1}(x)} + \frac{x}{\log(x)}\sum_{k=0}^{n-1}\frac{k!}{\log^{k}(x)}$, defined for all $x\ge e$, with  $\displaystyle n = \lfloor \log(x) \rfloor$ and $\alpha=\log(x)-n$.
    \end{defn}

    \begin{lem}
        \label{lem:Stieltjes}
        $\displaystyle \frac{x}{\log(x)}\sum_{k=0}^{n-1}\frac{k!}{\log^{k}(x)} \le li_{*}(x) \le \frac{x}{\log(x)}\sum_{k=0}^{n}\frac{k!}{\log^{k}(x)}$ for all $x\ge e$, with  $\displaystyle n = \lfloor \log(x) \rfloor$.
    \end{lem}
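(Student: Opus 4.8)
The plan is to reduce the claim to the elementary observation that $li_{*}(x)$ differs from the partial sum $\frac{x}{\log(x)}\sum_{k=0}^{n-1}\frac{k!}{\log^{k}(x)}$ by exactly $\alpha$ times the next term of the asymptotic series, where $\alpha=\log(x)-n$ and $n=\lfloor \log(x)\rfloor$.

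First I would unpack Definition \ref{def:li_*}: subtracting the partial sum from $li_{*}(x)$ leaves the single fractional term $\frac{\alpha x\,n!}{\log^{n+1}(x)}$, which I would rewrite as $\alpha\cdot\frac{x}{\log(x)}\cdot\frac{n!}{\log^{n}(x)}$, i.e.\ $\alpha$ times the $k=n$ summand of the series in Lemma \ref{lem:asympli}. Hence $li_{*}(x)=\frac{x}{\log(x)}\sum_{k=0}^{n-1}\frac{k!}{\log^{k}(x)}+\alpha\,\frac{x\,n!}{\log^{n+1}(x)}$, so the whole statement amounts to showing the added term lies between $0$ and the full $k=n$ term.

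Next I would pin down the range of $\alpha$ and the sign of that term. Since $n=\lfloor \log(x)\rfloor$, the defining property of the floor gives $0\le \log(x)-n<1$, so $0\le\alpha<1$. Since $x\ge e$ we have $\log(x)\ge 1>0$, hence $n\ge 1$ and $\frac{x\,n!}{\log^{n+1}(x)}>0$. Multiplying this positive quantity by $\alpha$ and using $0\le\alpha\le 1$ yields $0\le \alpha\,\frac{x\,n!}{\log^{n+1}(x)}\le \frac{x\,n!}{\log^{n+1}(x)}=\frac{x}{\log(x)}\cdot\frac{n!}{\log^{n}(x)}$. Adding the partial sum $\frac{x}{\log(x)}\sum_{k=0}^{n-1}\frac{k!}{\log^{k}(x)}$ to all three parts gives precisely the two-sided bound asserted, the right-hand side being the partial sum extended to include the $k=n$ term.

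There is essentially no real obstacle: the only point requiring care is that the step $\alpha t\le t$ needs $t\ge 0$, which is exactly why the hypothesis $x\ge e$ (forcing $\log(x)\ge 1>0$, hence positivity of every denominator and of the extra term) is invoked; everything else is direct substitution and the defining inequality of the floor function.
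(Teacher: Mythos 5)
Your proof is correct and is exactly the paper's argument, just written out in full: decompose $li_{*}(x)$ as the partial sum plus $\alpha\,\frac{x\,n!}{\log^{n+1}(x)}$ and use $0\le\alpha\le 1$ together with positivity of the term (from $x\ge e$). The paper compresses this to a one-line appeal to Definition \ref{def:li_*} and the bound on $\alpha$, so no difference in substance.
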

    \begin{proof}
        Follows from Definition \ref{def:li_*} and $0 \le \alpha=\log(x)-n \le 1$.
    \end{proof}

    The Stieltjes asymptotic approximation error is $\varepsilon(x)=li(x)-li_{*}(x)$ for all real number $x \ge 0$. In \cite{gomez2024stieltjes}, we proved that $\displaystyle |\varepsilon(x)| \le 1.265692883422\ldots$ for all $x \ge e$.

    In this paper, we bound the error of approximating $li(x)$ when using the asymptotic series if it is truncated at term $\displaystyle m=\lfloor \kappa \log(x) \rfloor$ with $0 < \kappa < e$. For doing this, we use the upper bound of $n!$ defined by Stirling's approximation formula\cite{Stirling}:
    
    \begin{equation}
        \label{eq:Stirling}
        \displaystyle \sqrt{2\pi n}\left(\frac{n}{e}\right)^ne^{\frac{1}{12n+1}}<n!<\sqrt{2\pi n}\left(\frac{n}{e}\right)^ne^{\frac{1}{12n}} \text{ for all } n\ge1.
    \end{equation}

\section{Main Results}

\subsection{Stirling's Fractional Extension}
    The following bounds can be obtained from Stirling's approximation formula (Equation \ref{eq:Stirling}):

    \begin{lem}
        \label{lem:StirlingKappa}
        $\displaystyle \frac{m!}{\log^{m+1}(x)} < \frac{C_{\kappa}}{x^{\kappa(1 - \log(\kappa))} \sqrt{\log(x)}}$ for all $0 < \kappa \le e$ and all $\displaystyle x \ge e^{\frac{1}{\kappa}}$ real numbers, with $\displaystyle m = \lfloor \kappa \log(x) \rfloor$ and $\displaystyle C_{\kappa} = \sqrt{\frac{2\pi}{\kappa}}e^{\frac{13}{12}}$.
    \end{lem}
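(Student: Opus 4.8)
The plan is to reduce the whole inequality to controlling the single quantity $\left(\frac{m}{eL}\right)^m$, where I write $L=\log(x)$, and then to squeeze it using only the two elementary bracketings $m\le\kappa L$ and $m>\kappa L-1$ for $m=\lfloor\kappa L\rfloor$. First I would observe that the hypothesis $x\ge e^{1/\kappa}$ is exactly $L\ge 1/\kappa$, i.e. $\kappa L\ge 1$, so $m\ge 1$ and Stirling's upper bound in Equation \ref{eq:Stirling} is legitimate; moreover $e^{1/(12m)}\le e^{1/12}$ for $m\ge 1$, which gives the \emph{strict} inequality $m!<\sqrt{2\pi m}\,(m/e)^m e^{1/12}$. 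Dividing by $\log^{m+1}(x)$ and extracting one factor of $L$ from the exponents yields
\[ \frac{m!}{\log^{m+1}(x)}<\frac{\sqrt{2\pi m}\,e^{1/12}}{L}\left(\frac{m}{eL}\right)^m . \]

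Next I would estimate the remaining $m$-dependent pieces separately. For the power, since $0\le m/L\le\kappa$ one gets $\left(\frac{m}{eL}\right)^m\le\left(\frac{\kappa}{e}\right)^m=e^{-m(1-\log(\kappa))}$, and this is where $\kappa\le e$ is used: it makes $1-\log(\kappa)\ge 0$, so the lower bound $m>\kappa L-1$ may be inserted to give $e^{-m(1-\log(\kappa))}\le e^{-(\kappa L-1)(1-\log(\kappa))}=\frac{e}{\kappa}\,e^{-\kappa(1-\log(\kappa))L}=\frac{e}{\kappa}\,x^{-\kappa(1-\log(\kappa))}$, using $e^{1-\log(\kappa)}=e/\kappa$ and $e^{L}=x$. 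For the $\sqrt{m}$ factor I would use $m\le\kappa L$ once more, so $\frac{\sqrt{2\pi m}}{L}\le\frac{\sqrt{2\pi\kappa L}}{L}=\sqrt{\frac{2\pi\kappa}{L}}$. Multiplying the three estimates collapses the powers of $L$ to a single $1/\sqrt{\log(x)}$ and bundles the constants into $\sqrt{2\pi\kappa}\cdot\frac{e}{\kappa}\cdot e^{1/12}=\sqrt{\frac{2\pi}{\kappa}}\,e^{13/12}=C_{\kappa}$, which is precisely the claimed bound, and the inequality stays strict because the Stirling step already was.

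The argument is essentially mechanical, so there is no deep obstacle; the care needed is purely bookkeeping. The two facts $m\le\kappa L$ and $m>\kappa L-1$ must be applied in the right spots and in opposite directions — the former to bound $(m/L)^m$ and $\sqrt{m}$ from above, the latter to convert $e^{-m(1-\log(\kappa))}$ into a genuine negative power of $x$ — and the sign condition $1-\log(\kappa)\ge 0$ must be checked before an inequality is multiplied by it, which is exactly the reason for the restriction $\kappa\le e$. The one place worth a second look is the degenerate endpoint $\kappa=e$, where the exponent $\kappa(1-\log(\kappa))$ vanishes; there one should verify that strictness is still inherited from the strict Stirling bound, which it is.
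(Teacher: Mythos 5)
Your proposal is correct and follows essentially the same route as the paper's proof: both use $m\le\kappa\log(x)$ to control $\sqrt{m}$ and $\left(\frac{m}{e\log(x)}\right)^m$, and both use the floor bound $m>\kappa\log(x)-1$ together with $\kappa\le e$ to convert $\left(\frac{\kappa}{e}\right)^m$ into $\frac{e}{\kappa}x^{-\kappa(1-\log(\kappa))}$ (the paper packages these last two steps as Lemma \ref{lem:floorceilfrac} and Corollary \ref{cor:alphax}). The only differences are cosmetic bookkeeping, such as applying $e^{1/(12m)}\le e^{1/12}$ at the start rather than the end.
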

    \begin{proof}
        Since $\displaystyle x \ge e^{\frac{1}{\kappa}}$ then $\displaystyle \kappa\log(x) \ge 1$ and $m = \lfloor \kappa \log(x) \rfloor \ge 1$. Now, $\displaystyle \frac{m!}{\log^{m+1}(x)}<\frac{\sqrt{2\pi m}\left(\frac{m}{e}\right)^me^{\frac{1}{12m}}}{\log^{m+1}(x)}$ (divide Stirling's approximation right inequality by $\log^{m+1}(x)$). Clearly, $\displaystyle \frac{m!}{\log^{m+1}(x)} < \frac{\sqrt{2\pi \kappa \log(x)} \left(\frac{\kappa\log(x)}{e}\right)^m e^{\frac{1}{12m}}}{\log^{m+1}(x)} = \frac{\sqrt{2\pi \kappa} \left(\frac{\kappa}{e}\right)^m e^{\frac{1}{12m}}}{\sqrt{\log(x)}}$ (since $\displaystyle m \le \kappa \log(x)$). So, $\displaystyle \frac{m!}{\log^{m+1}(x)} < \frac{\sqrt{2\pi \kappa} \left(\frac{e}{\kappa}\right)\left(\frac{\kappa}{e}\right)^{\kappa\log(x)} e^{\frac{1}{12m}}}{\sqrt{\log(x)}}$ (Lemma \ref{lem:floorceilfrac}), i.e., $\displaystyle \frac{m!}{\log^{m+1}(x)} < \frac{\sqrt{2\pi \kappa} \left(\frac{e}{\kappa}\right)e^{\frac{1}{12m}}}{x^{\kappa(1-\log(\kappa))}\sqrt{\log(x)}} = \frac{\sqrt{\frac{2\pi}{\kappa}}e^{1+\frac{1}{12m}}}{x^{\kappa(1 - \log(\kappa))} \sqrt{\log(x)}}$ (Corollary \ref{cor:alphax}). The inequality holds because $\displaystyle \sqrt{\frac{2\pi}{\kappa}}e^{1+\frac{1}{12m}} \le \sqrt{\frac{2\pi}{\kappa}}e^{1+\frac{1}{12}} = C_{\kappa}$ for all $m\ge1$. 
    \end{proof}

    \begin{cor}
        \label{cor:StirlingKappa}
        $\displaystyle \frac{xm!}{\log^{m+1}(x)} < C_{\kappa}\frac{x^{1 - \kappa(1 - \log(\kappa))}}{ \sqrt{\log(x)}}$ for all real numbers $0 < \kappa \le e$ and $\displaystyle x \ge e^{\frac{1}{\kappa}}$ with $\displaystyle m = \lfloor \kappa \log(x) \rfloor$ and $\displaystyle C_{\kappa} = \sqrt{\frac{2\pi}{\kappa}}e^{\frac{13}{12}}$.
    \end{cor}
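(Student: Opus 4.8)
The plan is to derive this bound directly from Lemma~\ref{lem:StirlingKappa}, of which it is a one-line consequence. The hypotheses are identical: $0 < \kappa \le e$, $x \ge e^{\frac{1}{\kappa}}$, $m = \lfloor \kappa\log(x)\rfloor$, and $C_{\kappa} = \sqrt{\frac{2\pi}{\kappa}}e^{\frac{13}{12}}$. Under these hypotheses Lemma~\ref{lem:StirlingKappa} already supplies $\frac{m!}{\log^{m+1}(x)} < \frac{C_{\kappa}}{x^{\kappa(1 - \log(\kappa))} \sqrt{\log(x)}}$, so nothing new about Stirling's formula or the floor function needs to be invoked.

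First I would multiply both sides of that inequality by $x$. Since $x \ge e^{\frac{1}{\kappa}} > 1 > 0$, multiplication by the positive quantity $x$ preserves the strict inequality, giving
\[
\frac{x\,m!}{\log^{m+1}(x)} < \frac{C_{\kappa}\,x}{x^{\kappa(1 - \log(\kappa))}\,\sqrt{\log(x)}} .
\]
Then I would simplify the right-hand side with the exponent law $\frac{x}{x^{\kappa(1-\log(\kappa))}} = x^{1 - \kappa(1-\log(\kappa))}$, valid for $x > 0$. This yields exactly the claimed inequality $\frac{x\,m!}{\log^{m+1}(x)} < C_{\kappa}\frac{x^{1 - \kappa(1 - \log(\kappa))}}{\sqrt{\log(x)}}$, completing the argument.

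There is no genuine obstacle here; the corollary merely repackages Lemma~\ref{lem:StirlingKappa} in the form that will actually be used later, where $\frac{x\,m!}{\log^{m+1}(x)}$ occurs as the fractional ``remainder'' term of the truncated asymptotic series and must be bounded against $\sqrt{x/\log(x)}$. The only point worth flagging for downstream use is the behaviour of the exponent: when $\kappa$ solves $\kappa(1-\log(\kappa)) = \omega$, the bound reads $C_{\kappa}\,x^{1-\omega}/\sqrt{\log(x)}$, and the choice $\omega = \tfrac{1}{2}$ makes this $C_{\kappa}\sqrt{x/\log(x)}$, exactly the growth order targeted in the abstract.
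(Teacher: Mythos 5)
Your proposal is correct and matches the paper's own proof, which likewise just multiplies the inequality of Lemma~\ref{lem:StirlingKappa} by $x$ and combines the powers of $x$. Nothing further is needed.
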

    \begin{proof}
        Multiply each term in inequality of Lemma \ref{lem:StirlingKappa} by $x$.
    \end{proof}

    \begin{lem}
        \label{lem:StirlingKappaL}
        $\displaystyle  \frac{B_{\kappa}}{x^{\kappa(1 - \log(\kappa))} \sqrt{\log(x)}} < \frac{m!}{\log^{m+1}(x)} $ for all real numbers $0 < \kappa \le e$ and $\displaystyle x \ge e^{\frac{1}{\kappa}}$, with $\displaystyle m = \lfloor \kappa \log(x) \rfloor$ and $\displaystyle B_{\kappa} = \sqrt{\frac{\pi}{2\kappa}}$.
    \end{lem}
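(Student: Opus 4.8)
The plan is to mirror the proof of Lemma~\ref{lem:StirlingKappa}, replacing the right inequality of Stirling's formula (Equation~\ref{eq:Stirling}) by the left one and reversing every subsequent estimate. As there, $x \ge e^{1/\kappa}$ gives $\kappa\log(x)\ge 1$ and $m=\lfloor\kappa\log(x)\rfloor\ge 1$, so dividing Stirling's left inequality by $\log^{m+1}(x)$ gives
\[
\frac{m!}{\log^{m+1}(x)} > \frac{\sqrt{2\pi m}\left(\frac{m}{e}\right)^m e^{\frac{1}{12m+1}}}{\log^{m+1}(x)} > \frac{\sqrt{2\pi m}}{\log(x)}\left(\frac{m}{e\log(x)}\right)^{m},
\]
the last step discarding $e^{\frac{1}{12m+1}}>1$.

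The core estimate is a lower bound for $\left(\frac{m}{e\log(x)}\right)^m$. Writing $\delta=\kappa\log(x)-m\in[0,1)$ we have $\frac{m}{e\log(x)}=\frac{\kappa}{e}\left(1-\frac{\delta}{\kappa\log(x)}\right)$, hence
\[
\left(\frac{m}{e\log(x)}\right)^{m}=\left(\frac{\kappa}{e}\right)^{m}\left(1-\frac{\delta}{\kappa\log(x)}\right)^{m}=x^{-\kappa(1-\log(\kappa))}\left(\frac{e}{\kappa}\right)^{\delta}\left(1-\frac{\delta}{\kappa\log(x)}\right)^{m},
\]
using the exponent identity $\left(\frac{\kappa}{e}\right)^{m}=\left(\frac{\kappa}{e}\right)^{\kappa\log(x)}\left(\frac{e}{\kappa}\right)^{\delta}$ together with Corollary~\ref{cor:alphax} (the place where the upper-bound proof invoked Lemma~\ref{lem:floorceilfrac}, now used in the opposite direction). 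Since $1-\frac{\delta}{\kappa\log(x)}=\frac{m}{\kappa\log(x)}\in(0,1]$, the elementary inequality $\log(1-u)\ge -\frac{u}{1-u}$ applied with $u=\frac{\delta}{\kappa\log(x)}$, so that $\frac{u}{1-u}=\frac{\delta}{m}$, yields $\left(1-\frac{\delta}{\kappa\log(x)}\right)^{m}\ge e^{-\delta}$; hence $\left(\frac{m}{e\log(x)}\right)^{m}\ge \kappa^{-\delta}\,x^{-\kappa(1-\log(\kappa))}$ and therefore
\[
\frac{m!}{\log^{m+1}(x)} > \frac{\sqrt{2\pi m}\;\kappa^{-\delta}}{\log(x)}\,x^{-\kappa(1-\log(\kappa))}.
\]

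It then remains to verify $\frac{\sqrt{2\pi m}\;\kappa^{-\delta}}{\log(x)}\ge\frac{B_\kappa}{\sqrt{\log(x)}}$, equivalently $4\kappa\,m\,\kappa^{-2\delta}\ge\log(x)$ with $m=\kappa\log(x)-\delta$, and this is the step I expect to be the main obstacle. The crude bound $m\ge1$ that sufficed for $\sqrt m$ in the upper-bound proof is too weak here; one must use that $m$ lies within $1$ of $\kappa\log(x)$ together with the surplus in $\kappa^{-\delta}$ for $\kappa<1$ — or, more sharply, keep the factor $\left(1+\frac{\delta}{m}\right)^{-m}$ instead of bounding it below by $e^{-\delta}$, since the gain there is largest precisely when $\delta$ is near $1$ and $m$ is small. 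A case split into $\kappa\le1$ and $\kappa\ge1$ looks unavoidable, and in the extreme $\delta\to0$ (that is, $\kappa\log(x)$ integral, where $m\approx\kappa\log(x)$) the inequality collapses to $4\kappa^2\ge1$, which is the binding comparison behind the choice $B_\kappa=\sqrt{\pi/(2\kappa)}$; pushing this through uniformly in the stated ranges of $\kappa$ and $x$ is where the careful bookkeeping sits. Once that final inequality is settled the lemma follows immediately.
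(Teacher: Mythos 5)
Your derivation up to the display $\frac{m!}{\log^{m+1}(x)} > \frac{\sqrt{2\pi m}\,\kappa^{-\delta}}{\log(x)}\,x^{-\kappa(1-\log(\kappa))}$ is correct, and the step you flag as the main obstacle is indeed where the argument dies --- but not for bookkeeping reasons: it is impossible, because the lemma as stated is false for small $\kappa$. At $\delta=0$ (i.e.\ $x=e^{m/\kappa}$, an admissible value of $x$) your required inequality $4\kappa^{2}m\,\kappa^{-2\delta}\ge m+\delta$ collapses, as you note, to $4\kappa^{2}\ge 1$, and retaining $\left(1+\frac{\delta}{m}\right)^{-m}$ instead of $e^{-\delta}$ buys nothing there since both factors equal $1$. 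Concretely, take $\kappa=0.2$ and $x=e^{10}$ (so $m=2$, and $x\ge e^{1/\kappa}$): the left side is $2!/10^{3}=0.002$, while $B_{\kappa}\,x^{-\kappa(1-\log(\kappa))}/\sqrt{\log(x)}\approx 2.80/(184.7\cdot 3.16)\approx 0.0048$, so the stated inequality fails. In general, at $\delta=0$ the exact ratio of the two sides is $2\kappa$ times a Stirling correction $e^{\theta_m}$ with $\theta_m<\frac{1}{12m}$, so the statement fails for every $\kappa$ below roughly $1/2$ --- including $\underline{\kappa}\approx 0.187$ used later in the paper --- while for $\kappa\ge\frac12$ your route does close (e.g.\ $\kappa\ge\frac12$ gives $4\kappa^{2-2\delta}\ge 4^{\delta}\ge 1+\delta\ge 1+\delta/m$).

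For comparison, the paper's own proof follows essentially your route but slips exactly where you predicted trouble: at the invocation of Lemma \ref{lem:floorceilfrac} it replaces $(\kappa/e)^{m}$ inside a lower bound by the \emph{larger} quantity $\left(\frac{e}{\kappa}\right)\left(\frac{\kappa}{e}\right)^{\kappa\log(x)}$; since $m\ge\kappa\log(x)-1$ and $\kappa/e\le 1$, the true inequality is $(\kappa/e)^{m}\le\left(\frac{e}{\kappa}\right)\left(\frac{\kappa}{e}\right)^{\kappa\log(x)}$, i.e.\ the upper-bound half of that lemma is used where a lower bound is needed, and the spurious factor $e/\kappa$ is precisely what inflates the constant to $B_{\kappa}=\sqrt{\pi/(2\kappa)}$. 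The legitimate substitution is $(\kappa/e)^{m}\ge(\kappa/e)^{\kappa\log(x)}=x^{-\kappa(1-\log(\kappa))}$ (Corollary \ref{cor:alphax}), which through the paper's chain yields the constant $\frac{\sqrt{2\pi\kappa}}{2e}$, and through your display together with $m\ge\frac{\kappa\log(x)}{2}$ and $\kappa^{-\delta}\ge\min(1,\kappa^{-1})$ yields $\sqrt{\pi\kappa}\,\min(1,\kappa^{-1})$. Either corrected constant suffices for the $\Theta$-statement of Theorem \ref{thm:StirlingKappa}, so the right move is to weaken $B_{\kappa}$ accordingly rather than to try to force your final inequality through for $\kappa<\frac12$.
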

    \begin{proof}
        $\displaystyle \kappa\log(x) \ge 1$ and $m = \lfloor \kappa \log(x) \rfloor \ge 1$ ($\displaystyle x \ge e^{\frac{1}{\kappa}}$). Now, $\displaystyle \frac{m!}{\log^{m+1}(x)} > \frac{\sqrt{2\pi m}\left(\frac{m}{e}\right)^me^{\frac{1}{12m+1}}}{\log^{m+1}(x)}$ (divide Stirling's approximation left inequality by $\log^{m+1}(x)$). Clearly, $\displaystyle \frac{m!}{\log^{m+1}(x)} > \frac{\sqrt{2\pi } m^{m+1} \kappa \left(\frac{\kappa}{e}\right)^m e^{\frac{1}{12m+1}}}{\sqrt{m}\left(\kappa\log(x)\right)^{m+1}} > \frac{\sqrt{2\pi } m^{m+1} \kappa \left(\frac{\kappa}{e}\right)^m e^{\frac{1}{12m+1}}}{\sqrt{m}(m+1)^{m+1}} = \frac{\sqrt{2\pi } \kappa \left(\frac{\kappa}{e}\right)^m e^{\frac{1}{12m+1}}}{\sqrt{m}} \left(\frac{m}{m+1}\right)^{m+1} \ge \frac{\sqrt{2\pi \kappa} \left(\frac{\kappa}{e}\right)^m e^{\frac{1}{12m+1}-1}}{2\sqrt{\log(x)}}$ (since $\displaystyle m \le \kappa \log(x)$, $\displaystyle \left(\frac{m}{m+1}\right)^{m} \ge \frac{1}{e}$, and $\displaystyle \frac{m}{m+1} \ge \frac{1}{2}$). So $\displaystyle \frac{m!}{\log^{m+1}(x)} > \frac{\sqrt{2\pi \kappa} \left(\frac{e}{\kappa}\right)\left(\frac{\kappa}{e}\right)^{\kappa\log(x)} e^{\frac{1}{12m+1}-1}}{2\sqrt{\log(x)}}$ (Lemma \ref{lem:floorceilfrac}), i.e., $\displaystyle \frac{m!}{\log^{m+1}(x)} >  \frac{\sqrt{\frac{\pi}{2\kappa}}e^{\frac{1}{12m+1}}}{x^{\kappa(1 - \log(\kappa))} \sqrt{\log(x)}}$ (Corollary \ref{cor:alphax}). The inequality holds because $\displaystyle \sqrt{\frac{\pi}{2\kappa}}e^{\frac{1}{12m+1}} \ge \sqrt{\frac{\pi}{2\kappa}} = B_{\kappa}$ for all $m\ge1$. 
    \end{proof}

    \begin{cor}
        \label{cor:StirlingKappaL}
        $\displaystyle  B_{\kappa}\frac{x^{1 - \kappa(1 - \log(\kappa))}}{ \sqrt{\log(x)}} < \frac{xm!}{\log^{m+1}(x)}$ for all real numbers $0 < \kappa \le e$ and $\displaystyle x \ge e^{\frac{1}{\kappa}}$ with $\displaystyle m = \lfloor \kappa \log(x) \rfloor$ and $\displaystyle B_{\kappa} = \sqrt{\frac{\pi}{2\kappa}}$.
    \end{cor}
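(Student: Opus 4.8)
The plan is to obtain this inequality directly from Lemma \ref{lem:StirlingKappaL} by scaling, exactly as Corollary \ref{cor:StirlingKappa} is obtained from Lemma \ref{lem:StirlingKappa}. Lemma \ref{lem:StirlingKappaL} already establishes $\displaystyle \frac{B_{\kappa}}{x^{\kappa(1-\log(\kappa))}\sqrt{\log(x)}} < \frac{m!}{\log^{m+1}(x)}$ under exactly the hypotheses stated in the corollary, namely $0 < \kappa \le e$, $x \ge e^{1/\kappa}$, $m = \lfloor \kappa\log(x)\rfloor$, and $B_{\kappa} = \sqrt{\pi/(2\kappa)}$. The first step is simply to multiply both sides of that inequality by $x$.

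The only point requiring a word of justification is that this multiplication preserves the direction of the inequality. Since the hypothesis gives $x \ge e^{1/\kappa} > 0$, the factor $x$ is strictly positive, so the strict inequality is preserved. The second (purely algebraic) step is to simplify the left-hand side using $x \cdot x^{-\kappa(1-\log(\kappa))} = x^{1-\kappa(1-\log(\kappa))}$, which turns $\displaystyle x\cdot\frac{B_{\kappa}}{x^{\kappa(1-\log(\kappa))}\sqrt{\log(x)}}$ into $\displaystyle B_{\kappa}\frac{x^{1-\kappa(1-\log(\kappa))}}{\sqrt{\log(x)}}$, while the right-hand side becomes $\displaystyle \frac{xm!}{\log^{m+1}(x)}$, which is precisely the claimed bound.

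There is essentially no obstacle here: the corollary is the multiplicative companion of Lemma \ref{lem:StirlingKappaL}, and the entire content of the proof is the observation that $x > 0$ on the domain in question, so no constant, exponent, or hypothesis changes.
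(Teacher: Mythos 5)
Your proposal is correct and matches the paper's proof exactly: the corollary is obtained by multiplying the inequality of Lemma \ref{lem:StirlingKappaL} by $x > 0$ and simplifying the exponent on the left-hand side. Nothing further is needed.
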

    \begin{proof}
        Multiply each term in inequality of Lemma \ref{lem:StirlingKappaL} by $x$.
    \end{proof}

    \begin{thm}
        \label{thm:StirlingKappa}
        $\displaystyle  \frac{xm!}{\log^{m+1}(x)} = \Theta \left( \frac{x^{1 - \kappa(1 - \log(\kappa))}}{ \sqrt{\log(x)}} \right)$ for all real numbers $0 < \kappa \le e$ and $\displaystyle x \ge e^{\frac{1}{\kappa}}$ with $\displaystyle m = \lfloor \kappa \log(x) \rfloor$.
    \end{thm}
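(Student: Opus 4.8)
The plan is to read off the statement directly from Corollaries \ref{cor:StirlingKappa} and \ref{cor:StirlingKappaL}, which together pin the quantity $\frac{xm!}{\log^{m+1}(x)}$ between two fixed positive multiples of $\frac{x^{1-\kappa(1-\log(\kappa))}}{\sqrt{\log(x)}}$. Recall that for a fixed parameter $\kappa$, writing $f(x) = \Theta(g(x))$ on the domain $x \ge e^{1/\kappa}$ means there are positive constants $c_1, c_2$ (allowed to depend on $\kappa$ but not on $x$) with $c_1 g(x) \le f(x) \le c_2 g(x)$ for all $x$ in that domain.

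First I would invoke Corollary \ref{cor:StirlingKappa}, which gives $\frac{xm!}{\log^{m+1}(x)} < C_{\kappa}\,\frac{x^{1-\kappa(1-\log(\kappa))}}{\sqrt{\log(x)}}$ for all $0 < \kappa \le e$ and $x \ge e^{1/\kappa}$, with $C_{\kappa} = \sqrt{\tfrac{2\pi}{\kappa}}e^{13/12}$; this is precisely the upper ($O$) half of the $\Theta$ claim with implied constant $C_{\kappa}$. Next I would invoke Corollary \ref{cor:StirlingKappaL}, which gives $B_{\kappa}\,\frac{x^{1-\kappa(1-\log(\kappa))}}{\sqrt{\log(x)}} < \frac{xm!}{\log^{m+1}(x)}$ on the same domain, with $B_{\kappa} = \sqrt{\tfrac{\pi}{2\kappa}}$; this is the lower ($\Omega$) half with implied constant $B_{\kappa}$. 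Combining the two inequalities yields $B_{\kappa}\,g(x) < \frac{xm!}{\log^{m+1}(x)} < C_{\kappa}\,g(x)$ where $g(x) = \frac{x^{1-\kappa(1-\log(\kappa))}}{\sqrt{\log(x)}}$, which is exactly the assertion $\frac{xm!}{\log^{m+1}(x)} = \Theta(g(x))$.

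There is essentially no obstacle here: all the substantive work — the application of Stirling's formula \eqref{eq:Stirling} and the floor/ceiling manipulations packaged in Lemma \ref{lem:floorceilfrac} and Corollary \ref{cor:alphax} — has already been carried out in the proofs of Lemmas \ref{lem:StirlingKappa} and \ref{lem:StirlingKappaL} and their corollaries. The only point worth flagging is that the implied $\Theta$-constants $B_{\kappa}$ and $C_{\kappa}$ depend on $\kappa$; this is harmless because $\kappa$ (equivalently $\omega$) is a fixed parameter throughout, and the constants are uniform in $x$ on the stated range $x \ge e^{1/\kappa}$, which is all that is required. One could optionally remark that both constants are finite and strictly positive for every $\kappa \in (0,e]$, so the $\Theta$ bound is non-degenerate across the whole parameter range.
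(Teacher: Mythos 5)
Your proposal is correct and matches the paper's own proof, which likewise deduces the $\Theta$ bound directly by combining Corollaries \ref{cor:StirlingKappa} and \ref{cor:StirlingKappaL}; you merely spell out the constants $B_{\kappa}$ and $C_{\kappa}$ and their dependence on $\kappa$ explicitly. No gaps.
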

    \begin{proof}
        Follows from Corollaries \ref{cor:StirlingKappa} and \ref{cor:StirlingKappaL}.
    \end{proof}

   We consider Theorem \ref{cor:StirlingKappa} for values of $\kappa$ that satisfy $\displaystyle \kappa(1-\log(\kappa)) = \omega$ for some real value $0 < \omega < 1$, see Figure \ref{fig:kappa(1-ln(kappa))}.

    \begin{figure}[htb]
        \centering
            \includegraphics[width=\linewidth,height=50mm, keepaspectratio]{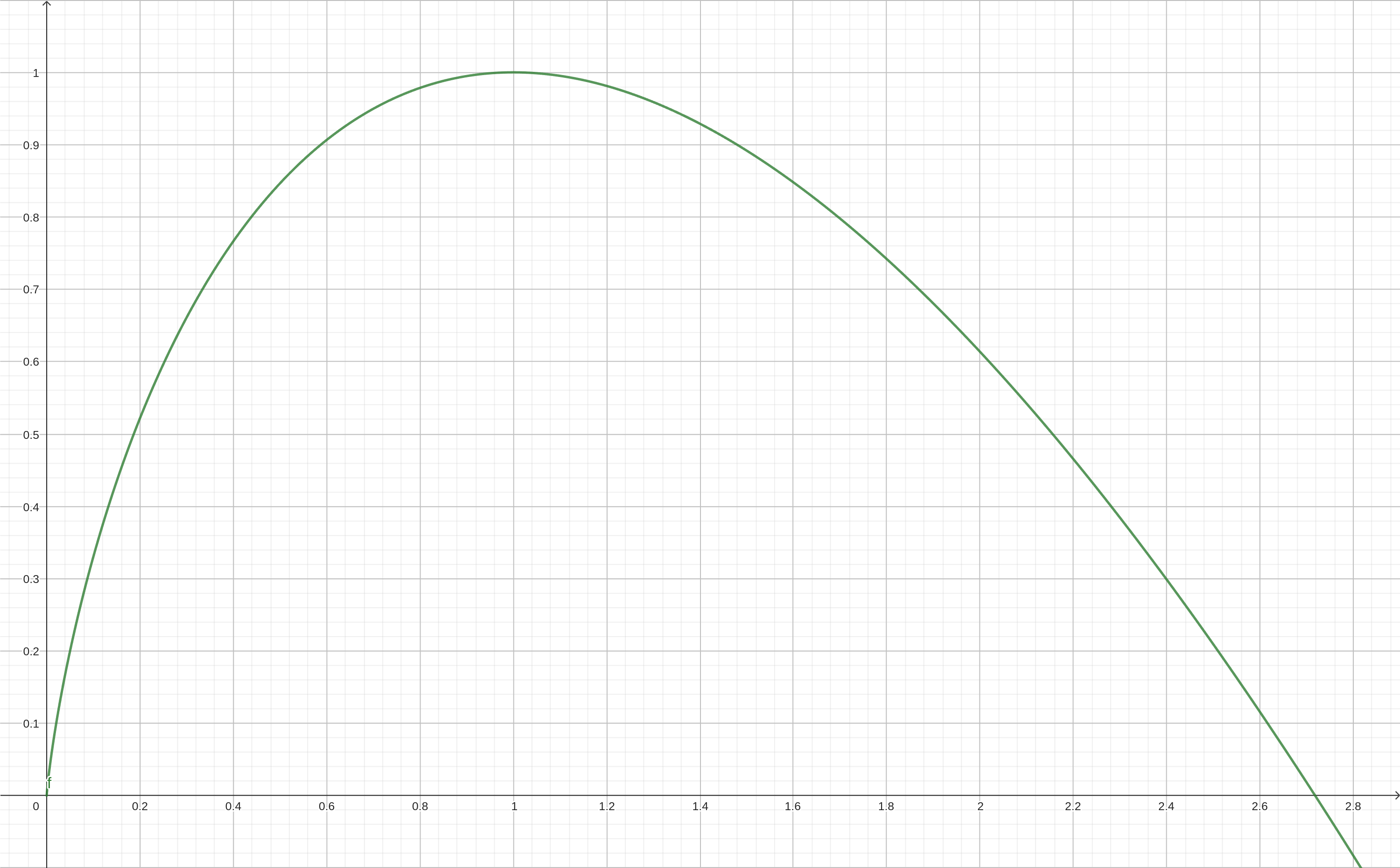}
        \caption{Behaviour of $\displaystyle \kappa(1-\log(\kappa))$ for $0 \le \kappa \le e$. }
        \label{fig:kappa(1-ln(kappa))}
    \end{figure}

\subsection{$\omega\alpha$-Asymptotic Approximations of $li_{*}(x)$}
    \begin{defn}
        \label{def:notation}
        Let $0 < \omega < 1$ and $x \ge e$ be real numbers.
        \begin{enumerate}
            \item $0 < \underline{\kappa} < 1$ is the real number such that $ \omega = \underline{\kappa}(1-\log(\underline{\kappa}))$,
            \item  $\underline{m} = \lfloor \underline{\kappa} \log(x) \rfloor$,
            \item  $\underline{\alpha} = \underline{\kappa} \log(x) - \underline{m}$,
            \item $1 < \overline{\kappa} < e$ is the real number such that $ \omega = \overline{\kappa}(1-\log(\overline{\kappa}))$,
            \item  $\overline{m} = \lfloor \overline{\kappa} \log(x) \rfloor$,
            \item  $\overline{\alpha} = \overline{\kappa} \log(x) - \overline{m}$,
        \end{enumerate}
    \end{defn}
    
    \begin{defn}
        \label{def:li_-}
        Let $0 < \omega < 1$ be a real number, $\alpha \in \{ 0, \underline{\alpha} \}$, and $\underline{m}$ and $\underline{\alpha}$ as defined in \ref{def:notation}. The $\underline{\omega}\alpha$-\textbf{asymptotic approximation} of $li(x)$ defined for all $x\ge e$ is the function $\displaystyle li_{\underline{\omega},\alpha}(x) = \frac{x}{\log(x)}\left( \alpha\frac{\underline{m}!}{\log^{\underline{m}}(x)} + \sum_{k=0}^{\underline{m}-1}\frac{k!}{\log^{k}(x)} \right)$.  
    \end{defn}

    \begin{lem}
        \label{lem:bounds_-}
        $\displaystyle li_{\underline{\omega},\alpha}(x) < li_{*}(x)$ for all $x \ge e$. 
    \end{lem}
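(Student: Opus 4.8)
The plan is to exhibit both $li_{*}(x)$ and $li_{\underline{\omega},\alpha}(x)$ as values of a single ``fractional truncation'' function and then use its monotonicity. For a real number $t\ge 0$, let $\displaystyle L(t)=\frac{x}{\log(x)}\left((t-\lfloor t\rfloor)\frac{\lfloor t\rfloor!}{\log^{\lfloor t\rfloor}(x)}+\sum_{k=0}^{\lfloor t\rfloor-1}\frac{k!}{\log^{k}(x)}\right)$. Reading off Definition \ref{def:li_*} termwise gives $li_{*}(x)=L(\log(x))$, since $\lfloor\log(x)\rfloor=n$ and $\log(x)-n$ is exactly the $\alpha$ appearing there. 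Reading off Definitions \ref{def:notation} and \ref{def:li_-} gives $li_{\underline{\omega},\underline{\alpha}}(x)=L(\underline{\kappa}\log(x))$, since $\lfloor\underline{\kappa}\log(x)\rfloor=\underline{m}$ and $\underline{\kappa}\log(x)-\underline{m}=\underline{\alpha}$; and since $li_{\underline{\omega},\alpha}(x)=\frac{x}{\log(x)}\sum_{k=0}^{\underline{m}-1}\frac{k!}{\log^{k}(x)}+\alpha\frac{x\,\underline{m}!}{\log^{\underline{m}+1}(x)}$ is increasing in $\alpha$ with positive coefficient and $0\le\alpha\le\underline{\alpha}$ for both admissible choices $\alpha\in\{0,\underline{\alpha}\}$, we always have $li_{\underline{\omega},\alpha}(x)\le li_{\underline{\omega},\underline{\alpha}}(x)=L(\underline{\kappa}\log(x))$.

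Next I would show that $L$ is strictly increasing on $[0,\infty)$. On each interval $[j,j+1]$ with $j$ a nonnegative integer, $L$ restricts to an affine function of slope $\frac{x\,j!}{\log^{j+1}(x)}>0$ (recall $x\ge e$, so $\log(x)\ge 1>0$), and $L$ is continuous at every integer because the fractional term vanishes there; hence $L$ is strictly increasing. Combining this with the previous paragraph, and with $0<\underline{\kappa}<1$ from Definition \ref{def:notation} (so that $\underline{\kappa}\log(x)<\log(x)$, using $\log(x)>0$), we conclude $li_{\underline{\omega},\alpha}(x)\le L(\underline{\kappa}\log(x))<L(\log(x))=li_{*}(x)$, which is the assertion.

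I do not expect a genuine obstacle; the only points that need care are (i) matching the two definitions to $L$ termwise, in particular confirming $\underline{m}=\lfloor\underline{\kappa}\log(x)\rfloor\le n=\lfloor\log(x)\rfloor$, which follows from $\underline{\kappa}<1$; and (ii) keeping the final inequality strict, which comes for free from the strict monotonicity of $L$ together with $\underline{\kappa}\log(x)<\log(x)$. A more elementary alternative avoids $L$ via two cases. If $\underline{m}\le n-1$, then using $\alpha<1$ and positivity of the omitted terms, $li_{\underline{\omega},\alpha}(x)<\frac{x}{\log(x)}\sum_{k=0}^{\underline{m}}\frac{k!}{\log^{k}(x)}\le\frac{x}{\log(x)}\sum_{k=0}^{n-1}\frac{k!}{\log^{k}(x)}\le li_{*}(x)$ by Lemma \ref{lem:Stieltjes}. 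If $\underline{m}=n$, then $\log(x)$ cannot be an integer (otherwise $\underline{\kappa}\log(x)<\log(x)=n$ would force $\underline{m}\le n-1$), so $\log(x)-n>0$; since $\alpha\le\underline{\alpha}=\underline{\kappa}\log(x)-n<\log(x)-n$, we get $li_{\underline{\omega},\alpha}(x)=\frac{x}{\log(x)}\sum_{k=0}^{n-1}\frac{k!}{\log^{k}(x)}+\alpha\frac{x\,n!}{\log^{n+1}(x)}<\frac{x}{\log(x)}\sum_{k=0}^{n-1}\frac{k!}{\log^{k}(x)}+(\log(x)-n)\frac{x\,n!}{\log^{n+1}(x)}=li_{*}(x)$.
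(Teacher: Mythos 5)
Your proposal is correct, and your primary argument is genuinely different from the paper's. The paper proves Lemma \ref{lem:bounds_-} by a two-case analysis on whether $\underline{m}=n$ or $\underline{m}<n$: in the first case it computes $li_{*}(x)-li_{\underline{\omega},\alpha}(x)=\frac{xn!(\log(x)-n-\alpha)}{\log^{n+1}(x)}$ and bounds it below by $\frac{xn!(1-\underline{\kappa})\log(x)}{\log^{n+1}(x)}>0$ using $\alpha\le\underline{\alpha}$, and in the second case it uses $\alpha<1$ together with Lemma \ref{lem:Stieltjes}; your ``more elementary alternative'' at the end is essentially this exact argument (your observation that $\underline{m}=n$ forces $\log(x)\notin\mathbb{Z}$ is a slightly different bookkeeping of the same strictness, and both hinge on $\underline{\kappa}<1$). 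Your main route instead packages both $li_{*}(x)=L(\log(x))$ and $li_{\underline{\omega},\underline{\alpha}}(x)=L(\underline{\kappa}\log(x))$ as values of the continuous, piecewise-affine interpolant $L(t)$ with positive slopes $\frac{x\lfloor t\rfloor!}{\log^{\lfloor t\rfloor+1}(x)}$, reduces the general $\alpha\in\{0,\underline{\alpha}\}$ to the extreme case by monotonicity in $\alpha$, and concludes from strict monotonicity of $L$ and $\underline{\kappa}\log(x)<\log(x)$. The identifications with Definitions \ref{def:li_*}, \ref{def:notation}, \ref{def:li_-} check out (including the empty-sum case $\underline{m}=0$ and continuity of $L$ at integers), so the argument is sound; what it buys is a uniform treatment that eliminates the case split entirely and, as a bonus, the same monotonicity of $L$ with $\overline{\kappa}>1$ immediately yields the companion bound of Lemma \ref{lem:bounds_+}, whereas the paper repeats a symmetric case analysis there. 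One small point in your favor: the paper's parenthetical ``$\underline{\alpha}=\underline{\kappa}\log(x)$'' in its proof is a typo for $\underline{\alpha}=\underline{\kappa}\log(x)-\underline{m}$ (Definition \ref{def:notation}), and your reading is the consistent one.
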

    \begin{proof}
        We have two cases: (1) If $\displaystyle \lfloor \underline{\kappa} \log(x) \rfloor = \underline{m} = n = \lfloor \log(x) \rfloor$ then $\displaystyle li_{*}(x) - li_{\underline{\omega},\alpha}(x) = \frac{x}{\log(x)}\left[\left(\frac{(\log(x)-n) n!}{\log^{n}(x)} + \sum_{k=0}^{n-1}\frac{k!}{\log^{k}(x)} \right) - \left(\frac{\alpha n!}{\log^{n}(x)} + \sum_{k=0}^{n-1}\frac{k!}{\log^{k}(x)} \right)\right] = \frac{x n! \left(\log(x)-n-\alpha\right)}{\log^{n+1}(x)} \ge \frac{x n! \left(\log(x)-n-(\underline{\kappa}\log(x)-\underline{m})\right)}{\log^{n+1}(x)} = \frac{x n! \left(1-\underline{\kappa}\right)\log(x)}{\log^{n+1}(x)} > 0$ (since $\alpha \le \underline{\alpha}= \underline{\kappa}\log(x)$). (2) If $\displaystyle \underline{m} < n$ then  $\displaystyle li_{\underline{\omega},\alpha}(x) < \frac{x}{\log(x)} \sum_{k=0}^{\underline{m}}\frac{k!}{\log^{k}(x)}$ (since $\alpha < 1$). Therefore, $\displaystyle li_{\underline{\omega},\alpha}(x) < \frac{x}{\log(x)} \sum_{k=0}^{n-1}\frac{k!}{\log^{k}(x)}$ (since $\underline{m}<n$), i.e., $\displaystyle li_{\underline{\omega},\alpha}(x) < li_{*}(x)$ (Lemma \ref{lem:Stieltjes}).
    \end{proof}

    \begin{defn}
        \label{def:li_+}
        Let $0 < \omega < 1$ be a real number, $\alpha \in \{ \overline{\alpha}, 1 \}$, and $\overline{m}$ and $\overline{\alpha}$ as defined in \ref{def:notation}. The $\overline{\omega}\alpha$-\textbf{asymptotic approximation} of $li(x)$ defined for all $x\ge e$ is the function $\displaystyle li_{\overline{\omega},\alpha}=\frac{x}{\log(x)}\left( \alpha\frac{\overline{m}!}{\log^{\overline{m}}(x)} + \sum_{k=0}^{\overline{m}-1}\frac{k!}{\log^{k}(x)} \right)$. 
    \end{defn}  

    \begin{lem}
        \label{lem:bounds_+}
        $\displaystyle li_{*}(x) \le li_{\overline{\omega},\alpha}(x)$ for all $x\ge e$.    
    \end{lem}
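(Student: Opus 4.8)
The plan is to mirror the proof of Lemma~\ref{lem:bounds_-}, with every inequality reversed, exploiting that $\overline{\kappa} > 1$ forces $\overline{m} = \lfloor \overline{\kappa}\log(x)\rfloor \ge n = \lfloor \log(x)\rfloor$ (rather than $\underline{m} \le n$ as in the lower case). So first I would record this observation: for $x \ge e$ we have $\log(x) \ge 1 > 0$, hence $\overline{\kappa}\log(x) > \log(x) \ge n$, and since $n$ is an integer, $\overline{m} \ge n$. The argument then splits into the same two cases used for Lemma~\ref{lem:bounds_-}.

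Case $\overline{m} = n$. Here I would subtract the closed forms of Definitions~\ref{def:li_+} and~\ref{def:li_*} directly; the common partial sums $\sum_{k=0}^{n-1} k!/\log^{k}(x)$ cancel, leaving
\[
li_{\overline{\omega},\alpha}(x) - li_{*}(x) = \frac{x\,n!\,\bigl(\alpha - (\log(x)-n)\bigr)}{\log^{n+1}(x)}.
\]
It then remains to see $\alpha \ge \log(x) - n$. If $\alpha = 1$, this is immediate from $\log(x) - n < 1$. If $\alpha = \overline{\alpha} = \overline{\kappa}\log(x) - \overline{m} = \overline{\kappa}\log(x) - n$, then $\alpha - (\log(x)-n) = (\overline{\kappa}-1)\log(x) > 0$ because $\overline{\kappa} > 1$. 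In either case the displayed difference is nonnegative, i.e. $li_{*}(x) \le li_{\overline{\omega},\alpha}(x)$.

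Case $\overline{m} \ge n+1$. Here I would discard the nonnegative $\alpha$-term to obtain $li_{\overline{\omega},\alpha}(x) \ge \frac{x}{\log(x)} \sum_{k=0}^{\overline{m}-1} k!/\log^{k}(x)$, and then, since $\overline{m}-1 \ge n$ and every summand is positive, $\sum_{k=0}^{\overline{m}-1} k!/\log^{k}(x) \ge \sum_{k=0}^{n} k!/\log^{k}(x)$. Hence $li_{\overline{\omega},\alpha}(x) \ge \frac{x}{\log(x)}\sum_{k=0}^{n} k!/\log^{k}(x)$, and Lemma~\ref{lem:Stieltjes} gives $li_{*}(x) \le \frac{x}{\log(x)}\sum_{k=0}^{n} k!/\log^{k}(x)$, so $li_{*}(x) \le li_{\overline{\omega},\alpha}(x)$.

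I do not expect a genuine obstacle here: the only points requiring care are the direction of the floor inequality $\overline{m} \ge n$ (this is exactly where the hypothesis $\overline{\kappa} > 1$ enters) and checking the sign of $\alpha - (\log(x)-n)$ for both admissible values $\alpha \in \{\overline{\alpha}, 1\}$; everything else is bookkeeping structurally identical to the proof of Lemma~\ref{lem:bounds_-}.
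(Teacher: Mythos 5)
Your proof is correct and follows essentially the same two-case argument as the paper: when $\overline{m}=n$ you cancel the common partial sums and check the sign of $\alpha-(\log(x)-n)$ for both admissible values of $\alpha$, and when $\overline{m}>n$ you drop the $\alpha$-term and compare with $li_{*}(x)$ via Lemma~\ref{lem:Stieltjes}. The only (welcome) addition is your explicit observation that $\overline{\kappa}>1$ forces $\overline{m}\ge n$, which makes the case split exhaustive — a point the paper leaves implicit.
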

    \begin{proof}
        We have two cases: (1) If $\displaystyle \lfloor \overline{\kappa} \log(x) \rfloor = \overline{m} = n = \lfloor \log(x) \rfloor$ then $\displaystyle li_{\overline{\omega},\alpha}(x) - li_{*}(x) = \frac{x}{\log(x)}\left[\left(\frac{\alpha n!}{\log^{n}(x)} + \sum_{k=0}^{n-1}\frac{k!}{\log^{k}(x)} \right) - \left(\frac{(\log(x)-n) n!}{\log^{n}(x)} + \sum_{k=0}^{n-1}\frac{k!}{\log^{k}(x)} \right)\right] = \frac{x n! \left(\alpha - \log(x) + n\right)}{\log^{n+1}(x)} \ge \frac{x n! \left((\overline{\kappa}\log(x)-\overline{m}) - \log(x) + n \right)}{\log^{n+1}(x)} = \frac{x n! \log(x)\left(\overline{\kappa}-1\right)}{\log^{n+1}(x)} > 0$ (since $\alpha \ge \overline{\alpha}= \overline{\kappa}\log(x)$). (2) If $\displaystyle \overline{m} > n$ then  $\displaystyle li_{\overline{\omega},\alpha}(x) \ge \frac{x}{\log(x)} \sum_{k=0}^{\overline{m}-1}\frac{k!}{\log^{k}(x)}$ (removes fraction in $\displaystyle li_{\overline{\omega},\alpha}(x)$), so $\displaystyle li_{\overline{\omega},\alpha}(x) \ge \frac{x}{\log(x)} \sum_{k=0}^{n}\frac{k!}{\log^{k}(x)}$ (since $\overline{m}>n$), i.e., $\displaystyle li_{\overline{\omega},\alpha}(x) \ge li_{*}(x)$ (Lemma \ref{lem:Stieltjes}).
    \end{proof}
    
    Bounds for the error of approximating $li_{*}(x)$ using the $\omega\alpha$-asymptotic approximations can be obtained as follows.
    
    \begin{lem}
        \label{lem:g_varepsilon_-}
        $\displaystyle 0 \le \varepsilon^{*}_{\underline{\omega},\alpha}(x) = li_{*}(x) - li_{\underline{\omega},\alpha}(x) \le \left(\frac{C_{\underline{\kappa}}x^{1-\omega}}{\sqrt{\log(x)}}\right)\sum_{k=1}^{n}\frac{1}{\log^{k}(x)}\prod_{i=1}^{k} (\underline{m}+i)$ for all real number $\displaystyle x \ge e$, with $n=\lfloor \log(x) \rfloor$, $\displaystyle C_{\underline{\kappa}} = \sqrt{\frac{2\pi}{\underline{\kappa}}}e^{\frac{13}{12}}$ and $\displaystyle \underline{m}$ and $\underline{\kappa}$ as defined in \ref{def:notation}.
    \end{lem}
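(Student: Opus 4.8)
The lower bound $\varepsilon^{*}_{\underline{\omega},\alpha}(x)\ge 0$ needs nothing new: Lemma~\ref{lem:bounds_-} already gives $li_{\underline{\omega},\alpha}(x)<li_{*}(x)$. For the upper bound my plan is a ``Stieltjes sandwich'' followed by a reindexing. First, $\underline{\kappa}<1$ forces $\underline{m}=\lfloor\underline{\kappa}\log(x)\rfloor\le\lfloor\log(x)\rfloor=n$. I would bound $li_{*}(x)$ above by $\frac{x}{\log(x)}\sum_{k=0}^{n}\frac{k!}{\log^{k}(x)}$ (Lemma~\ref{lem:Stieltjes}) and $li_{\underline{\omega},\alpha}(x)$ below by $\frac{x}{\log(x)}\sum_{k=0}^{\underline{m}-1}\frac{k!}{\log^{k}(x)}$ (valid since $\alpha\ge 0$, so the discarded fractional term is nonnegative); subtracting gives
$$\varepsilon^{*}_{\underline{\omega},\alpha}(x)\le\frac{x}{\log(x)}\sum_{k=\underline{m}}^{n}\frac{k!}{\log^{k}(x)},$$
which is also valid in the edge cases $\underline{m}=n$ (single-term sum) and $\underline{m}=0$ (sum from $k=0$).

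Next I would use $k!=\underline{m}!\prod_{i=1}^{k-\underline{m}}(\underline{m}+i)$ and the substitution $j=k-\underline{m}$ to rewrite
\begin{align*}
\frac{x}{\log(x)}\sum_{k=\underline{m}}^{n}\frac{k!}{\log^{k}(x)}
&=\frac{x\,\underline{m}!}{\log^{\underline{m}+1}(x)}\sum_{j=0}^{n-\underline{m}}\frac{1}{\log^{j}(x)}\prod_{i=1}^{j}(\underline{m}+i)\\
&\le\frac{x\,\underline{m}!}{\log^{\underline{m}+1}(x)}\sum_{k=0}^{n}\frac{1}{\log^{k}(x)}\prod_{i=1}^{k}(\underline{m}+i),
\end{align*}
the last inequality just enlarging the index set (all summands nonnegative, $n-\underline{m}\le n$), the $k=0$ summand being the empty product $1$. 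Everything thus reduces to the single inequality $\frac{x\,\underline{m}!}{\log^{\underline{m}+1}(x)}\le\frac{C_{\underline{\kappa}}\,x^{1-\omega}}{\sqrt{\log(x)}}$; multiplying through by the (nonnegative) sum then finishes it.

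To get that prefactor inequality: for $x\ge e^{1/\underline{\kappa}}$ (i.e.\ $\underline{m}\ge 1$) it is precisely Corollary~\ref{cor:StirlingKappa} applied with $\kappa=\underline{\kappa}$, since then $m=\underline{m}$ and $\kappa(1-\log\kappa)=\underline{\kappa}(1-\log\underline{\kappa})=\omega$, giving $\frac{x\,\underline{m}!}{\log^{\underline{m}+1}(x)}<C_{\underline{\kappa}}\frac{x^{1-\omega}}{\sqrt{\log(x)}}$. For $e\le x<e^{1/\underline{\kappa}}$ one has $\underline{m}=0$, the left side is $\frac{x}{\log(x)}$, and the inequality becomes $\frac{x^{\omega}}{\sqrt{\log(x)}}\le C_{\underline{\kappa}}$; I would prove this by noting that $L\mapsto e^{\omega L}/\sqrt{L}$ has a single interior minimum and hence on $[1,1/\underline{\kappa}]$ is largest at an endpoint, then checking $e^{\omega}<e<C_{\underline{\kappa}}$ at $L=1$ and $e^{\omega/\underline{\kappa}}\sqrt{\underline{\kappa}}=e^{1-\log\underline{\kappa}}\sqrt{\underline{\kappa}}=e/\sqrt{\underline{\kappa}}<\sqrt{2\pi}\,e^{13/12}/\sqrt{\underline{\kappa}}=C_{\underline{\kappa}}$ at $L=1/\underline{\kappa}$ (using $\omega=\underline{\kappa}(1-\log\underline{\kappa})$ and $\sqrt{2\pi}\,e^{1/12}>1$).

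The sandwich and the reindexing are routine; the one place where I expect real care is the split at $x=e^{1/\underline{\kappa}}$, because Corollary~\ref{cor:StirlingKappa} only yields the prefactor bound for $x\ge e^{1/\underline{\kappa}}$, forcing the short range $e\le x<e^{1/\underline{\kappa}}$ (on which $\underline{m}=0$) to be handled by hand via the elementary estimate above — the sole spot where the exact form of $C_{\underline{\kappa}}$ and the relation $\omega=\underline{\kappa}(1-\log\underline{\kappa})$ actually enter. I would also note that the opening sandwich is deliberately lossy (it throws away the fractional term of $li_{\underline{\omega},\alpha}$, biggest when $\alpha=\underline{\alpha}$), which is harmless for the stated $O$-bound but explains why the resulting constant is far from optimal.
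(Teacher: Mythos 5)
You follow essentially the same route as the paper's own proof: nonnegativity from Lemma \ref{lem:bounds_-}, the sandwich $li_{*}(x)\le\frac{x}{\log(x)}\sum_{k=0}^{n}\frac{k!}{\log^{k}(x)}$ and $li_{\underline{\omega},\alpha}(x)\ge\frac{x}{\log(x)}\sum_{k=0}^{\underline{m}-1}\frac{k!}{\log^{k}(x)}$, then factoring out $\frac{x\,\underline{m}!}{\log^{\underline{m}+1}(x)}$ and applying Corollary \ref{cor:StirlingKappa}. Two discrepancies with the paper are worth recording, and both are in your favor. First, your reindexing is the correct one: the tail equals $\frac{x\,\underline{m}!}{\log^{\underline{m}+1}(x)}\sum_{j=0}^{n-\underline{m}}\frac{1}{\log^{j}(x)}\prod_{i=1}^{j}(\underline{m}+i)$, with a leading $j=0$ term equal to $1$, so what you actually prove is the bound with $\sum_{k=0}^{n}$, not the stated $\sum_{k=1}^{n}$. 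The paper's proof instead asserts an exact identity with the sum running from $k=1$ to $n$, which silently drops that leading $1$; since the dropped term corresponds to the dominant $k=\underline{m}$ term of the tail, the literal statement is in doubt: for large $x$ with $\underline{\kappa}\log(x)$ just below an integer and $\alpha=0$, Corollary \ref{cor:StirlingKappaL} gives $\varepsilon^{*}_{\underline{\omega},0}(x)$ of size roughly $(1+\tfrac{\underline{\kappa}}{1-\underline{\kappa}})\sqrt{2\pi/\underline{\kappa}}\,x^{1-\omega}/\sqrt{\log(x)}$, while the stated right-hand side is roughly $e^{13/12}\tfrac{\underline{\kappa}}{1-\underline{\kappa}}\sqrt{2\pi/\underline{\kappa}}\,x^{1-\omega}/\sqrt{\log(x)}$, which is smaller. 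So your ``weaker'' form (equivalently, a leading $1+$ inside the parenthesis) is the version that is true and is what the downstream corollaries should use, with their constants adjusted by the extra additive term; the $O\left(\sqrt{x/\log(x)}\right)$ conclusions are unaffected. Second, you correctly observe that Corollary \ref{cor:StirlingKappa} is only available for $x\ge e^{1/\underline{\kappa}}$ and you patch the range $e\le x<e^{1/\underline{\kappa}}$ (where $\underline{m}=0$ and the prefactor is $x/\log(x)$) by the elementary endpoint estimate $x^{\omega}/\sqrt{\log(x)}\le e/\sqrt{\underline{\kappa}}<C_{\underline{\kappa}}$; the paper applies the corollary for all $x\ge e$ without comment, so this closes a genuine gap in its argument. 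In short: your proof is sound and is the paper's argument done carefully, but it establishes the corrected inequality (sum starting at $k=0$), not the lemma as literally stated, and you should flag that the statement itself needs that amendment.
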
 
    \begin{proof}
        $\displaystyle 0 \le \varepsilon^{*}_{\underline{\omega},\alpha}$  (Lemma \ref{lem:bounds_-}). Now, $\displaystyle li_{*}(x) \le \frac{x}{\log(x)}\sum_{k=0}^{n}\frac{k!}{\log^{k}(x)}$ with $n=\lfloor \log(x) \rfloor$ (Lemma \ref{lem:Stieltjes}) and $\displaystyle li_{\underline{w}, \alpha}(x) \ge \frac{x}{\log(x)}\sum_{k=0}^{\underline{m}-1}\frac{k!}{\log^{k}(x)}$, with $\underline{m} = \lfloor \underline{\kappa} \log(x) \rfloor$ (Remove term $\displaystyle \alpha\frac{\underline{m}!}{\log^{\underline{m}}(x)}$). So, $\displaystyle \varepsilon^{*}_{\underline{\omega},\alpha} \le \frac{x}{\log(x)}\sum_{k=\underline{m}}^{n}\frac{k!}{\log^{k}(x)} = \frac{x\underline{m}!}{\log^{\underline{m}+1}(x)}\sum_{k=1}^{n}\frac{1}{\log^{k}(x)}\prod_{i=1}^{k} (\underline{m}+i) \le \left(\frac{C_{\underline{\kappa}}x^{1-\omega}}{\sqrt{\log(x)}}\right)\sum_{k=1}^{n}\frac{1}{\log^{k}(x)}\prod_{i=1}^{k} (\underline{m}+i)$ (Corollary \ref{cor:StirlingKappa}).
    \end{proof}

    \begin{cor}
        \label{cor:s_varepsilon_-}
        $\displaystyle 0 \le \varepsilon^{*}_{\underline{\omega},\alpha}(x) = li_{*}(x) - li_{\underline{\omega},\alpha}(x) \le S_{\underline{\omega}}x^{1-\omega}\sqrt{\log(x)}$ for all real number $\displaystyle x \ge e$ with $\displaystyle S_{\underline{\omega}} = (1-\underline{\kappa})C_{\underline{\kappa}} = \sqrt{\frac{2\pi}{\underline{\kappa}}}(1-\underline{\kappa})e^{\frac{13}{12}}$ and $\underline{\kappa}$ as defined in \ref{def:notation}.
    \end{cor}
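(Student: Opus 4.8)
The plan is to turn the bound of Lemma \ref{lem:g_varepsilon_-} into a single elementary inequality about a truncated sum. The lower bound $0\le\varepsilon^{*}_{\underline{\omega},\alpha}(x)$ is already supplied by Lemma \ref{lem:g_varepsilon_-} (equivalently by Lemma \ref{lem:bounds_-}), so only the upper bound needs work. Put $L=\log(x)$, $n=\lfloor L\rfloor$, $\underline{m}=\lfloor\underline{\kappa}L\rfloor$, and recall $\omega=\underline{\kappa}(1-\log(\underline{\kappa}))$ and $S_{\underline{\omega}}=(1-\underline{\kappa})C_{\underline{\kappa}}$. Lemma \ref{lem:g_varepsilon_-} gives
\[
\varepsilon^{*}_{\underline{\omega},\alpha}(x)\ \le\ \frac{C_{\underline{\kappa}}\,x^{1-\omega}}{\sqrt{L}}\;\Sigma(x),\qquad
\Sigma(x)=\sum_{k=1}^{n}\frac{1}{L^{k}}\prod_{i=1}^{k}(\underline{m}+i),
\]
so dividing the target inequality $\varepsilon^{*}_{\underline{\omega},\alpha}(x)\le S_{\underline{\omega}}x^{1-\omega}\sqrt{L}$ by $C_{\underline{\kappa}}x^{1-\omega}/\sqrt{L}$ reduces the corollary to the single estimate $\Sigma(x)\le(1-\underline{\kappa})L$.

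To prove this I would write the summands as $a_{k}=\prod_{i=1}^{k}\frac{\underline{m}+i}{L}$, note $a_{k+1}/a_{k}=\frac{\underline{m}+k+1}{L}$, and split $\Sigma(x)$ at $k=n-\underline{m}$. For $1\le k\le n-\underline{m}$ every factor $\frac{\underline{m}+i}{L}$ with $i\ge 2$ has $\underline{m}+i\le n\le L$, so $a_{k}\le\frac{\underline{m}+1}{L}$; combined with the floor estimates $n-\underline{m}<(1-\underline{\kappa})L+1$ and $\underline{m}+1\le\underline{\kappa}L+1$, this ``head'' block contributes at most $(n-\underline{m})\frac{\underline{m}+1}{L}\le\underline{\kappa}(1-\underline{\kappa})L+O(1)$. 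For $n-\underline{m}<k\le n$ I would instead rewrite $\frac{x\underline{m}!}{L^{\underline{m}+1}}a_{k}=\frac{x(\underline{m}+k)!}{L^{\underline{m}+k+1}}$ and apply Corollary \ref{cor:StirlingKappa} with parameter $\kappa_{k}=(\underline{m}+k)/L$; since $\kappa_{k}$ then lies strictly between $\underline{\kappa}$ and $\overline{\kappa}$, one has $\kappa_{k}(1-\log(\kappa_{k}))>\omega$, so each such term acquires a factor $x^{1-\kappa_{k}(1-\log(\kappa_{k}))}$ negligible next to $x^{1-\omega}\sqrt{L}$; using also the lower bound of Corollary \ref{cor:StirlingKappaL} for $\frac{x\underline{m}!}{L^{\underline{m}+1}}$, this ``tail'' block contributes only $o(L)$ to $\Sigma(x)$. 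Adding the two blocks and using $\underline{\kappa}(1-\underline{\kappa})<1-\underline{\kappa}$ yields $\Sigma(x)<(1-\underline{\kappa})L$ once $L$ is above an explicit constant, and the short remaining range of $x$ near $e$ (where $\underline{m}=0$ and $\Sigma(x)$ is just $\frac{1}{L}$ or a couple of explicit terms) is disposed of by a direct computation of $li_{*}(x)-li_{\underline{\omega},\alpha}(x)$.

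The main obstacle is precisely the estimate $\Sigma(x)\le(1-\underline{\kappa})L$. The summands $a_{k}$ are not monotone --- they decrease while $\underline{m}+k+1<L$ and increase afterwards, and $a_{k+1}/a_{k}$ genuinely exceeds $1$ for the largest indices --- so $\Sigma(x)$ cannot be dominated by a convergent geometric series, while the crude ``number of terms times largest term'' bound loses exactly the constant one is trying to nail down. The delicate point is to come out with the clean factor $1-\underline{\kappa}$ rather than $1-\underline{\kappa}+O(1/L)$: that means tracking, simultaneously, the essentially $\underline{\kappa}$-geometric size of the leading block, the exponential (Stirling) smallness of the trailing block, and the $O(1)$ slack from the two floors $\lfloor L\rfloor$ and $\lfloor\underline{\kappa}L\rfloor$, together with a separate hand check for $x$ near $e$.
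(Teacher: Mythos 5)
Your reduction of the corollary to the estimate $\Sigma(x)\le(1-\underline{\kappa})\log(x)$ is the right first step, but your diagnosis of the ``main obstacle'' misses the point on which the paper's proof turns. The sum that actually arises from factoring $\displaystyle \frac{x}{\log(x)}\sum_{k=\underline{m}}^{n}\frac{k!}{\log^{k}(x)}$ runs only over $k=1,\dots,n-\underline{m}$ (the upper limit $n$ printed in Lemma \ref{lem:g_varepsilon_-} is a slip in the paper; its own proof and the later use of Lemma \ref{lem:s_sumprod_n-m_1} make the intended range clear), and on that range every factor satisfies $\frac{\underline{m}+i}{\log(x)}\le\frac{n}{\log(x)}\le1$, so every summand is at most $1$ (Lemma \ref{lem:prod_n-m_1}). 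Hence the crude ``number of terms times largest term'' bound you dismissed is exactly the paper's argument: Lemma \ref{lem:s_sumprod_n-m_1} gives $\Sigma(x)\le n-\underline{m}\le(1-\underline{\kappa})\log(x)$ (up to the floor slack of $1$), valid for all $x\ge e$, with no Stirling estimates, no head/tail split, and no separate check of small $x$. The constant $1-\underline{\kappa}$ comes from the term count alone, so nothing delicate has to be ``nailed down''.

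By instead taking the upper limit to be $n$, you create a tail $n-\underline{m}<k\le n$ whose treatment has a genuine gap. Your argument needs $\kappa_{k}(1-\log(\kappa_{k}))>\omega$ for every $\kappa_{k}=(\underline{m}+k)/\log(x)$ up to $(\underline{m}+n)/\log(x)\approx 1+\underline{\kappa}$, i.e.\ it needs $1+\underline{\kappa}<\overline{\kappa}$. That is true for $\omega=\tfrac12$ but false once $\omega$ is close to $1$ (equivalently once $(1+\underline{\kappa})\log(1+\underline{\kappa})-\underline{\kappa}\log(\underline{\kappa})>1$, e.g.\ $\underline{\kappa}=0.8$): there the top term $\frac{(\underline{m}+n)!}{\underline{m}!\,\log^{n}(x)}$ grows like a positive power of $x$, the tail is not $o(\log x)$, and your target inequality $\Sigma(x)\le(1-\underline{\kappa})\log(x)$ for the sum up to $n$ is simply false — no refinement of the Stirling bounds can repair it; the only fix is to truncate the sum at $n-\underline{m}$, which collapses the problem back to the paper's two-line argument. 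In addition, even where your plan works, the conclusion is asymptotic (``once $L$ is above an explicit constant'', and Corollary \ref{cor:StirlingKappaL} itself requires $x\ge e^{1/\underline{\kappa}}$), whereas the corollary asserts the bound for all $x\ge e$; the leftover finite range is larger than ``$x$ near $e$ where $\underline{m}=0$'' and is not actually verified in your sketch.
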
 
    \begin{proof}
        $\displaystyle 0 \le \varepsilon^{*}_{\underline{\omega},\alpha} \le \left(\frac{C_{\underline{\kappa}}x^{1-\omega}}{\sqrt{\log(x)}}\right)\sum_{k=1}^{n}\frac{1}{\log^{k}(x)}\prod_{i=1}^{k} (\underline{m}+i)$ with $n = \lfloor \log(x) \rfloor$ (Lemma \ref{lem:g_varepsilon_-}). $\displaystyle \varepsilon^{*}_{\underline{\omega},\alpha} \le \frac{C_{\underline{\kappa}}x^{1-\omega}(n-\underline{m})}{\sqrt{\log(x)}}$ (Lemma \ref{lem:s_sumprod_n-m_1}). $\displaystyle \varepsilon^{*}_{\underline{\omega},\alpha} \le \frac{C_{\underline{\kappa}}x^{1-\omega}(\log(x) - \underline{\kappa}\log(x))}{\sqrt{\log(x)}} \le C_{\underline{\kappa}}(1-\underline{\kappa})x^{1-\omega}\sqrt{\log(x)} = S_{\underline{\omega}}x^{1-\omega}\sqrt{\log(x)}$.
    \end{proof}

    \begin{cor}
        \label{cor:varepsilon_-}
        $\displaystyle 0 \le \varepsilon^{*}_{\underline{\omega},\alpha}(x) = li_{*}(x) - li_{\underline{\omega},\alpha}(x) \le D_{\underline{\omega}}\frac{x^{1-\omega}}{\sqrt{\log(x)}}$ for all real number $\displaystyle x \ge e$ with $\displaystyle D_{\underline{\omega}} = \sqrt{\frac{8\pi}{\underline{\kappa}}}e^{\frac{13}{12}}\left(\frac{1+\underline{\kappa}}{1-\underline{\kappa}}\right)$ and $\underline{\kappa}$ as defined in \ref{def:notation}.
    \end{cor}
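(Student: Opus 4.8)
The plan is to recover the correct dependence $1/\sqrt{\log(x)}$ — Corollary \ref{cor:s_varepsilon_-} only delivers $\sqrt{\log(x)}$, which is lossy by a factor $\log(x)$ — by returning to the finer estimate of Lemma \ref{lem:g_varepsilon_-}. Write $\displaystyle \Sigma(x) = \sum_{k}\frac{1}{\log^{k}(x)}\prod_{i=1}^{k}(\underline{m}+i)$ for the sum appearing there (the summation effectively running over $1\le k\le n-\underline{m}$, with $n=\lfloor\log(x)\rfloor$). Lemma \ref{lem:g_varepsilon_-} already gives $\displaystyle 0\le\varepsilon^{*}_{\underline{\omega},\alpha}(x)\le C_{\underline{\kappa}}\frac{x^{1-\omega}}{\sqrt{\log(x)}}\,\Sigma(x)$ for all $x\ge e$, the lower bound being Lemma \ref{lem:bounds_-}; and unpacking the constants shows $\displaystyle D_{\underline{\omega}} = 2\,\frac{1+\underline{\kappa}}{1-\underline{\kappa}}\,C_{\underline{\kappa}}$. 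Hence the whole statement reduces to the uniform bound
\[ \Sigma(x)\ \le\ 2\,\frac{1+\underline{\kappa}}{1-\underline{\kappa}}\qquad\text{for all }x\ge e. \]
Corollary \ref{cor:s_varepsilon_-} used only the weaker $\Sigma(x)\le n-\underline{m}$, costing a spurious factor of order $\log(x)$; the real content here is that $\Sigma(x)$ is bounded by a constant depending on $\underline{\kappa}$ alone.

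To prove the displayed bound I would write the $k$-th summand as a product of ratios, $\displaystyle t_{k}=\prod_{i=1}^{k}\frac{\underline{m}+i}{\log(x)}$, and use two elementary facts. First, since $\underline{m}\le\underline{\kappa}\log(x)$ and, for $i\le n-\underline{m}$, $\underline{m}+i\le n\le\log(x)$, every factor with $1\le i\le n-\underline{m}$ lies in $(0,1]$, so $t_{k}$ is non-increasing on the summation range. Second, for $i\le k_{0}:=\big\lfloor\tfrac{1-\underline{\kappa}}{2}\log(x)\big\rfloor$ one has $\underline{m}+i\le\underline{\kappa}\log(x)+\tfrac{1-\underline{\kappa}}{2}\log(x)=\tfrac{1+\underline{\kappa}}{2}\log(x)$, so each such factor is at most $\rho:=\tfrac{1+\underline{\kappa}}{2}<1$. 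Splitting $\Sigma(x)=\sum_{1\le k\le k_{0}}t_{k}+\sum_{k_{0}<k\le n-\underline{m}}t_{k}$, the first block is dominated term by term by $\sum_{k\ge1}\rho^{k}=\tfrac{1+\underline{\kappa}}{1-\underline{\kappa}}$, while in the second block monotonicity gives $t_{k}\le t_{k_{0}}\le\rho^{\,k_{0}}$ for each of the at most $n-\underline{m}\le\log(x)$ remaining terms; since $k_{0}\ge\tfrac{1-\underline{\kappa}}{2}\log(x)-1$ and $\rho<1$, the quantity $\log(x)\,\rho^{\,k_{0}}$ is $\le\frac{2}{1+\underline{\kappa}}\cdot\frac{\log(x)}{x^{\gamma}}$ with $\gamma=\tfrac{1-\underline{\kappa}}{2}\log\tfrac{2}{1+\underline{\kappa}}>0$, and $\sup_{x\ge e}\tfrac{\log(x)}{x^{\gamma}}$ is a finite one-variable maximum. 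The few degenerate small-$x$ cases (where $\underline{\kappa}\log(x)<1$ so $\underline{m}=0$ and $li_{\underline{\omega},\alpha}$ collapses, or where $\underline{m}+1>\log(x)$ so the monotonicity step needs care) are handled directly, since there $\Sigma(x)$ has only boundedly many terms, each $\le1$.

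The main obstacle is the tail. Near $k=n-\underline{m}$ the ratios $\tfrac{\underline{m}+i}{\log(x)}$ climb back towards $1$, so the geometric comparison is vacuous there, and the crude ``number of terms times largest term'' estimate $\log(x)\,\rho^{\,k_{0}}$, while uniformly bounded, is (as a short computation of $\sup u^{2}\log(2/u)$ reveals) a little too large by itself to keep the total under the precise constant $2\tfrac{1+\underline{\kappa}}{1-\underline{\kappa}}$ for every $\underline{\kappa}\in(0,1)$. Landing exactly on $D_{\underline{\omega}}$ therefore requires a sharper treatment of the tail — for instance splitting it once or twice more, or comparing it to a geometric series whose ratio is kept strictly below $1$ via $\underline{m}+i\le n\le\log(x)$ with the floor functions tracked carefully (equivalently, a dedicated sub-lemma bounding $\Sigma(x)$ by a constant, in the spirit of the lemma used in Corollary \ref{cor:s_varepsilon_-}) — and then checking the handful of small-$x$ values by hand. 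Everything else (the reduction through Lemma \ref{lem:g_varepsilon_-}, the geometric bound on the bulk of $\Sigma(x)$, and the arithmetic relating $C_{\underline{\kappa}}$ and $D_{\underline{\omega}}$) is routine.
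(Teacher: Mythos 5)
Your reduction is exactly the paper's: pass through Lemma \ref{lem:g_varepsilon_-}, note $D_{\underline{\omega}}=2\left(\frac{1+\underline{\kappa}}{1-\underline{\kappa}}\right)C_{\underline{\kappa}}$, and reduce everything to the uniform bound $\Sigma(x)\le 2\frac{1+\underline{\kappa}}{1-\underline{\kappa}}$. But at that decisive step your argument does not close, and you say so yourself: bounding the tail by (number of terms)$\times$(largest term), i.e.\ by $\log(x)\,\rho^{k_{0}}$ with $\rho=\frac{1+\underline{\kappa}}{2}$, gives a quantity whose supremum over $x\ge e$ is of order $\frac{1}{e\gamma}\sim\frac{4}{e(1-\underline{\kappa})^{2}}$ as $\underline{\kappa}\to 1$, which overshoots the remaining budget $\frac{1+\underline{\kappa}}{1-\underline{\kappa}}\sim\frac{2}{1-\underline{\kappa}}$; since the corollary is stated for every $\omega\in(0,1)$, hence every $\underline{\kappa}\in(0,1)$, the ``sharper treatment of the tail'' you defer to is not optional fine-tuning but the actual content of the proof. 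As written, the proposal is an honest strategy sketch with a genuine gap at the one nontrivial point.

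The missing idea is the paper's Lemma \ref{lem:sumprod_n-m_1}, and it is within reach of the monotonicity you already established. Split the sum of $t_{k}=\prod_{i=1}^{k}\frac{\underline{m}+i}{\log(x)}$ at the \emph{midpoint} of its actual range, $m'=\left\lfloor\frac{n-\underline{m}}{2}\right\rfloor$, rather than at $k_{0}=\left\lfloor\frac{1-\underline{\kappa}}{2}\log(x)\right\rfloor$ (with your split the tail can have a term or two more than the head, so the pairing below is not automatic). Since the $t_{k}$ are non-increasing, each tail term is at most the corresponding head term and the tail has no more terms than the head, so the whole sum is at most twice the head. On the head, every index satisfies $\underline{m}+k\le \underline{m}+m'\le\frac{n+\underline{m}}{2}\le\frac{\log(x)+\underline{m}}{2}$, so $t_{k}\le\left(\frac{\log(x)+\underline{m}}{2\log(x)}\right)^{k}$ and the geometric series yields
\[
\Sigma(x)\ \le\ 2\left(\frac{\log(x)+\underline{m}}{\log(x)-\underline{m}}\right)\ \le\ 2\left(\frac{1+\underline{\kappa}}{1-\underline{\kappa}}\right),
\]
the last step using $\underline{m}\le\underline{\kappa}\log(x)$ and monotonicity in $\underline{m}$. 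This replaces your lossy ``count times largest term'' tail estimate, hits the constant $D_{\underline{\omega}}$ exactly, and also disposes of the degenerate small-$x$ worries without separate case checks.
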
 
    \begin{proof}
        $\displaystyle 0 \le \varepsilon^{*}_{\underline{\omega},\alpha} \le \left(\frac{C_{\underline{\kappa}}x^{1-\omega}}{\sqrt{\log(x)}}\right)\sum_{k=1}^{n}\frac{1}{\log^{k}(x)}\prod_{i=1}^{k} (\underline{m}+i)$  (Lemma \ref{lem:g_varepsilon_-}). So, $\displaystyle \varepsilon^{*}_{\underline{\omega},\alpha} \le \left(\frac{C_{\underline{\kappa}}x^{1-\omega}}{\sqrt{\log(x)}}\right)2\left(\frac{\log(x)+\underline{m}}{\log(x)-\underline{m}}\right)$ (Lemma \ref{lem:sumprod_n-m_1}). Then, $\displaystyle \varepsilon^{*}_{\underline{\omega},\alpha} \le \left(\frac{2C_{\underline{\kappa}}x^{1-\omega}}{\sqrt{\log(x)}}\right)\left(\frac{1+\underline{\kappa}}{1-\underline{\kappa}}\right) = D_{\underline{\omega}}\frac{x^{1-\omega}}{\sqrt{\log(x)}}$.
    \end{proof}

    \begin{lem}
        \label{lem:g_varepsilon_+}
        $\displaystyle 0 \le  \varepsilon^{*}_{\overline{\omega},\alpha}(x) \le \left(\frac{C_{\overline{\kappa}}x^{1-\omega}}{\sqrt{\log(x)}}\right)\left(1+\sum_{k=1}^{\overline{m}-n}\log^{k}(x)\prod_{i=0}^{k-1}\frac{1}{\overline{m}-i}\right)$ for all real number $\displaystyle x \ge e$  with = $\varepsilon^{*}_{\overline{\omega},\alpha}(x) = li_{\overline{\omega},\alpha}(x) - li_{*}(x)$, $n=\lfloor \log(x) \rfloor$, $\displaystyle C_{\overline{\kappa}} = \sqrt{\frac{2\pi}{\overline{\kappa}}}e^{\frac{13}{12}}$, and $\displaystyle \overline{m}$ and $\overline{\kappa}$ as defined in \ref{def:notation}.
    \end{lem}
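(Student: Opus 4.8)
The plan is to squeeze $\varepsilon^{*}_{\overline{\omega},\alpha}(x) = li_{\overline{\omega},\alpha}(x) - li_{*}(x)$ between a tail of the asymptotic series and $0$, then re-index that tail so that its dominant factor is exactly $\frac{x\overline{m}!}{\log^{\overline{m}+1}(x)}$, to which Corollary \ref{cor:StirlingKappa} applies with $\kappa = \overline{\kappa}$ and $m = \overline{m}$. The lower bound $0 \le \varepsilon^{*}_{\overline{\omega},\alpha}(x)$ is immediate from Lemma \ref{lem:bounds_+}, so only the upper bound needs work.

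First I would record the elementary facts needed to invoke the earlier results: since $\overline{\kappa} > 1$ (Definition \ref{def:notation}) and $x \ge e$, we have $\overline{\kappa}\log(x) > \log(x) \ge n = \lfloor\log(x)\rfloor$, hence $\overline{m} = \lfloor \overline{\kappa}\log(x)\rfloor \ge n \ge 1$, and also $x \ge e \ge e^{1/\overline{\kappa}}$, so the hypotheses of Corollary \ref{cor:StirlingKappa} are met. Next, from Definition \ref{def:li_+} together with $\alpha \le 1$ one gets $li_{\overline{\omega},\alpha}(x) \le \frac{x}{\log(x)}\sum_{k=0}^{\overline{m}}\frac{k!}{\log^{k}(x)}$, and from Lemma \ref{lem:Stieltjes} one gets $li_{*}(x) \ge \frac{x}{\log(x)}\sum_{k=0}^{n-1}\frac{k!}{\log^{k}(x)}$. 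Subtracting leaves the tail $\varepsilon^{*}_{\overline{\omega},\alpha}(x) \le \frac{x}{\log(x)}\sum_{k=n}^{\overline{m}}\frac{k!}{\log^{k}(x)} = \sum_{\ell=n}^{\overline{m}}\frac{x\,\ell!}{\log^{\ell+1}(x)}$, which is a genuine (possibly single-term) sum because $\overline{m} \ge n$.

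The one real computation is the re-indexing. Factoring $\frac{x\overline{m}!}{\log^{\overline{m}+1}(x)}$ out of the $\ell$-th term gives the ratio $\frac{\ell!}{\overline{m}!}\log^{\overline{m}-\ell}(x)$; setting $k = \overline{m}-\ell$ (so $k$ runs from $0$ to $\overline{m}-n$) this ratio is $\log^{k}(x)\prod_{i=0}^{k-1}\frac{1}{\overline{m}-i}$, and the $k=0$ summand equals $1$ under the empty-product convention. Hence $\varepsilon^{*}_{\overline{\omega},\alpha}(x) \le \frac{x\overline{m}!}{\log^{\overline{m}+1}(x)}\left(1 + \sum_{k=1}^{\overline{m}-n}\log^{k}(x)\prod_{i=0}^{k-1}\frac{1}{\overline{m}-i}\right)$. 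Finally, Corollary \ref{cor:StirlingKappa} yields $\frac{x\overline{m}!}{\log^{\overline{m}+1}(x)} < C_{\overline{\kappa}}\frac{x^{1-\overline{\kappa}(1-\log(\overline{\kappa}))}}{\sqrt{\log(x)}} = C_{\overline{\kappa}}\frac{x^{1-\omega}}{\sqrt{\log(x)}}$, using $\omega = \overline{\kappa}(1-\log(\overline{\kappa}))$ from Definition \ref{def:notation}, which is exactly the claimed bound.

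I expect the only delicate point to be the bookkeeping in the re-indexing: checking that the upper summation limit is precisely $\overline{m}-n$, that the empty-product convention correctly reproduces the leading term $1$, and that the range $n \le \ell \le \overline{m}$ is non-empty so the tail sum makes sense; once those are settled, the argument is a direct chain of the already-established inequalities.
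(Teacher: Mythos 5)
Your proposal is correct and follows essentially the same route as the paper: bound $li_{\overline{\omega},\alpha}(x)$ above using $\alpha\le 1$, bound $li_{*}(x)$ below by Lemma \ref{lem:Stieltjes}, rewrite the resulting tail $\frac{x}{\log(x)}\sum_{k=n}^{\overline{m}}\frac{k!}{\log^{k}(x)}$ by factoring out $\frac{x\overline{m}!}{\log^{\overline{m}+1}(x)}$, and finish with Corollary \ref{cor:StirlingKappa}. Your version is in fact slightly more careful than the paper's, since you explicitly verify $\overline{m}\ge n$, the hypothesis $x\ge e^{1/\overline{\kappa}}$, and the re-indexing $k=\overline{m}-\ell$.
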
 
    \begin{proof}
        $\displaystyle 0 \le \varepsilon^{*}_{\overline{\omega}}$  (Lemma \ref{lem:bounds_+}). Now, $\displaystyle li_{*}(x) \ge \frac{x}{\log(x)}\sum_{k=0}^{n-1}\frac{k!}{\log^{k}(x)}$ with $n=\lfloor \log(x) \rfloor$ (Lemma \ref{lem:Stieltjes}) and $\displaystyle li_{\overline{w},\alpha}(x) \le \frac{x}{\log(x)}\sum_{k=0}^{\overline{m}}\frac{k!}{\log^{k}(x)}$, (since $\displaystyle 0 \le \alpha < 1$ in $li_{\overline{\omega}}(x)$). So, $\displaystyle \varepsilon^{*}_{\overline{\omega},\alpha} \le \frac{x}{\log(x)}\sum_{k=n}^{\overline{m}}\frac{k!}{\log^{k}(x)} = \frac{x \overline{m}!}{\log^{\overline{m}+1}(x)}\left(1+\sum_{k=1}^{\overline{m}-n}\log^{k}(x)\prod_{i=0}^{k-1}\frac{1}{\overline{m}-i}\right)$. Then, $\displaystyle \varepsilon^{*}_{\overline{\omega},\alpha} \le \left(\frac{C_{\overline{\kappa}}x^{1-\omega}}{\sqrt{\log(x)}}\right)\left(1+\sum_{k=1}^{\overline{m}-n}\log^{k}(x)\prod_{i=0}^{k-1}\frac{1}{\overline{m}-i}\right)$ (Corollary \ref{cor:StirlingKappa}).
    \end{proof} 

    \begin{cor}
        \label{cor:s_varepsilon_+}
        $\displaystyle 0 \le \varepsilon^{*}_{\overline{\omega}}(x) = li_{\overline{\omega}}(x) - li_{*}(x) \le S_{\overline{\omega}}x^{1-\omega}\sqrt{\log(x)}$ for all real number $\displaystyle x \ge e$ with $\displaystyle S_{\overline{\omega}} = \sqrt{\frac{2\pi}{\overline{\kappa}}}\left(\overline{\kappa}+1\right)e^{\frac{13}{12}}$ and $\overline{\kappa}$ as defined in \ref{def:notation}.
    \end{cor}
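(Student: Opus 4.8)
The plan is to deduce this from Lemma \ref{lem:g_varepsilon_+}, exactly along the lines of the proof of Corollary \ref{cor:s_varepsilon_-}. That lemma already supplies $0 \le \varepsilon^{*}_{\overline{\omega},\alpha}(x)$ together with the explicit upper bound $\displaystyle \left(\frac{C_{\overline{\kappa}}x^{1-\omega}}{\sqrt{\log(x)}}\right)\left(1+\sum_{k=1}^{\overline{m}-n}\log^{k}(x)\prod_{i=0}^{k-1}\frac{1}{\overline{m}-i}\right)$, so the whole task reduces to showing that the parenthesised factor is at most $(\overline{\kappa}+1)\log(x)$ whenever $x\ge e$.

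First I would bound each summand individually. Fix $k$ with $1\le k\le\overline{m}-n$. Then $\displaystyle \prod_{i=0}^{k-1}(\overline{m}-i)=\overline{m}(\overline{m}-1)\cdots(\overline{m}-k+1)$ is a product of $k$ integers, the smallest of which is $\overline{m}-k+1\ge\overline{m}-(\overline{m}-n)+1=n+1>\log(x)$ because $n=\lfloor\log(x)\rfloor$. Hence every factor exceeds $\log(x)$, so $\displaystyle \prod_{i=0}^{k-1}(\overline{m}-i)>\log^{k}(x)$ and therefore $\displaystyle \log^{k}(x)\prod_{i=0}^{k-1}\frac{1}{\overline{m}-i}<1$. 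It follows that $\displaystyle 1+\sum_{k=1}^{\overline{m}-n}\log^{k}(x)\prod_{i=0}^{k-1}\frac{1}{\overline{m}-i}\le 1+\max(0,\overline{m}-n)$, the sum being empty whenever $\overline{m}\le n$; this is the step encapsulated by the auxiliary estimate already used in Corollary \ref{cor:s_varepsilon_-} (cf. Lemma \ref{lem:s_sumprod_n-m_1}).

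Next I would turn $1+\max(0,\overline{m}-n)$ into a multiple of $\log(x)$. Using $\overline{m}=\lfloor\overline{\kappa}\log(x)\rfloor\le\overline{\kappa}\log(x)$ and $n=\lfloor\log(x)\rfloor>\log(x)-1$, one gets $\max(0,\overline{m}-n)\le(\overline{\kappa}-1)\log(x)+1$ (trivially when $\overline{m}\le n$, and from $\overline{m}-n<\overline{\kappa}\log(x)-\log(x)+1$ otherwise), hence $1+\max(0,\overline{m}-n)\le(\overline{\kappa}-1)\log(x)+2$. Since $x\ge e$ gives $\log(x)\ge1$ and so $2\le 2\log(x)$, this is at most $(\overline{\kappa}+1)\log(x)$. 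Substituting back into Lemma \ref{lem:g_varepsilon_+} yields $\displaystyle \varepsilon^{*}_{\overline{\omega},\alpha}(x)\le\frac{C_{\overline{\kappa}}x^{1-\omega}}{\sqrt{\log(x)}}(\overline{\kappa}+1)\log(x)=C_{\overline{\kappa}}(\overline{\kappa}+1)x^{1-\omega}\sqrt{\log(x)}=S_{\overline{\omega}}x^{1-\omega}\sqrt{\log(x)}$, since $C_{\overline{\kappa}}(\overline{\kappa}+1)=\sqrt{\tfrac{2\pi}{\overline{\kappa}}}(\overline{\kappa}+1)e^{\frac{13}{12}}=S_{\overline{\omega}}$; the lower bound is immediate from Lemma \ref{lem:bounds_+}. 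The same argument works for either choice $\alpha\in\{\overline{\alpha},1\}$.

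The computation is routine; the only place needing care is the per-term estimate $\log^{k}(x)\prod_{i=0}^{k-1}\frac{1}{\overline{m}-i}<1$, which relies on the range restriction $k\le\overline{m}-n$ and the floor inequality $n+1>\log(x)$, together with the correct treatment of the degenerate case $\overline{m}\le n$ in which the inner sum contributes nothing. I expect that to be the main (and essentially only) obstacle.
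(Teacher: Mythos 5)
Your proposal is correct and follows essentially the same route as the paper: start from Lemma \ref{lem:g_varepsilon_+}, bound the bracketed sum by $\overline{m}-n$ (your inline per-term estimate $\log^{k}(x)\prod_{i=0}^{k-1}\frac{1}{\overline{m}-i}<1$ is exactly the content of Lemma \ref{lem:s_sumprod_n-m_2}, which the paper cites instead of re-deriving), and then use $n>\log(x)-1$, $\overline{m}\le\overline{\kappa}\log(x)$ and $\log(x)\ge 1$ to reach $(\overline{\kappa}+1)\log(x)$. The only cosmetic slip is citing Lemma \ref{lem:s_sumprod_n-m_1} (the $\underline{\omega}$ analogue) rather than Lemma \ref{lem:s_sumprod_n-m_2}, but since you prove the needed estimate yourself this does not affect correctness.
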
 
    \begin{proof}
        $\displaystyle 0 \le \varepsilon^{*}_{\overline{\omega},\alpha}(x) \le \left(\frac{C_{\overline{\kappa}}x^{1-\omega}}{\sqrt{\log(x)}}\right)\left(1+\sum_{k=1}^{\overline{m}-n}\log^{k}(x)\prod_{i=0}^{k-1}\frac{1}{\overline{m}-i}\right)$ (Lemma \ref{lem:g_varepsilon_+}). Now, $\displaystyle \varepsilon^{*}_{\overline{\omega},\alpha} \le \left(\frac{C_{\overline{\kappa}}x^{1-\omega}}{\sqrt{\log(x)}}\right)\left(1+\overline{m}-n\right) \le \left(\frac{C_{\overline{\kappa}}x^{1-\omega}}{\sqrt{\log(x)}}\right) \left(2+\overline{m}-\log(x)\right) $ (Lemma \ref{lem:s_sumprod_n-m_2} and $\log(x)-1 < n $). Then, $\displaystyle \varepsilon^{*}_{\overline{\omega},\alpha} \le \left(\frac{C_{\overline{\kappa}}x^{1-\omega}}{\sqrt{\log(x)}}\right)\left(\overline{\kappa}+1\right)\log(x) = S_{\overline{\omega}}x^{1-\omega}\sqrt{\log(x)}$ (since $\displaystyle 2+\overline{m}-\log(x) \le 2 + \overline{\kappa}log(x) - \log(x) \le (2 + \overline{\kappa} - 1)\log(x)$).
    \end{proof}    

    \begin{cor}
        \label{cor:varepsilon_+}
        $\displaystyle 0 \le \varepsilon^{*}_{\overline{\omega},\alpha}(x) = li_{\overline{\omega},\alpha}(x) - li_{*}(x) \le D_{\overline{\omega}}\frac{x^{1-\omega}}{\sqrt{\log(x)}}$ for all real number $\displaystyle x \ge e$ with $\displaystyle D_{\overline{\omega}} = \left(\frac{\overline{\kappa}+3}{\overline{\kappa}-1}\right)\sqrt{\frac{2\pi}{\overline{\kappa}}}e^{\frac{13}{12}}$ and $\overline{\kappa}$ as defined in \ref{def:notation}..
    \end{cor}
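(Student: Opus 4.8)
The plan is to transpose to the upper side the argument that proves Corollary~\ref{cor:varepsilon_-}. The lower inequality $0\le\varepsilon^{*}_{\overline{\omega},\alpha}(x)$ is exactly Lemma~\ref{lem:bounds_+}, so only the upper bound requires work, and for that Lemma~\ref{lem:g_varepsilon_+} already gives $\varepsilon^{*}_{\overline{\omega},\alpha}(x)\le\frac{C_{\overline{\kappa}}\,x^{1-\omega}}{\sqrt{\log(x)}}\,T(x)$, with $C_{\overline{\kappa}}=\sqrt{2\pi/\overline{\kappa}}\,e^{13/12}$, $n=\lfloor\log(x)\rfloor$ and $T(x)=1+\sum_{k=1}^{\overline{m}-n}\log^{k}(x)\prod_{i=0}^{k-1}(\overline{m}-i)^{-1}$. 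Since $D_{\overline{\omega}}=\frac{\overline{\kappa}+3}{\overline{\kappa}-1}\,C_{\overline{\kappa}}$, everything reduces to the purely arithmetical estimate $T(x)\le\frac{\overline{\kappa}+3}{\overline{\kappa}-1}$ for all $x\ge e$; multiplying through by $x^{1-\omega}/\sqrt{\log(x)}$ then yields the claim.

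To prove $T(x)\le\frac{\overline{\kappa}+3}{\overline{\kappa}-1}$ I would first rewrite the $k$-th summand of $T(x)$ as $a_{k}:=\log^{k}(x)\,(\overline{m}-k)!/\overline{m}!$, so that $a_{k}/a_{k-1}=\log(x)/(\overline{m}-k+1)$. On the range $1\le k\le\overline{m}-n$ one has $\overline{m}-k+1\ge n+1>\log(x)$, hence the $a_{k}$ are strictly decreasing; and because $\overline{m}$ lies within $1$ of $\overline{\kappa}\log(x)$, the leading ratios are close to $1/\overline{\kappa}$, so $T(x)$ is controlled by a convergent geometric-type series. Making this explicit is the $\overline{\omega}$-side analogue of Lemma~\ref{lem:sumprod_n-m_1} and is proved the same way (a term-by-term comparison of the $a_{k}$ against a geometric majorant); one then collapses the resulting bound to $\frac{\overline{\kappa}+3}{\overline{\kappa}-1}$ using $\overline{m}\le\overline{\kappa}\log(x)$, $\overline{m}>\overline{\kappa}\log(x)-1$ and $\log(x)<n+1$. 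The few configurations not covered by the geometric estimate are handled by hand: if $\overline{m}=n$ the sum is empty and $T(x)=1$, while if $\overline{m}=n+1$ then $T(x)=1+\log(x)/(n+1)<2$, and in both cases $T(x)\le\frac{\overline{\kappa}+3}{\overline{\kappa}-1}$ since $\overline{\kappa}<e$.

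I expect the main obstacle to be exactly the point at which the upper side fails to mirror the lower one. In Corollary~\ref{cor:varepsilon_-} the relevant denominator $\log(x)-\underline{m}$ is always at least $(1-\underline{\kappa})\log(x)$, a fixed fraction of $\log(x)$, so no case distinction is needed; here the counterpart $\overline{m}-\log(x)$ can be an arbitrarily small proper fraction of $1$ — precisely when $\overline{m}=n+1$, i.e.\ when $\lfloor\overline{\kappa}\log(x)\rfloor=\lfloor\log(x)\rfloor+1$ — so the bound cannot be read off in one line and one must both separate the small cases and absorb the floor-function losses, which is what pushes the numerator constant up to $\overline{\kappa}+3$. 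I would also verify that the hypothesis $x\ge e$ is enough for every auxiliary result invoked: as $\overline{\kappa}>1$ gives $e^{1/\overline{\kappa}}<e$, Corollary~\ref{cor:StirlingKappa}, and hence Lemma~\ref{lem:g_varepsilon_+}, applies on all of $[e,\infty)$.
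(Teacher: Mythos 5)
Your proposal is correct and follows essentially the same route as the paper: both start from Lemma \ref{lem:g_varepsilon_+}, bound the tail sum by a split-and-geometric-majorant argument, and use the floor estimates $\overline{m}+1>\overline{\kappa}\log(x)$ and $n\le\log(x)$ to collapse the factor to $1+\frac{4}{\overline{\kappa}-1}=\frac{\overline{\kappa}+3}{\overline{\kappa}-1}$, i.e.\ $D_{\overline{\omega}}=\left(\frac{\overline{\kappa}+3}{\overline{\kappa}-1}\right)C_{\overline{\kappa}}$. The only difference is that the auxiliary estimate you plan to prove as an ``$\overline{\omega}$-side analogue of Lemma \ref{lem:sumprod_n-m_1}'' already exists in the paper as Lemma \ref{lem:sumprod_n-m_2}, which the paper's proof cites directly; your separate treatment of the cases $\overline{m}=n$ and $\overline{m}=n+1$ is a harmless addition.
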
 
    \begin{proof}
        $\displaystyle 0 \le \varepsilon^{*}_{\overline{\omega},\alpha}(x) \le \left(\frac{C_{\overline{\kappa}}x^{1-\omega}}{\sqrt{\log(x)}}\right)\left(1+\sum_{k=1}^{\overline{m}-n}\log^{k}(x)\prod_{i=0}^{k-1}\frac{1}{\overline{m}-i}\right)$ (Lemma \ref{lem:g_varepsilon_+}). Now, $\displaystyle \varepsilon^{*}_{\overline{\omega},\alpha} \le \left(\frac{C_{\overline{\kappa}}x^{1-\omega}}{\sqrt{\log(x)}}\right)\left(1+2\left(\frac{2\log(x)}{\overline{m}-\log(x)+1}\right)\right) \le \left(\frac{C_{\overline{\kappa}}x^{1-\omega}}{\sqrt{\log(x)}}\right) \left(1+\frac{4}{\overline{\kappa}-1}\right)$ (By Lemma \ref{lem:sumprod_n-m_2}, $n \le \log(x)$, and $\displaystyle \overline{m} + 1 \ge \overline{\kappa}\log(x)$). Therefore, $\displaystyle \varepsilon^{*}_{\overline{\omega},\alpha} \le D_{\overline{\omega}}\frac{x^{1-\omega}}{\sqrt{\log(x)}}$.
    \end{proof}    

\subsection{$\displaystyle \frac{1}{2}$-Asymptotic Approximation of $li_{*}(x)$}
    
    We are interested in the asymptotic approximation functions defined by $\displaystyle \omega=\frac{1}{2}$, i.e., $\displaystyle li_{0}(x) = li_{\underline{1/2},0}(x)$, $\displaystyle \underline{li}(x) = li_{\underline{1/2},\underline{\alpha}}(x)$, $\displaystyle \overline{li}(x) = li_{\overline{1/2},\overline{\alpha}}(x)$, and $\displaystyle li_{1}(x) = li_{\overline{1/2},1}(x)$. Then, $\underline{\kappa} \approx 0.18668231$, $\overline{\kappa} \approx 2.155535203$, $\displaystyle \underline{D} = D_{\underline{1/2}} \approx 50.01822612$, and $\overline{D} = D_{\overline{1/2}}  \approx 22.50549859$. 

    \begin{lem}
        \label{lem:varepsilon_1/2-}
        $\displaystyle \underline{\varepsilon}^{*}(x) = li_{*}(x) - \underline{li}(x) \le \varepsilon_{0}^{*}(x) = li_{*}(x) - li_{0}(x) \le \underline{D}\sqrt{\frac{x}{\log(x)}}$ for all real number $\displaystyle x \ge e$, with $\displaystyle \underline{D} \approx 50.0182261266$.
    \end{lem}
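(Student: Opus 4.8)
The plan is to obtain this lemma as an immediate specialization of Corollary \ref{cor:varepsilon_-} to $\omega = \frac12$, combined with one elementary monotonicity remark, so no substantial new work is needed.

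First I would prove the left inequality $\underline{\varepsilon}^{*}(x) \le \varepsilon_{0}^{*}(x)$. By Definition \ref{def:li_-}, the functions $\underline{li}(x) = li_{\underline{1/2},\underline{\alpha}}(x)$ and $li_{0}(x) = li_{\underline{1/2},0}(x)$ differ only in the value of $\alpha$ multiplying the term $\alpha\,\frac{x\,\underline{m}!}{\log^{\underline{m}+1}(x)}$, and this term is strictly positive for $x \ge e$ since $\log(x) \ge 1$ and $\underline{m} \ge 0$. Because $\underline{\alpha} = \underline{\kappa}\log(x) - \underline{m} = \underline{\kappa}\log(x) - \lfloor \underline{\kappa}\log(x)\rfloor \ge 0$, we get $li_{0}(x) \le \underline{li}(x)$; subtracting both from $li_{*}(x)$ reverses the inequality, giving $li_{*}(x) - \underline{li}(x) \le li_{*}(x) - li_{0}(x)$, i.e.\ $\underline{\varepsilon}^{*}(x) \le \varepsilon_{0}^{*}(x)$.

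Next I would invoke Corollary \ref{cor:varepsilon_-} with $\omega = \frac12$ and $\alpha = 0$. It states $0 \le \varepsilon^{*}_{\underline{\omega},\alpha}(x) = li_{*}(x) - li_{\underline{\omega},\alpha}(x) \le D_{\underline{\omega}}\,\frac{x^{1-\omega}}{\sqrt{\log(x)}}$ for all $x \ge e$. Here $\varepsilon^{*}_{\underline{1/2},0}(x) = li_{*}(x) - li_{0}(x) = \varepsilon_{0}^{*}(x)$, the exponent simplifies via $x^{1-1/2}/\sqrt{\log(x)} = \sqrt{x}/\sqrt{\log(x)} = \sqrt{x/\log(x)}$, and $D_{\underline{1/2}} = \underline{D}$ by the definition of $\underline{D}$ in this subsection. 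Hence $\varepsilon_{0}^{*}(x) \le \underline{D}\sqrt{x/\log(x)}$, and chaining with the first inequality completes the proof. The nonnegativity half of the corollary (equivalently Lemma \ref{lem:bounds_-}) also records $\varepsilon_{0}^{*}(x) \ge 0$, so $\underline{\varepsilon}^{*}(x) \ge 0$ as well.

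There is essentially no obstacle: the statement is just Corollary \ref{cor:varepsilon_-} read at $\omega = \frac12$, plus the trivial observation that enlarging the subtracted approximant can only shrink the error. The only points requiring care are the numerical evaluation $\underline{D} = \sqrt{8\pi/\underline{\kappa}}\,e^{13/12}\bigl(\tfrac{1+\underline{\kappa}}{1-\underline{\kappa}}\bigr) \approx 50.0182261266$ obtained from $\underline{\kappa} \approx 0.18668231$, and the algebraic identity $x^{1/2}/\sqrt{\log(x)} = \sqrt{x/\log(x)}$ used to present the bound in the stated form.
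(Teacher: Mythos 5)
Your proposal is correct and follows essentially the same route as the paper: the bound is Corollary \ref{cor:varepsilon_-} specialized to $\omega=\tfrac12$ (so $x^{1-\omega}/\sqrt{\log(x)}=\sqrt{x/\log(x)}$ and $D_{\underline{1/2}}=\underline{D}$), and the chain $\underline{\varepsilon}^{*}(x)\le\varepsilon_{0}^{*}(x)$ comes from $li_{0}(x)\le\underline{li}(x)$ since $\underline{\alpha}\ge 0$, exactly as the paper argues. No gaps; your write-up merely spells out the monotonicity-in-$\alpha$ step more explicitly than the paper does.
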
 
    \begin{proof}
        Follows from Corollary \ref{cor:varepsilon_-}, $\displaystyle 1-\omega=\frac{1}{2}$, $\underline{\kappa} \approx 0.18668231$, and $li_{0}(x) \le \underline{li}(x)$ (since $0 \le \underline{\alpha})$.
    \end{proof}

    \begin{lem}
        \label{lem:varepsilon_1/2+}
        $\displaystyle \overline{\varepsilon}^{*}(x) =  \overline{li}(x) - li_{*}(x) \le \varepsilon_{1}^{*}(x) = li_{1}(x) - li_{*}(x) \le \overline{D}\sqrt{\frac{x}{\log(x)}}$ for all real number $\displaystyle x \ge e$, with $\displaystyle \overline{D} \approx 22.5054985892$.
    \end{lem}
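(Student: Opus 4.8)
The plan is to mirror the argument already used for Lemma~\ref{lem:varepsilon_1/2-}, now invoking the upper-bound machinery of Corollary~\ref{cor:varepsilon_+}. First I would specialize that corollary to $\omega=\tfrac12$. Since $1-\omega=\tfrac12$ we have $\frac{x^{1-\omega}}{\sqrt{\log(x)}}=\sqrt{\frac{x}{\log(x)}}$, so Corollary~\ref{cor:varepsilon_+} reads $0\le\varepsilon^{*}_{\overline{\omega},\alpha}(x)=li_{\overline{\omega},\alpha}(x)-li_{*}(x)\le D_{\overline{\omega}}\sqrt{\frac{x}{\log(x)}}$ for each admissible $\alpha\in\{\overline{\alpha},1\}$ (Definition~\ref{def:li_+}). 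Taking $\alpha=1$ gives $li_{1}(x)=li_{\overline{1/2},1}(x)$ and hence $\varepsilon_{1}^{*}(x)=li_{1}(x)-li_{*}(x)\le\overline{D}\sqrt{\frac{x}{\log(x)}}$ with $\overline{D}:=D_{\overline{1/2}}$; taking $\alpha=\overline{\alpha}$ gives the analogous bound for $\overline{\varepsilon}^{*}(x)=\overline{li}(x)-li_{*}(x)$, which is also $\ge 0$.

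Next I would pin down the constant. Here $\overline{\kappa}$ is the unique root in $(1,e)$ of $\kappa(1-\log(\kappa))=\tfrac12$; existence and uniqueness of the two roots (one below and one above $1$) is exactly the picture in Figure~\ref{fig:kappa(1-ln(kappa))}, since $\kappa(1-\log(\kappa))$ increases from $0$ at $\kappa=0$ to its maximum $1$ at $\kappa=1$ and decreases back to $0$ at $\kappa=e$. Numerically $\overline{\kappa}\approx 2.155535203$, and substituting into $D_{\overline{\omega}}=\left(\frac{\overline{\kappa}+3}{\overline{\kappa}-1}\right)\sqrt{\frac{2\pi}{\overline{\kappa}}}e^{13/12}$ yields $\overline{D}\approx 22.5054985892$, matching the statement. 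In particular the factor $\frac{\overline{\kappa}+3}{\overline{\kappa}-1}$ is positive because $\overline{\kappa}>1$, so Corollary~\ref{cor:varepsilon_+} applies as written.

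Finally I would establish the middle inequality $\overline{\varepsilon}^{*}(x)\le\varepsilon_{1}^{*}(x)$, which is equivalent to $\overline{li}(x)\le li_{1}(x)$. Both functions are instances of Definition~\ref{def:li_+} with the same $\overline{m}$, differing only in the coefficient $\alpha$ multiplying the nonnegative term $\frac{x}{\log(x)}\cdot\frac{\overline{m}!}{\log^{\overline{m}}(x)}$; since $\overline{\alpha}=\overline{\kappa}\log(x)-\overline{m}\in[0,1)$ we have $\overline{\alpha}\le 1$, whence $\overline{li}(x)\le li_{1}(x)$ and thus $\overline{li}(x)-li_{*}(x)\le li_{1}(x)-li_{*}(x)$. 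Chaining this with the bound from the previous step gives $\overline{\varepsilon}^{*}(x)\le\varepsilon_{1}^{*}(x)\le\overline{D}\sqrt{\frac{x}{\log(x)}}$, as claimed. There is essentially no hard step: the lemma is a direct specialization of Corollary~\ref{cor:varepsilon_+} plus a one-line monotonicity-in-$\alpha$ observation, and the only place requiring care is the numerical evaluation of $\overline{\kappa}$ and $\overline{D}$ together with the check that the relevant root of $\kappa(1-\log(\kappa))=\tfrac12$ is the one exceeding $1$.
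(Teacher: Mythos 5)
Your proposal is correct and follows essentially the same route as the paper: specialize Corollary~\ref{cor:varepsilon_+} to $\omega=\tfrac12$ (so $x^{1-\omega}/\sqrt{\log(x)}=\sqrt{x/\log(x)}$), evaluate $\overline{D}=D_{\overline{1/2}}$ at $\overline{\kappa}\approx 2.155535203$, and use $\overline{\alpha}\le 1$ to get $\overline{li}(x)\le li_{1}(x)$. In fact you cite the right corollary, whereas the paper's one-line proof mistakenly refers to Corollary~\ref{cor:varepsilon_-}; your version is the intended argument, spelled out in slightly more detail.
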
 
    \begin{proof}
        Follows from Corollary \ref{cor:varepsilon_-}, $\displaystyle 1-\omega=\frac{1}{2}$, $\overline{\kappa} \approx 2.155535203$, and $li_{1}(x) \ge \overline{li}(x)$ (since $\overline{\alpha}) \le 1$.
    \end{proof}    

    Tables \ref{table:li1-li2-1} and \ref{table:li1-li2-2} show values of $\displaystyle \varepsilon^{*}_{0}(x) =  li_{*}(x) - li_{0}(x)$, $\displaystyle \frac{\varepsilon^{*}_{0}(x)}{\sqrt{\displaystyle \frac{x}{\log(x)}}}$, $\displaystyle \varepsilon^{*}_{1}(x) =  li{1}(x) - li_{*}(x)$, $\displaystyle \frac{\varepsilon^{*}_{1}(x)}{\sqrt{\displaystyle \frac{x}{\log(x)}}}$, $\displaystyle \underline{\varepsilon}^{*}(x) =  li_{*}(x) - \underline{li}(x)$, $\displaystyle \frac{\underline{\varepsilon^{*}}(x)}{\sqrt{\displaystyle \frac{x}{\log(x)}}}$, $\displaystyle \overline{\varepsilon}^{*}(x) =  \overline{li}(x) - li_{*}(x)$, and $\displaystyle \frac{\overline{\varepsilon}^{*}(x)}{\sqrt{\displaystyle \frac{x}{\log(x)}}}$ for $x=10^1,\ldots,10^{29}$. Notice that computed ratios (multiplicative constant) are smaller than our estimations $\displaystyle \underline{D} \approx 50.0182261266$, and $\overline{D} \approx 22.50549859$.

    \begin{table}[htbp]
        \centering
        \begin{tabular}{|r|r|c|r|c|}
            \hline
            $x$ & $\displaystyle \varepsilon^{*}_{0}(x)$ & $\displaystyle \frac{\varepsilon^{*}_{0}(x)}{\sqrt{\displaystyle \frac{x}{\log(x)}}}$ & $\displaystyle \varepsilon^{*}_{1}(x)$ & $\displaystyle \frac{\varepsilon^{*}_{1}(x)}{\sqrt{\displaystyle \frac{x}{\log(x)}}}$ \\
            \hline
$10^{1}$ & $6.20$ & $2.97$ & $7.50$ & $3.60$ \\
$10^{2}$ & $30.10$ & $6.45$ & $19.04$ & $4.08$ \\
$10^{3}$ & $33.23$ & $2.76$ & $48.41$ & $4.02$ \\
$10^{4}$ & $160.26$ & $4.86$ & $131.96$ & $4.00$ \\
$10^{5}$ & $189.66$ & $2.03$ & $371.69$ & $3.98$ \\
$10^{6}$ & $1006.37$ & $3.74$ & $1081.41$ & $4.01$ \\
$10^{7}$ & $1228.85$ & $1.56$ & $3214.43$ & $4.08$ \\
$10^{8}$ & $6824.92$ & $2.92$ & $9688.79$ & $4.15$ \\
$10^{9}$ & $41026.02$ & $5.90$ & $29513.14$ & $4.24$ \\
$10^{10}$ & $48256.77$ & $2.31$ & $90615.57$ & $4.34$ \\
$10^{11}$ & $290952.54$ & $4.63$ & $279938.15$ & $4.45$ \\
$10^{12}$ & $349620.27$ & $1.83$ & $869086.04$ & $4.56$ \\
$10^{13}$ & $2111307.71$ & $3.65$ & $2709093.04$ & $4.68$ \\
$10^{14}$ & $2574265.41$ & $1.46$ & $8473443.11$ & $4.81$ \\
$10^{15}$ & $15560289.93$ & $2.89$ & $26579732.02$ & $4.93$ \\
$10^{16}$ & $97333587.55$ & $5.90$ & $83583833.49$ & $5.07$ \\
$10^{17}$ & $115969563.37$ & $2.29$ & $263411915.59$ & $5.21$ \\
$10^{18}$ & $722847752.94$ & $4.65$ & $831717944.55$ & $5.35$ \\
$10^{19}$ & $871653553.89$ & $1.82$ & $2630570690.35$ & $5.50$ \\
$10^{20}$ & $5417789519.97$ & $3.67$ & $8332555187.06$ & $5.65$ \\
$10^{21}$ & $6595055877.28$ & $1.45$ & $26429805276.11$ & $5.81$ \\
$10^{22}$ & $40899906101.43$ & $2.91$ & $83932453654.78$ & $5.97$ \\
$10^{23}$ & $259252743272.20$ & $5.96$ & $266850519584.53$ & $6.14$ \\
$10^{24}$ & $310499878990.33$ & $2.30$ & $849248495359.50$ & $6.31$ \\
$10^{25}$ & $1961049546483.68$ & $4.70$ & $2705224738143.37$ & $6.49$ \\
$10^{26}$ & $2368374639649.50$ & $1.83$ & $8624643594697.38$ & $6.67$ \\
$10^{27}$ & $14911710286211.34$ & $3.71$ & $27518163013281.19$ & $6.86$ \\
$10^{28}$ & $18135450483667.70$ & $1.45$ & $41019570013812.60$ & $3.29$ \\
$10^{29}$ & $113880130793454.00$ & $2.94$ & $130998204833437.00$ & $3.38$ \\
        \hline
        \end{tabular}
        
        \caption{Values of $\displaystyle \varepsilon^{*}_{0}(x) =  li_{*}(x) - li_{0}(x)$, $\displaystyle \frac{\varepsilon^{*}_{0}(x)}{\sqrt{\displaystyle \frac{x}{\log(x)}}}$, $\displaystyle \varepsilon^{*}_{1}(x) =  li_{1}(x) - li_{*}(x)$, and $\displaystyle \frac{\varepsilon^{*}_{1}(x)}{\sqrt{\displaystyle \frac{x}{\log(x)}}}$ for $x=10^1,\ldots,10^{29}$. Results are obtained with the Python program inside the Colab notebook (lifrac.ipynb) freely available at \cite{GomezPrimesGit}.}
        \label{table:li1-li2-1} 
    \end{table}
  
    \begin{table}[htbp]
        \centering
        \begin{tabular}{|r|r|c|r|c|}
            \hline
            $x$ & $\displaystyle \underline{\varepsilon}^{*}(x)$ & $\displaystyle \frac{\underline{\varepsilon}^{*}(x)}{\sqrt{\displaystyle \frac{x}{\log(x)}}}$ & $\displaystyle \overline{\varepsilon}^{*}(x)$ & $\displaystyle \frac{\overline{\varepsilon}^{*}(x)}{\sqrt{\displaystyle \frac{x}{\log(x)}}}$ \\
            \hline
$10^{1}$ & $4.33$ & $2.07$ & $7.37$ & $3.53$ \\
$10^{2}$ & $11.43$ & $2.45$ & $18.42$ & $3.95$ \\
$10^{3}$ & $27.16$ & $2.25$ & $45.95$ & $3.81$ \\
$10^{4}$ & $75.45$ & $2.29$ & $122.71$ & $3.72$ \\
$10^{5}$ & $170.10$ & $1.82$ & $338.06$ & $3.62$ \\
$10^{6}$ & $567.14$ & $2.10$ & $961.65$ & $3.57$ \\
$10^{7}$ & $1220.89$ & $1.55$ & $2793.79$ & $3.54$ \\
$10^{8}$ & $4538.22$ & $1.94$ & $8226.36$ & $3.53$ \\
$10^{9}$ & $12766.04$ & $1.83$ & $24468.26$ & $3.52$ \\
$10^{10}$ & $37187.86$ & $1.78$ & $73319.47$ & $3.51$ \\
$10^{11}$ & $123256.82$ & $1.96$ & $220934.61$ & $3.51$ \\
$10^{12}$ & $306955.57$ & $1.61$ & $668631.16$ & $3.51$ \\
$10^{13}$ & $1130329.16$ & $1.95$ & $2030435.84$ & $3.51$ \\
$10^{14}$ & $2538585.40$ & $1.44$ & $6182563.84$ & $3.51$ \\
$10^{15}$ & $10061753.29$ & $1.86$ & $18866284.73$ & $3.50$ \\
$10^{16}$ & $28738270.47$ & $1.74$ & $57669986.66$ & $3.50$ \\
$10^{17}$ & $87854839.94$ & $1.73$ & $176522092.15$ & $3.49$ \\
$10^{18}$ & $296082176.27$ & $1.90$ & $540876548.13$ & $3.48$ \\
$10^{19}$ & $756838941.68$ & $1.58$ & $1658551646.91$ & $3.46$ \\
$10^{20}$ & $2833677349.55$ & $1.92$ & $5088463711.51$ & $3.45$ \\
$10^{21}$ & $6455418097.84$ & $1.41$ & $15616220827.15$ & $3.43$ \\
$10^{22}$ & $26004576057.05$ & $1.85$ & $47928112273.66$ & $3.41$ \\
$10^{23}$ & $73877196562.03$ & $1.70$ & $147095889815.31$ & $3.38$ \\
$10^{24}$ & $232271215052.04$ & $1.72$ & $451306226910.06$ & $3.35$ \\
$10^{25}$ & $783545598025.84$ & $1.87$ & $1384015627524.11$ & $3.32$ \\
$10^{26}$ & $2036657012715.75$ & $1.57$ & $4241595234936.98$ & $3.28$ \\
$10^{27}$ & $7656067872952.33$ & $1.90$ & $12988339862084.01$ & $3.23$ \\
$10^{28}$ & $17619025700877.80$ & $1.41$ & $40421839917705.90$ & $3.24$ \\
$10^{29}$ & $71372189120343.00$ & $1.84$ & $126539989037161.00$ & $3.26$ \\
        \hline
        \end{tabular}
        
        \caption{Values of $\displaystyle \underline{\varepsilon}^{*}(x) =  li_{*}(x) - \underline{li}(x)$, $\displaystyle \frac{\underline{\varepsilon^{*}}(x)}{\sqrt{\displaystyle \frac{x}{\log(x)}}}$, $\displaystyle \overline{\varepsilon}^{*}(x) =  \overline{li}(x) - li_{*}(x)$, and $\displaystyle \frac{\overline{\varepsilon}^{*}(x)}{\sqrt{\displaystyle \frac{x}{\log(x)}}}$ for $x=10^1,\ldots,10^{29}$. Results are obtained with the Python program inside the Colab notebook (lifrac.ipynb) freely available at \cite{GomezPrimesGit}.}
        \label{table:li1-li2-2} 
    \end{table}

\subsection{$\displaystyle \frac{1}{2}$-Approximation Functions, $li(x)$, and $\pi(x)$}
    Finally, we study functions $li(x)$ and $\pi(x)$ in terms of functions $li_{-}(x)$ and $li_{+}(x)$.

    \begin{thm}
        \label{thm:varepsilon_li_1/2-li}
        $\displaystyle li(x) - f(x) \le li_{0}(x) \le \underline{li}(x)  \le li(x) \le \overline{li}(x) \le li_{1}(x) \le li(x) + f(x)$ for all real number $\displaystyle x \ge e$, with $f(x)=\underline{D}\sqrt{\frac{x}{\log(x)}} - 1.265692883422\ldots$ and $\underline{D} \approx 50.0182261266$.
    \end{thm}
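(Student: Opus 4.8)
The plan is to prove the seven-term chain one link at a time, each time reducing the inequality to a result already available. Two links are immediate: expanding Definitions \ref{def:li_-} and \ref{def:li_+} gives $\underline{li}(x)-li_{0}(x)=\underline{\alpha}\,\frac{x\underline{m}!}{\log^{\underline{m}+1}(x)}\ge 0$ (since $\underline{\alpha}\ge 0$) and $li_{1}(x)-\overline{li}(x)=(1-\overline{\alpha})\,\frac{x\overline{m}!}{\log^{\overline{m}+1}(x)}\ge 0$ (since $\overline{\alpha}<1$), hence $li_{0}(x)\le\underline{li}(x)$ and $\overline{li}(x)\le li_{1}(x)$.

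The central links $\underline{li}(x)\le li(x)\le\overline{li}(x)$ are the heart of the statement. Writing $\underline{li}(x)=li_{*}(x)-\underline{\varepsilon}^{*}(x)$, $\overline{li}(x)=li_{*}(x)+\overline{\varepsilon}^{*}(x)$ and using $|li(x)-li_{*}(x)|\le 1.265692883422\ldots$, it is enough to prove that
\[
\underline{\varepsilon}^{*}(x)\ \ge\ 1.265692883422\ldots\qquad\text{and}\qquad\overline{\varepsilon}^{*}(x)\ \ge\ 1.265692883422\ldots\qquad(x\ge e),
\]
because then $\underline{li}(x)\le li_{*}(x)-1.2656\ldots\le li(x)\le li_{*}(x)+1.2656\ldots\le\overline{li}(x)$. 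For $x\ge e$ one always has $\underline{m}<n$ and $\overline{m}>n$ (with $n=\lfloor\log(x)\rfloor$), so the definitions expand to the identities, with all summands nonnegative,
\[
\underline{\varepsilon}^{*}(x)=(1-\underline{\alpha})\frac{x\underline{m}!}{\log^{\underline{m}+1}(x)}+\frac{x}{\log(x)}\sum_{k=\underline{m}+1}^{n-1}\frac{k!}{\log^{k}(x)}+(\log(x)-n)\frac{xn!}{\log^{n+1}(x)},
\]
and $\overline{\varepsilon}^{*}(x)=\overline{\alpha}\frac{x\overline{m}!}{\log^{\overline{m}+1}(x)}+(n+1-\log(x))\frac{xn!}{\log^{n+1}(x)}+\frac{x}{\log(x)}\sum_{k=n+1}^{\overline{m}-1}\frac{k!}{\log^{k}(x)}$. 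I would bound each from below by splitting $[e,\infty)$ into a bounded ``head'' and an unbounded ``tail'' at an explicit threshold depending on $\underline{\kappa}$ (resp.\ $\overline{\kappa}$). In the tail I keep only one or two dominant terms and apply Corollary \ref{cor:StirlingKappaL}: combining $(1-\underline{\alpha})\frac{x\underline{m}!}{\log^{\underline{m}+1}(x)}$ with the $k=\underline{m}+1$ term and using $\underline{m}\le\underline{\kappa}\log(x)$ yields $\underline{\varepsilon}^{*}(x)\ge\underline{\kappa}\frac{x\underline{m}!}{\log^{\underline{m}+1}(x)}>\underline{\kappa}B_{\underline{\kappa}}\sqrt{x/\log(x)}$, and similarly $\overline{\varepsilon}^{*}(x)\ge\frac{x(\overline{m}-1)!}{\log^{\overline{m}}(x)}>\frac{1}{\overline{\kappa}}B_{\overline{\kappa}}\sqrt{x/\log(x)}$ (Corollary \ref{cor:StirlingKappaL} with $\kappa=\overline{\kappa}$ is legitimate for every $x\ge e$ because $\overline{\kappa}>1$); since $x/\log(x)$ increases on $[e,\infty)$, both right-hand sides exceed $1.2656\ldots$ beyond the threshold. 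In the head there remain only finitely many intervals, on each of which $(n,\underline{m})$ resp.\ $(n,\overline{m})$ is constant and $\underline{\varepsilon}^{*}$ resp.\ $\overline{\varepsilon}^{*}$ is a concrete elementary function of $\log(x)$; there I would discard all but the leading one or two terms of the partial sum and minimise the resulting closed form over the interval.

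The two outer links use the $\sqrt{x/\log(x)}$-estimates of Lemmas \ref{lem:varepsilon_1/2-} and \ref{lem:varepsilon_1/2+}. For $li_{1}(x)\le li(x)+f(x)$: $li_{1}(x)-li(x)=\varepsilon^{*}_{1}(x)+\bigl(li_{*}(x)-li(x)\bigr)\le\overline{D}\sqrt{x/\log(x)}+1.2656\ldots$, and since $\underline{D}-\overline{D}\approx 27.5$ and $\sqrt{x/\log(x)}\ge\sqrt{e}$ this is at most $\underline{D}\sqrt{x/\log(x)}-1.2656\ldots=f(x)$. For $li(x)-f(x)\le li_{0}(x)$: $li(x)-li_{0}(x)=\varepsilon^{*}_{0}(x)+\bigl(li(x)-li_{*}(x)\bigr)\le\varepsilon^{*}_{0}(x)+1.2656\ldots$, so I need the slightly sharper bound $\varepsilon^{*}_{0}(x)\le\underline{D}\sqrt{x/\log(x)}-2.54$. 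This is obtained by re-running the proof of Corollary \ref{cor:varepsilon_-} without discarding its slack: the factor $\sum_{k=1}^{n}\frac{1}{\log^{k}(x)}\prod_{i=1}^{k}(\underline{m}+i)$ stays below $2\frac{\log(x)+\underline{m}}{\log(x)-\underline{m}}$ by a margin whose product with $C_{\underline{\kappa}}x^{1/2}/\sqrt{\log(x)}$ stays above $2.54$ for all $x\ge e$ (the true ratio $\varepsilon^{*}_{0}(x)/\sqrt{x/\log(x)}$ never reaches $6$, whereas $\underline{D}\approx 50$; cf.\ Table \ref{table:li1-li2-1}), the finitely many small $x$ being verified directly.

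The main obstacle is the central step: establishing $\underline{\varepsilon}^{*}(x),\overline{\varepsilon}^{*}(x)\ge 1.265692883422\ldots$ uniformly for $x\ge e$. The Stirling-based estimates cover the tail comfortably, but near $x=e$ they do not apply and the bound must be checked by hand on the finitely many pieces; keeping track there of which terms the fractional offsets $\underline{\alpha}$, $\overline{\alpha}$, $\log(x)-n$ make small — so that enough positive mass survives — is the delicate part, and the $\overline{\varepsilon}^{*}$ pieces are the tightest.
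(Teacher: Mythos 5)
Your route is genuinely different from the paper's, and on the central point more careful. The paper proves this theorem in one line, citing Lemmas \ref{lem:varepsilon_1/2-} and \ref{lem:varepsilon_1/2+}, the bound $|\varepsilon(x)|\le 1.265692883422\ldots$, and $\overline{D}<\underline{D}$. Those ingredients give $li_{0}(x)\le\underline{li}(x)\le li_{*}(x)\le\overline{li}(x)\le li_{1}(x)$ and outer estimates with the constant \emph{added}, but they do not by themselves yield the middle links $\underline{li}(x)\le li(x)\le\overline{li}(x)$, since $li(x)$ may lie on either side of $li_{*}(x)$ by as much as $1.2656\ldots$; nor do they yield the leftmost link with the constant subtracted, as $f$ is written. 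You identified exactly this and reduced the middle links to the extra facts $\underline{\varepsilon}^{*}(x)\ge 1.2656\ldots$ and $\overline{\varepsilon}^{*}(x)\ge 1.2656\ldots$ for all $x\ge e$, to be proved by the Stirling lower bound (Corollary \ref{cor:StirlingKappaL}) in the tail and a finite interval check near $x=e$; your tail estimates $\underline{\kappa}B_{\underline{\kappa}}\sqrt{x/\log(x)}$ and $\overline{\kappa}^{-1}B_{\overline{\kappa}}\sqrt{x/\log(x)}$ are correct on the ranges where the terms you keep actually exist, and your treatment of the two outer links (the right one from $(\underline{D}-\overline{D})\sqrt{e}>2\cdot 1.2656\ldots$, the left one needing a sharpened constant) is faithful to the statement as printed. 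So your plan supplies precisely the ingredient the paper's citation-style proof glosses over.

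What keeps the proposal from being a finished proof is that the delicate parts are deferred rather than executed. For $\underline{\varepsilon}^{*}$ the head region extends all the way to $e^{1/\underline{\kappa}}\approx 212$ (Corollary \ref{cor:StirlingKappaL} with $\kappa=\underline{\kappa}$ requires $x\ge e^{1/\underline{\kappa}}$, and your ``$k=\underline{m}+1$'' term needs $\underline{m}+1\le n-1$); there $\underline{m}=0$ and $\underline{li}(x)=\underline{\kappa}x$, and the check $li_{*}(x)-\underline{\kappa}x\ge 1.2656\ldots$ must genuinely be carried out, keeping at least the $1/\log(x)$ term of $li_{*}$, because the crude bound $\frac{x}{\log(x)}-\underline{\kappa}x$ tends to $0$ as $x\to e^{1/\underline{\kappa}}$. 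More seriously, your justification of $\varepsilon^{*}_{0}(x)\le\underline{D}\sqrt{x/\log(x)}-2.54$ rests on slack in the step $\frac{\log(x)+\underline{m}}{\log(x)-\underline{m}}\le\frac{1+\underline{\kappa}}{1-\underline{\kappa}}$, but at every $x$ with $\underline{\kappa}\log(x)\in\mathbb{N}$ that step is an equality, so as stated the margin vanishes there; the needed $2.54$ has to be extracted from a different step (for instance the factor-$2$ geometric estimate of Lemma \ref{lem:sumprod_n-m_1}, or by reproving Corollary \ref{cor:varepsilon_-} with a constant $D'$ satisfying $(\underline{D}-D')\sqrt{e}\ge 2.54$). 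These are finite and plausible verifications, but as written they are promissory notes; the table data you invoke is evidence, not proof.
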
 
    \begin{proof}
        Follows from Lemmas \ref{lem:varepsilon_1/2-} and \ref{lem:varepsilon_1/2+}, $\overline{D} < \underline{D}$, and  $\displaystyle |\varepsilon(x)| \le 1.265692883422\ldots$ for all $x \ge e$ as shown by Gomez in \cite{gomez2024stieltjes}.
    \end{proof} 

    \begin{lem}
        \label{lem:li0pi}
        $\displaystyle li_{0}(x) \le \pi(x)$ for all $ e \le x < e^{\frac{4}{\underline{\kappa}}} \approx 2.09*10^{9}$  with $\displaystyle \underline{\kappa} \approx 0.18668231$ as defined in \ref{def:notation}.
    \end{lem}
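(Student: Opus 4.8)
The plan is a finite case analysis driven by the observation that the hypothesis $e \le x < e^{4/\underline{\kappa}}$ forces $\underline{m} = \lfloor \underline{\kappa}\log(x) \rfloor \in \{0,1,2,3\}$: indeed $\underline{\kappa}\log(x) < \underline{\kappa}\cdot(4/\underline{\kappa}) = 4$ on this range, while $\underline{\kappa} < 1$ gives $\underline{\kappa}\log(x) \ge \underline{\kappa} > 0$ already at $x = e$. For $j \in \{0,1,2,3\}$ set $I_j = \{\,x : j \le \underline{\kappa}\log(x) < j+1\,\} = [e^{j/\underline{\kappa}}, e^{(j+1)/\underline{\kappa}})$; since $\underline{\kappa} < 1$ we have $e^{0} < e < e^{1/\underline{\kappa}} < e^{2/\underline{\kappa}} < e^{3/\underline{\kappa}} < e^{4/\underline{\kappa}}$, so $I_0 \cap [e,\infty)$, $I_1$, $I_2$, $I_3$ partition $[e, e^{4/\underline{\kappa}})$, and on $I_j$ one has $\underline{m} = j$, hence $li_{0}(x) = li_{\underline{1/2},0}(x) = \frac{x}{\log(x)}\sum_{k=0}^{j-1}\frac{k!}{\log^{k}(x)}$ in closed form.

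It then remains to check $li_{0}(x) \le \pi(x)$ on each of the four pieces, and each reduces to a known unconditional lower bound for $\pi(x)$. On $[e, e^{1/\underline{\kappa}}) \approx [2.72, 212)$, $\underline{m}=0$ and the sum is empty, so $li_{0}(x) = 0 \le 1 \le \pi(x)$. On $[e^{1/\underline{\kappa}}, e^{2/\underline{\kappa}}) \approx [212, 4.5\times 10^{4})$, $\underline{m}=1$ and $li_{0}(x) = \frac{x}{\log(x)}$, so the claim is exactly the Rosser--Schoenfeld inequality $\pi(x) > \frac{x}{\log(x)}$ valid for $x \ge 17$. On $[e^{2/\underline{\kappa}}, e^{3/\underline{\kappa}}) \approx [4.5\times 10^{4}, 9.5\times 10^{6})$, $\underline{m}=2$ and $li_{0}(x) = \frac{x}{\log(x)}\bigl(1 + \frac{1}{\log(x)}\bigr)$, so the claim is Dusart's inequality $\pi(x) \ge \frac{x}{\log(x)}\bigl(1 + \frac{1}{\log(x)}\bigr)$ valid for $x \ge 599$. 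On $[e^{3/\underline{\kappa}}, e^{4/\underline{\kappa}}) \approx [9.5\times 10^{6}, 2.09\times 10^{9})$, $\underline{m}=3$ and $li_{0}(x) = \frac{x}{\log(x)}\bigl(1 + \frac{1}{\log(x)} + \frac{2}{\log^{2}(x)}\bigr)$, so the claim is Dusart's three-term inequality $\pi(x) \ge \frac{x}{\log(x)}\bigl(1 + \frac{1}{\log(x)} + \frac{2}{\log^{2}(x)}\bigr)$ valid for $x \ge 88\,789$.

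The only thing requiring care is verifying that the threshold of each cited estimate lies below the left endpoint of the window on which it is used; this holds since $212 \ge 17$, $4.5\times10^{4} \ge 599$, and $9.5\times10^{6} \ge 88\,789$, so all four cases close and the $I_j$ exhaust $[e, e^{4/\underline{\kappa}})$. I do not expect a genuine obstacle: the content is entirely (a) pinning down the four values of $\underline{m}$ with the corresponding closed forms of $li_{0}$, and (b) invoking the appropriate explicit prime-counting bound in each window. If one prefers to cite a single estimate, one may instead use the three-term Dusart bound for $x \ge 88\,789$ and dispatch $[e, 88\,789]$ by a finite verification, but the split above is cleaner.

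It is worth recording why the argument stops precisely at $e^{4/\underline{\kappa}}$: for $x \ge e^{4/\underline{\kappa}}$ one has $\underline{m} \ge 4$, and the inequality would require a bound of the form $\pi(x) \ge \frac{x}{\log(x)}\bigl(1 + \frac{1}{\log(x)} + \frac{2}{\log^{2}(x)} + \frac{6}{\log^{3}(x)}\bigr)$, i.e.\ with the full asymptotic coefficient $3! = 6$ on the cubic term, which exceeds the best unconditionally provable constant on that term at moderate $x$; this is exactly where the statement passes into the conjectural regime. (The alternative route of bounding $li_{0}(x)$ below $li(x)$ through $li_{*}(x)$ and $|\varepsilon(x)| \le 1.265\ldots$ and then appealing to $\pi(x)\approx li(x)$ does not extend the range either, since $li(x) - \pi(x)$ eventually exceeds $li_{*}(x) - li_{0}(x)$; this again reflects the finiteness of the unconditional interval.)
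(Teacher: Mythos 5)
Your proposal is correct and follows essentially the same route as the paper: the same partition of $[e, e^{4/\underline{\kappa}})$ into the four windows where $\underline{m}=0,1,2,3$, the same closed forms of $li_{0}(x)$ on each window, and the same explicit lower bounds for $\pi(x)$ (the $x/\log(x)$ bound for $x\ge 17$ and Dusart's two- and three-term inequalities for $x\ge 599$ and $x\ge 88789$). Your additional check that each cited threshold lies below the left endpoint of its window is the same verification the paper relies on implicitly.
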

    \begin{proof}
        $\displaystyle li_{0}(x)=0$  and $\pi(x) \ge 1$ for all $\displaystyle e \le x < e^{\frac{1}{\underline{\kappa}}} \approx 212.02471377$. Also, $\displaystyle li_{0}(x)=\frac{x}{\log(x)}$ for all $\displaystyle e^{\frac{1}{\underline{\kappa}}} \le x < e^{\frac{2}{\underline{\kappa}}} \approx 44954.4792$ and $\displaystyle \frac{x}{\log(x)} < \pi(x)$ for all $x\ge 17$ (see \cite{10.1215/ijm/1255631807}, page 117 in \cite{narkiewicz2000} and Corollary 5.2, Equation (5.2) in \cite{Dusart2018}). Now, $\displaystyle li_{0}(x)=\frac{x}{\log(x)}\left(1+\frac{1}{\log(x)}\right)$ for all $\displaystyle e^{\frac{2}{\underline{\kappa}}} \le x < e^{\frac{3}{\underline{\kappa}}} \approx 9531460.5955$ and $\displaystyle \frac{x}{\log(x)}\left(1+\frac{1}{\log(x)}\right) < \pi(x)$ for all $x \ge 599$ (see Corollary 5.2, Equation (5.3) in \cite{Dusart2018}). Finally, $\displaystyle li(x)=\frac{x}{\log(x)}\left(1+\frac{1}{\log(x)}+\frac{2}{\log^{2}(x)}\right)$ for all $\displaystyle e^{\frac{3}{\underline{\kappa}}} \le x < e^{\frac{4}{\underline{\kappa}}} \approx 2.09*10^{9}$ and $\displaystyle \frac{x}{\log(x)}\left(1+\frac{1}{\log(x)}+\frac{2}{\log^{2}(x)}\right) < \pi(x)$ for all $x \ge 88789$ (see Corollary 5.2, Equation (5.4) in \cite{Dusart2018}). 
    \end{proof}

    \begin{lem}
        \label{lem:li_pi}
        $\displaystyle \underline{li}(x) \le \pi(x)$ for all $ e \le x < e^{\frac{3}{\underline{\kappa}}} \approx 9531460.5955$  with $\displaystyle \underline{\kappa} \approx 0.18668231$.
    \end{lem}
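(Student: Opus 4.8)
The plan is to follow the template of the proof of Lemma~\ref{lem:li0pi}, partitioning $[e,e^{3/\underline{\kappa}})$ according to $\underline{m}=\lfloor\underline{\kappa}\log(x)\rfloor$, which equals $j$ exactly on $[e^{j/\underline{\kappa}},e^{(j+1)/\underline{\kappa}})$ for $j=0,1,2$ (here $e^{1/\underline{\kappa}}\approx 212.02$, $e^{2/\underline{\kappa}}\approx 44954.48$, $e^{3/\underline{\kappa}}\approx 9531460.60$). The only difference with Lemma~\ref{lem:li0pi} is that now the fractional weight $\underline{\alpha}=\underline{\kappa}\log(x)-\underline{m}\in[0,1)$ need not vanish, so $\underline{li}(x)$ is not a truncated series but sits between two consecutive truncations; the inequality I would use is $\underline{li}(x)<\frac{x}{\log(x)}\sum_{k=0}^{\underline{m}}\frac{k!}{\log^{k}(x)}$, which follows from $\underline{\alpha}<1$. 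Against this upper bound I would apply the explicit lower bounds for $\pi(x)$ already used in Lemma~\ref{lem:li0pi} --- namely $\frac{x}{\log(x)}<\pi(x)$ for $x\ge 17$, $\frac{x}{\log(x)}\left(1+\frac{1}{\log(x)}\right)<\pi(x)$ for $x\ge 599$, and $\frac{x}{\log(x)}\left(1+\frac{1}{\log(x)}+\frac{2}{\log^{2}(x)}\right)<\pi(x)$ for $x\ge 88789$ (Corollary~5.2 in \cite{Dusart2018}) --- supplemented by $\frac{x}{\log(x)}\left(1+\frac{1}{2\log(x)}\right)<\pi(x)$ for $x\ge 59$ from \cite{10.1215/ijm/1255631807} and $\frac{x}{\log(x)-1}<\pi(x)$ for $x\ge 5393$ (\cite{Dusart2018}).

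For $\underline{m}=0$ the empty sum together with $\underline{\alpha}=\underline{\kappa}\log(x)$ gives $\underline{li}(x)=\underline{\kappa}x$, and since $x<e^{1/\underline{\kappa}}$ forces $\underline{\kappa}<1/\log(x)$ one gets $\underline{li}(x)<\frac{x}{\log(x)}<\pi(x)$ for $17\le x<e^{1/\underline{\kappa}}$, while on the residual $[e,17)$ one checks the six intervals between consecutive primes $\le 17$: on each, $\underline{\kappa}x$ is strictly below $\underline{\kappa}$ times the right endpoint, which in turn is at most the locally constant value of $\pi$. For $\underline{m}=1$, $\underline{li}(x)=\frac{x}{\log(x)}\left(1+\frac{\underline{\alpha}}{\log(x)}\right)$ with $\underline{\alpha}<1$, so $\underline{li}(x)<\frac{x}{\log(x)}\left(1+\frac{1}{\log(x)}\right)<\pi(x)$ for $599\le x<e^{2/\underline{\kappa}}$; on the residual $[e^{1/\underline{\kappa}},599)$ one uses $\frac{\underline{\alpha}}{\log(x)}=\underline{\kappa}-\frac{1}{\log(x)}<\frac{1}{2\log(x)}$ (valid because $\log(x)<\frac{3}{2\underline{\kappa}}$ there), hence $\underline{li}(x)<\frac{x}{\log(x)}\left(1+\frac{1}{2\log(x)}\right)<\pi(x)$ for $x\ge 59$. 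For $\underline{m}=2$, $\underline{li}(x)=\frac{x}{\log(x)}\left(1+\frac{1}{\log(x)}+\frac{2\underline{\alpha}}{\log^{2}(x)}\right)$; when $88789\le x<e^{3/\underline{\kappa}}$ the bound $\underline{\alpha}<1$ gives $\underline{li}(x)<\frac{x}{\log(x)}\left(1+\frac{1}{\log(x)}+\frac{2}{\log^{2}(x)}\right)<\pi(x)$, while on the residual $[e^{2/\underline{\kappa}},88789)$ one has $\underline{\alpha}=\underline{\kappa}\log(x)-2<\underline{\kappa}\log(88789)-2<\frac{1}{2}$, so $\underline{li}(x)<\frac{x}{\log(x)}\left(1+\frac{1}{\log(x)}+\frac{1}{\log^{2}(x)}\right)<\frac{x}{\log(x)}\sum_{j\ge 0}\frac{1}{\log^{j}(x)}=\frac{x}{\log(x)-1}<\pi(x)$ since $e^{2/\underline{\kappa}}>5393$.

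The main obstacle is the trio of residual sub-intervals $[e,17)$, $[e^{1/\underline{\kappa}},599)$, $[e^{2/\underline{\kappa}},88789)$, where the off-the-shelf bound for $\pi(x)$ with the matching number of terms has not yet taken effect. What rescues each of them is that $\underline{\alpha}$ is small near the left end of its $\underline{m}$-range: one verifies the cheap estimates $\underline{\alpha}<\frac{1}{2}$ on each residual interval (and, for $\underline{m}=1$, the sharper $\frac{\underline{\alpha}}{\log(x)}<\frac{1}{2\log(x)}$), so that $\underline{li}(x)$ is only a mild perturbation of the previous range's approximation, which a weaker explicit inequality --- one with a smaller threshold, $x\ge 59$ or $x\ge 5393$ --- already dominates; establishing these $\underline{\alpha}$-estimates (equivalently, comparing $\log(599)$ and $\log(88789)$ with the relevant multiples of $1/\underline{\kappa}$) is the only genuine computation. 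Alternatively, each residual interval is a finite set of integers, so the inequality there can be certified directly by the companion program \cite{GomezPrimesGit}; I would nonetheless include the analytic argument, keeping the proof self-contained exactly as in Lemma~\ref{lem:li0pi}.
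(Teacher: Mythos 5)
Your proposal is correct and follows essentially the same route as the paper: partition $[e,e^{3/\underline{\kappa}})$ by $\underline{m}\in\{0,1,2\}$, compare $\underline{li}(x)$ against explicit Dusart-type lower bounds for $\pi(x)$ with the matching number of terms, and on the residual subintervals where those thresholds have not yet kicked in exploit the smallness of $\underline{\alpha}$ together with weaker explicit bounds of lower threshold. The only cosmetic differences are the auxiliary estimates chosen there (your $1+\frac{1}{2\log x}$ for $x\ge 59$ and $\frac{x}{\log x-1}$ for $x\ge 5393$, versus the paper's $1+\frac{0.5}{\log x}$ for $x\ge 59$ and $1+\frac{1}{\log x}+\frac{1.8}{\log^{2}x}$ for $x\ge 32299$) and the finite verification on $[e,17)$ via prime gaps instead of the paper's spot checks at $5$, $16$, $32$.
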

    \begin{proof}
        $\displaystyle \underline{li}(x)=\underline{\kappa}x$ for all $\displaystyle e \le x < e^{\frac{1}{\underline{\kappa}}} \approx 212.02471$. Then, $\displaystyle \underline{li}(x) \le \underline{li}(5) \approx 0.93341 < 1 = \pi(2) \le \pi(x)$ for all $e \le x \le 5$. Moreover, $\displaystyle \underline{li}(x) \le \underline{li}(16) \approx 2.98692 < 3 = \pi(5) \le \pi(x)$ for all $5 \le x \le 16$. Also, $\displaystyle \underline{li}(x) \le \underline{li}(32) \approx 5.97383 < 6 = \pi(16) \le \pi(x)$ for all $16 \le x \le 32$. Now, $\displaystyle \underline{li}(x) < f(x) = \frac{x}{\log(x)}$ for all $\displaystyle e \le x < e^{\frac{1}{\underline{\kappa}}} \approx 212.02471$ and $\displaystyle f(x) < \pi(x)$ for all $x \ge 17$ (see \cite{10.1215/ijm/1255631807} and Corollary 5.2, Equation (5.2) in \cite{Dusart2018}). Next, $\displaystyle \underline{li}(x)=\frac{x}{\log(x)}\left(1+\frac{\underline{\kappa}\log(x)-1}{\log(x)}\right)$ for all $\displaystyle e^{\frac{1}{\underline{\kappa}}} \le x < e^{\frac{2}{\underline{\kappa}}} \approx 44954.4792$. Then, $\displaystyle \underline{li}(x) \le \frac{x}{\log(x)} \left(1+\frac{\underline{\kappa}\log(599)-1}{\log(x)}\right) \approx \frac{x}{\log(x)} \left(1+\frac{0.19388}{\log(x)}\right)$ for all $\displaystyle e^{\frac{1}{\underline{\kappa}}} \le x < 599$ and $\displaystyle \frac{x}{\log(x)}\left(1+\frac{0.5}{\log(x)}\right) < \pi(x)$ for all $x \ge 59$ (see page 13 in \cite{Dusart1998}). Moreover, $\displaystyle \underline{li}(x) \le f(x) = \frac{x}{\log(x)}\left(1+\frac{1}{\log(x)}\right)$ for all $\displaystyle e^{\frac{1}{\underline{\kappa}}} \le x < e^{\frac{2}{\underline{\kappa}}}$ and $\displaystyle f(x) < \pi(x)$ for all $x \ge 599$ (see Corollary 5.2, Equation (5.3) in \cite{Dusart2018}). Finally, $\displaystyle li_{0}(x)=\frac{x}{\log(x)}\left(1+\frac{1}{\log(x)}+\frac{2(\underline{\kappa}\log(x)-2)}{\log^{2}(x)}\right)$ for all $\displaystyle e^{\frac{2}{\underline{\kappa}}} \le x < e^{\frac{3}{\underline{\kappa}}}$. Then, $\displaystyle \underline{li}(x) \le \frac{x}{\log(x)} \left(1+\frac{1}{\log(x)}+\frac{2(\underline{\kappa}\log(88789)-2)}{\log^{2}(x)}\right) \approx \frac{x}{\log(x)} \left(1+\frac{1}{\log(x)}+\frac{0.25412}{\log^{2}(x)}\right)$ for all $\displaystyle e^{\frac{2}{\underline{\kappa}}} \le x < 88789$ and we have that $\displaystyle \frac{x}{\log(x)}\left(1+\frac{1}{\log(x)}+\frac{1.8}{\log^{2}(x)}\right) < \pi(x)$ for all $x \ge 32299$ (Theorem 1.10 part  (6) in \cite{Dusart1998}). Moreover, we have that $\displaystyle \underline{li}(x) \le f(x) = \frac{x}{\log(x)}\left(1+\frac{1}{\log(x)}+\frac{2}{\log^{2}(x)}\right)$ for all $\displaystyle e^{\frac{2}{\underline{\kappa}}} \le x < e^{\frac{3}{\underline{\kappa}}}$ and $\displaystyle f(x) < \pi(x)$ for all $x \ge 88789$ (see Corollary 5.2, Equation (5.4) in \cite{Dusart2018}). 
    \end{proof}

    First lines in proof of Lemma \ref{lem:li_pi} consider intervals $\displaystyle [x_i, x_{i+1}]$ such that $\displaystyle x_{i+1}=\max_{n>x_i}\{\underline{li}(n) \le \pi(x_i)\}$ with $x_0=2$. This process can be generalized as follows:

    \begin{lem}
        \label{lem:intervals}
        Let $X \ge e$ be a real number.  If there exists $I \in \mathbb{N}$ such that $\displaystyle x_{i+1}=\max_{n>x_i}\{\underline{li}(n) \le \pi(x_i)\}$ for all $i=0,\ldots, I$ and $X \le x_{I+1}$ with $x_{0} = 2$ then $\displaystyle \underline{li}(x) \le \pi(x)$ for all $e \le x \le X$.
    \end{lem}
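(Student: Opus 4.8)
The plan is to reduce the statement to a short chaining argument over the finitely many intervals $[x_{i},x_{i+1}]$. Since $x_{0}=2<e$, each $x_{i+1}>x_{i}$ (the maximum defining $x_{i+1}$ is taken over $n>x_{i}$), and $X\le x_{I+1}$, the intervals $[x_{i},x_{i+1}]$, $i=0,\dots,I$, cover $[x_{0},x_{I+1}]\supseteq[e,X]$. Hence it suffices to show that for each $i\in\{0,\dots,I\}$ and each real $x$ with $\max(e,x_{i})\le x\le x_{i+1}$ one has $\underline{li}(x)\le\pi(x)$. The two ingredients will be the monotonicity of $\pi$ (immediate) and the monotonicity of $\underline{li}$ on $[e,\infty)$.

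The one substantive point is therefore to prove that $\underline{li}$ is continuous and non-decreasing on $[e,\infty)$. I would argue block by block. On the first block $e\le x<e^{1/\underline{\kappa}}$ one has $\underline{m}=0$ and $\underline{li}(x)=\underline{\kappa}x$, which is increasing. On a block where $\underline{m}=\lfloor\underline{\kappa}\log(x)\rfloor$ equals a fixed integer $j\ge1$ one has $\underline{\kappa}\log(x)\ge j$, and unfolding the definition and reindexing the sum gives
\[
\underline{li}(x)=\frac{j!\,x}{\log^{j+1}(x)}\bigl(\underline{\kappa}\log(x)-j\bigr)+\sum_{k=1}^{j}\frac{(k-1)!\,x}{\log^{k}(x)},
\]
a sum of non-negative terms; each term of the sum is increasing because $\log(x)\ge j/\underline{\kappa}>j\ge k$, and the first term has derivative $\frac{j!}{\log^{j+2}(x)}\,q(\log(x))$ with $q(u)=\underline{\kappa}u^{2}-j(\underline{\kappa}+1)u+j(j+1)$, which is positive for $u\ge j/\underline{\kappa}$: indeed $q$ opens upward, its vertex $\frac{j(\underline{\kappa}+1)}{2\underline{\kappa}}$ lies to the left of $j/\underline{\kappa}$ because $\underline{\kappa}<1$, and $q(j/\underline{\kappa})=j>0$. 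Finally, at a block boundary $x=e^{j/\underline{\kappa}}$ the one-sided limits of $\underline{li}$ both equal $\frac{x}{\log(x)}\sum_{k=0}^{j-1}\frac{k!}{\log^{k}(x)}$ --- the downward jump of the fractional term exactly cancels the upward jump of the truncated series --- so $\underline{li}$ is continuous there. Assembling the pieces, $\underline{li}$ is non-decreasing on all of $[e,\infty)$.

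Granting this, fix $i$ and $x$ with $\max(e,x_{i})\le x\le x_{i+1}$. By hypothesis $x_{i+1}$ exists and realizes $\max_{n>x_{i}}\{\underline{li}(n)\le\pi(x_{i})\}$, so in particular $\underline{li}(x_{i+1})\le\pi(x_{i})$. Then $x\le x_{i+1}$ together with the monotonicity of $\underline{li}$ gives $\underline{li}(x)\le\underline{li}(x_{i+1})$, while $x\ge x_{i}$ and the monotonicity of $\pi$ give $\pi(x)\ge\pi(x_{i})$; hence
\[
\underline{li}(x)\le\underline{li}(x_{i+1})\le\pi(x_{i})\le\pi(x).
\]
Since $x$ ranged over all of $[e,X]$, this proves the lemma. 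The only step needing real work is the monotonicity of $\underline{li}$, which I expect to be the main obstacle; note that it would in fact be enough to prove the weaker claim that $\underline{li}$ attains its supremum over each $[x_{i},x_{i+1}]$ at the right endpoint, which follows from the same block estimates without having to track the half-open boundaries.
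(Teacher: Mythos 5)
Your proposal is correct and takes essentially the same route as the paper: locate $x$ in an interval $(x_i,x_{i+1}]$ and chain $\underline{li}(x)\le\underline{li}(x_{i+1})\le\pi(x_i)\le\pi(x)$ using the defining property of $x_{i+1}$ and monotonicity of $\pi$. The paper's proof simply asserts the first inequality (``Clearly''), whereas you supply the one ingredient it leaves implicit --- the block-by-block verification that $\underline{li}$ is continuous and non-decreasing on $[e,\infty)$ --- and that verification (the reindexed formula on a block with $\underline{m}=j$, the positivity of $q(u)=\underline{\kappa}u^{2}-j(\underline{\kappa}+1)u+j(j+1)$ for $u\ge j/\underline{\kappa}$, and the cancellation of jumps at $x=e^{j/\underline{\kappa}}$) checks out.
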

    \begin{proof}
        If $e \le x \le X$ then there is $i=0,\ldots, I$ such that $x_{i} < x \le x_{i+1}$. Clearly, $\underline{li}(x) \le \underline{li}(x_{i+1}) \le \pi(x_{i}) \le \pi(x)$. 
    \end{proof}

    \begin{cor}
        \label{cor:li_pi-program}
        $\displaystyle \underline{li}(x) \le \pi(x)$ for all $ e \le x < 2.09*10^{9}$.
    \end{cor}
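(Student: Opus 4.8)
The plan is to read this statement off Lemma \ref{lem:intervals}, using it as the explicit, computer-verified continuation of Lemma \ref{lem:li_pi}: the analytic argument there already reaches $e^{3/\underline{\kappa}} \approx 9.53\times10^{6}$, and only the last regime $\underline{m}=3$, i.e.\ $e^{3/\underline{\kappa}} \le x < e^{4/\underline{\kappa}} \approx 2.09\times10^{9}$, remains. By Lemma \ref{lem:intervals} it suffices to produce a finite chain of reals $2 = x_{0} < x_{1} < \cdots < x_{I+1}$ with $x_{i+1} = \max_{n > x_{i}}\{\underline{li}(n) \le \pi(x_{i})\}$ for every $i = 0,\dots,I$ and $x_{I+1} \ge 2.09\times10^{9}$.

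First I would build this chain greedily, which is exactly what the program in \cite{GomezPrimesGit} does. Starting from $x_{0} = 2$, at step $i$ one computes the exact value $\pi(x_{i})$ and then takes $x_{i+1} = {\underline{li}}^{-1}\!\big(\pi(x_{i})\big)$; this is well defined because $\underline{li}$ is continuous, strictly increasing and unbounded, so $\{\,n > x_{i} : \underline{li}(n) \le \pi(x_{i})\,\}$ is a bounded interval whose maximum is attained. Evaluating $\underline{li}$ is elementary once one tracks the regime index $\underline{m} = \lfloor \underline{\kappa}\log(n) \rfloor$, which changes only at the thresholds $e^{j/\underline{\kappa}}$; inside each regime $\underline{li}$ is a smooth increasing function of $n$, so $x_{i+1}$ is obtained by a binary search. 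One iterates until $x_{I+1} \ge 2.09\times10^{9}$ and verifies that each step is strictly productive ($x_{i+1} > x_{i}$, equivalently $\underline{li}(x_{i}) < \pi(x_{i})$ at the visited point), which makes the chain increasing and hence finite. The only substantial ingredient is the exact prime counting at arguments up to roughly $2.1\times10^{9}$; this is handled once by a segmented sieve of Eratosthenes maintaining a running count, after which the $x_{i}$ are read off.

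A purely analytic alternative is possible: on $e^{3/\underline{\kappa}} \le x < e^{4/\underline{\kappa}}$ one has $\underline{li}(x) = \frac{x}{\log(x)}\left(1 + \frac{1}{\log(x)} + \frac{2}{\log^{2}(x)} + \frac{6\underline{\alpha}}{\log^{3}(x)}\right)$ with $0 \le \underline{\alpha} < 1$, hence $\underline{li}(x) < \frac{x}{\log(x)}\left(1 + \frac{1}{\log(x)} + \frac{2}{\log^{2}(x)} + \frac{6}{\log^{3}(x)}\right)$ there; combining an explicit lower bound of Dusart type for $\pi(x)$ with a $\frac{1}{\log^{3}(x)}$ term (valid beyond some threshold $x_{0}'$) with a finite check via Lemma \ref{lem:intervals} on $[\,e^{3/\underline{\kappa}},\, x_{0}'\,]$ would finish it, in the style of the proof of Lemma \ref{lem:li0pi}. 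I would still favour the single greedy computation, which also absorbs the mildly erratic behaviour of $\pi(x)$ and $\underline{li}(x)$ for small $x$.

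The hard part is not conceptual but one of scale and trust: the proof is a finite verification, so its soundness depends on the correctness of the prime-counting computation up to $\approx 2.09\times10^{9}$ and of the $\underline{li}$ evaluations, which is why the result is credited to the publicly available notebook \cite{GomezPrimesGit}. A minor bookkeeping point is to apply Lemma \ref{lem:intervals} with the correct regime index $\underline{m}$ whenever a chain endpoint $x_{i+1}$ happens to fall near one of the thresholds $e^{j/\underline{\kappa}}$.
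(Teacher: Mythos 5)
Your proposal matches the paper's proof: the paper simply invokes Lemma \ref{lem:intervals} together with the C++ program in \cite{GomezPrimesGit}, which builds exactly the greedy chain you describe and reports $I=13408$ with $x_{I+1}=2090132958 \ge 2.09\times 10^{9}$. Your extra remarks (binary search within a regime, segmented sieve for $\pi$, the alternative Dusart-style analytic finish) are reasonable elaborations but not part of, nor needed beyond, the paper's one-line computational argument.
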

    \begin{proof}
        We use the C++ program inside the Colab note (lifrac-c++.ipynb) freely available at \cite{GomezPrimesGit} which uses Lemma \ref{lem:intervals}. We obtain $I=13408$ and $x_{I+1} = 2090132958$.
    \end{proof}
    
    \begin{lem}
        \label{lem:li2pi}
        $\displaystyle  \pi(x) \le \overline{li}(x) \le li_{1}(x)$ for all $e \le x \le 1.39*10^{17}$.
    \end{lem}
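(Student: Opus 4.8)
The inequality $\overline{li}(x)\le li_{1}(x)$ is immediate from Definition~\ref{def:li_+}, since the two functions share the same $\overline{m}$ and the same partial sum $\sum_{k=0}^{\overline{m}-1}k!/\log^{k}(x)$ and differ only in the coefficient of the $\overline{m}$-th term, namely $\overline{\alpha}<1$ for $\overline{li}$ versus $1$ for $li_{1}$. The real task is $\pi(x)\le\overline{li}(x)$ on $[e,\,1.39\cdot10^{17}]$, and I would split this range at $X_{1}=1394$ (the point where $\log(x)=7.2411\ldots$), treating the two pieces separately.

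On $[e,X_{1}]$ I would use a direct explicit comparison, in the spirit of Lemmas~\ref{lem:li0pi}--\ref{lem:intervals}. For $e\le x<e^{3/\overline{\kappa}}\approx4.02$ one has $\overline{m}\ge2$, so $\overline{li}(x)\ge\frac{x}{\log(x)}\bigl(1+\frac{1}{\log(x)}\bigr)\ge\frac{x}{\log(x)}\ge e>2\ge\pi(x)$. For $e^{3/\overline{\kappa}}\le x\le X_{1}$ one has $\overline{m}\ge3$, hence $\overline{li}(x)\ge\frac{x}{\log(x)}\bigl(1+\frac{1}{\log(x)}+\frac{2}{\log^{2}(x)}\bigr)\ge\frac{x}{\log(x)}\bigl(1+\frac{1.2762}{\log(x)}\bigr)\ge\pi(x)$, where the middle step is just $\frac{2}{\log^{2}(x)}\ge\frac{0.2762}{\log(x)}$ for $\log(x)\le7.2411\ldots$ and the last step is the standard explicit upper bound $\pi(x)<\frac{x}{\log(x)}\bigl(1+\frac{1.2762}{\log(x)}\bigr)$ (see \cite{Dusart2018,Dusart1998}).

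On $X_{1}<x\le1.39\cdot10^{17}$ I would route the estimate through $li(x)$. I would first record that $\overline{m}=\lfloor\overline{\kappa}\log(x)\rfloor\ge n+1$ for every $x\ge e$, because $\overline{\kappa}\log(x)\ge\overline{\kappa}n>2n\ge n+1$ when $n=\lfloor\log(x)\rfloor\ge1$ (recall $\overline{\kappa}>2$). Hence $\overline{li}(x)\ge\frac{x}{\log(x)}\sum_{k=0}^{\overline{m}-1}\frac{k!}{\log^{k}(x)}$ already contains all of $\frac{x}{\log(x)}\sum_{k=0}^{n}\frac{k!}{\log^{k}(x)}\ge li_{*}(x)$ (Lemma~\ref{lem:Stieltjes}), so the remainder $\overline{\varepsilon}^{*}(x)=\overline{li}(x)-li_{*}(x)$ keeps at least one further term: if $\overline{m}\ge n+2$ then $\overline{\varepsilon}^{*}(x)\ge\frac{x(\overline{m}-1)!}{\log^{\overline{m}}(x)}=\frac{\log(x)}{\overline{m}}\cdot\frac{x\,\overline{m}!}{\log^{\overline{m}+1}(x)}>\frac{B_{\overline{\kappa}}}{\overline{\kappa}}\sqrt{\frac{x}{\log(x)}}$ by Corollary~\ref{cor:StirlingKappaL} (using $\overline{\kappa}(1-\log(\overline{\kappa}))=\tfrac12$ and $\overline{m}\le\overline{\kappa}\log(x)$), while if $\overline{m}=n+1$ then $\overline{\alpha}=\overline{\kappa}\log(x)-(n+1)\ge\overline{\kappa}n-n-1\ge\overline{\kappa}-2>0$, so $\overline{\varepsilon}^{*}(x)\ge\overline{\alpha}\frac{x\,\overline{m}!}{\log^{\overline{m}+1}(x)}>(\overline{\kappa}-2)B_{\overline{\kappa}}\sqrt{\frac{x}{\log(x)}}$. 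Either way $\overline{\varepsilon}^{*}(x)>(\overline{\kappa}-2)B_{\overline{\kappa}}\sqrt{x/\log(x)}\ge1.265692883422\ldots$ once $x/\log(x)$ exceeds a small explicit constant, in particular for all $x>X_{1}$. Combining this with $|li(x)-li_{*}(x)|\le1.265692883422\ldots$ from \cite{gomez2024stieltjes} yields
\[ \overline{li}(x)=li_{*}(x)+\overline{\varepsilon}^{*}(x)\ \ge\ \bigl(li(x)-1.265692883422\ldots\bigr)+1.265692883422\ldots\ =\ li(x), \]
and since $\pi(x)<li(x)$ is known unconditionally for $2\le x\le1.39\cdot10^{17}$ (this being exactly the estimate that fixes the right endpoint of the lemma; see \cite{narkiewicz2000} and the numerical verifications discussed there), one gets $\pi(x)\le\overline{li}(x)$ on the whole range.

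I expect the third block to be the crux: one must convert the $\Theta(\sqrt{x/\log(x)})$ size of $\overline{\varepsilon}^{*}(x)$ (consistent with Corollary~\ref{cor:varepsilon_+}) into the constant threshold $\overline{\varepsilon}^{*}(x)\ge1.265692883422\ldots$ with a cutoff $X_{1}$ small enough that $[e,X_{1}]$ remains finitely checkable, and the strength of the whole upper piece is only that of the cited unconditional inequality $\pi(x)<li(x)$, which is precisely why the lemma stops at $1.39\cdot10^{17}$ rather than further.
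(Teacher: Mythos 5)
Your proposal is correct, and its skeleton coincides with the paper's: both reduce the lemma to the unconditional fact that $\pi(x) < li(x)$ for $2 < x \le 1.39\cdot10^{17}$ together with a comparison $li(x)$ (or $\pi(x)$ directly) $\le \overline{li}(x) \le li_{1}(x)$, the last inequality being immediate from $\overline{\alpha} < 1$. The difference is in how the middle comparison is obtained. The paper simply invokes its chain $li(x) \le \overline{li}(x) \le li_{1}(x)$ from Theorem \ref{thm:varepsilon_li_1/2-li} and cites Corollary 1 of \cite{Platt2016} for $\pi(x) < li(x)$, so its proof is two lines. You instead re-derive the needed comparison: for $x$ beyond a modest cutoff you prove the quantitative lower bound $\overline{\varepsilon}^{*}(x) = \overline{li}(x) - li_{*}(x) \ge (\overline{\kappa}-2)B_{\overline{\kappa}}\sqrt{x/\log(x)} \ge 1.265692\ldots$ via Corollary \ref{cor:StirlingKappaL} (splitting on $\overline{m} = n+1$ versus $\overline{m} \ge n+2$), which absorbs the Stieltjes error constant and yields $\overline{li}(x) \ge li(x)$ there, while on the initial segment you compare $\pi(x)$ with $\overline{li}(x)$ directly through Dusart-type explicit bounds, bypassing $li(x)$ altogether. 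Your route is longer but more self-contained: in particular it supplies exactly the quantitative step (a lower bound on $\overline{\varepsilon}^{*}$ strong enough to dominate the constant $1.2656\ldots$) that the paper's appeal to Theorem \ref{thm:varepsilon_li_1/2-li} leaves implicit, at the cost of extra case analysis and reliance on Dusart's estimates for small $x$. One small correction: the bound $\pi(x) < li(x)$ up to $1.39\cdot10^{17}$ should be attributed to \cite{Platt2016} (as the paper does), not to \cite{narkiewicz2000}, which predates that computation.
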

    \begin{proof}
        Follows from $li(x) \le \overline{li}(x) \le li_{1}(x)$ and $\pi(x) < li(x)$ for all $2 < x \le 1.39*10^{17}$ (see Corollary 1 in \cite{Platt2016}).
    \end{proof}
    
    Table \ref{table:li1/2-pi} compares values of the difference between $\displaystyle \frac{1}{2}$-asymptotic approximation functions and function $\displaystyle \pi(x)$ for $n=10^2,\ldots,10^{29}$. Notice that function $\pi(x)$ is bounded by $\underline{li}(x)$ and $\overline{li}(x)$ for the reported values of $x$. This behavior along results in Lemmas \ref{lem:li0pi}, \ref{lem:li_pi}, and \ref{lem:li2pi}, and Corollary \ref{cor:li_pi-program} motivate us to conjecture that:

    \begin{conjecture}
        \label{conj:li1/2-pi}
        $\displaystyle li_{0}(x) \le \pi(x) \le li_{1}(x)$ for all $x \ge e$.
    \end{conjecture}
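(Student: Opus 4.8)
The plan is to split the two-sided statement into the lower inequality $li_{0}(x)\le\pi(x)$ and the upper inequality $\pi(x)\le li_{1}(x)$, and to prove each by combining a finite numerical verification on an initial interval with an asymptotic estimate that takes over beyond it. The finite part is largely available: $li_{0}(x)\le\pi(x)$ is known for $e\le x<2.09\cdot10^{9}$ by Lemma \ref{lem:li0pi} and Corollary \ref{cor:li_pi-program}, and $\pi(x)\le li_{1}(x)$ is known for $e\le x\le 1.39\cdot10^{17}$ by Lemma \ref{lem:li2pi}, since there $\pi(x)<li(x)\le li_{1}(x)$; both ranges can be extended with more computation and, for the upper inequality, with explicit Riemann-Hypothesis-conditional bounds on $\pi(x)-li(x)$. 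The real content is the tail $x\to\infty$.

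For the tail I would first reduce everything to two estimates on the signed difference $\pi(x)-li(x)$. Bounding $li_{*}(x)$ from below by Lemma \ref{lem:Stieltjes}, isolating a single truncation term of the series in Definitions \ref{def:li_-} and \ref{def:li_+}, applying Corollary \ref{cor:StirlingKappaL} at $\omega=\tfrac12$, and using $|\varepsilon(x)|\le 1.265692883422\ldots$ from \cite{gomez2024stieltjes}, one obtains for all $x\ge e^{1/\underline{\kappa}}\approx 212$
\[
li(x)-li_{0}(x)\;\ge\; B_{\underline{\kappa}}\sqrt{\tfrac{x}{\log(x)}}-1.265692883422\ldots\qquad\text{and}\qquad li_{1}(x)-li(x)\;\ge\; B_{\overline{\kappa}}\sqrt{\tfrac{x}{\log(x)}}-1.265692883422\ldots,
\]
with $B_{\underline{\kappa}}=\sqrt{\pi/(2\underline{\kappa})}\approx 2.90$ and $B_{\overline{\kappa}}=\sqrt{\pi/(2\overline{\kappa})}\approx 0.85$. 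Hence it suffices to prove that, for all sufficiently large $x$,
\[
li(x)-\pi(x)\;\le\; B_{\underline{\kappa}}\sqrt{\tfrac{x}{\log(x)}}-1.265692883422\ldots\qquad\text{and}\qquad \pi(x)-li(x)\;\le\; B_{\overline{\kappa}}\sqrt{\tfrac{x}{\log(x)}}-1.265692883422\ldots.
\]
Next I would feed in the Riemann explicit formula in the form $\pi(x)=li(x)-\tfrac12 li(\sqrt{x})-\sum_{\rho}li(x^{\rho})+(\text{lower-order terms})$: the Chebyshev-bias term $-\tfrac12 li(\sqrt{x})\sim-\sqrt{x}/\log(x)$ favours the upper inequality and is in any case of size $o\!\bigl(\sqrt{x/\log(x)}\bigr)$, so it is comfortably absorbed on either side, and the whole problem collapses to bounding the oscillating zero-sum $\sum_{\rho}li(x^{\rho})$ on the appropriate side by, roughly, $B_{\overline{\kappa}}\sqrt{x/\log(x)}$, i.e. to showing $\bigl|\sum_{\rho}li(x^{\rho})\bigr|=O\!\bigl(\sqrt{x/\log(x)}\bigr)$ with an explicit constant below $B_{\overline{\kappa}}\approx 0.85$.

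That last step is the main obstacle, and it is where the conjecture runs past present technology: even on the Riemann Hypothesis the best available estimate is $|\pi(x)-li(x)|=O(\sqrt{x}\log(x))$ \cite{narkiewicz2000}, a factor $(\log x)^{3/2}$ too large, while Littlewood's oscillation theorem forces $\pi(x)-li(x)=\Omega_{\pm}\!\bigl(\sqrt{x}\,\log\log\log x/\log x\bigr)$ --- consistent with, but far from implying, the $O\!\bigl(\sqrt{x/\log(x)}\bigr)$ control required. A realistic route is therefore conditional: assume the Riemann Hypothesis together with a quantitative pair-correlation (or Cram\'er-type) hypothesis on the zeros, which gives $\pi(x)-li(x)=O\!\bigl(\sqrt{x}\,(\log\log x)^{2}/\log x\bigr)=o\!\bigl(\sqrt{x/\log(x)}\bigr)$; then pin down an explicit $x_{0}$ beyond which this estimate beats the two right-hand sides above, and close the gap between $x_{0}$ and the numerically verified ranges by direct computation. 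Absent such an input the statement must remain a conjecture, the evidence for it being the monotone sandwich $li_{0}(x)\le\underline{li}(x)\le li(x)\le\overline{li}(x)\le li_{1}(x)$ of Theorem \ref{thm:varepsilon_li_1/2-li}, the unconditional ranges of Lemmas \ref{lem:li0pi}, \ref{lem:li_pi}, \ref{lem:li2pi} and Corollary \ref{cor:li_pi-program}, and the numerical data in Table \ref{table:li1/2-pi}.
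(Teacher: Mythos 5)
The statement you were asked to prove is not proved in the paper at all: it is Conjecture \ref{conj:li1/2-pi}, and the paper supports it only by the partial ranges of Lemmas \ref{lem:li0pi}, \ref{lem:li_pi}, \ref{lem:li2pi} and Corollary \ref{cor:li_pi-program} together with the numerical data of Table \ref{table:li1/2-pi}. Your write-up correctly recognizes this and does not claim a proof, so there is no gap to fault you for relative to the paper; if anything, your analysis goes further than the paper's own discussion. Your reduction is sound: combining Lemma \ref{lem:Stieltjes}, Corollary \ref{cor:StirlingKappaL} at $\omega=\tfrac12$ and the bound $|\varepsilon(x)|\le 1.2656\ldots$ does give $li(x)-li_{0}(x)\ge B_{\underline{\kappa}}\sqrt{x/\log(x)}-1.2656\ldots$ and $li_{1}(x)-li(x)\ge B_{\overline{\kappa}}\sqrt{x/\log(x)}-1.2656\ldots$, provided $\underline{m}\le n-1$ (resp. $\overline{m}\ge n+1$), which holds beyond a small explicit threshold covered by the verified ranges. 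Hence the conjecture would follow from an eventual bound $|\pi(x)-li(x)|\lesssim B_{\overline{\kappa}}\sqrt{x/\log(x)}$ with $B_{\overline{\kappa}}\approx 0.85$ --- a quantitative target the paper never formulates, since it only uses the upper-bound constants $C_{\kappa}$ and never exploits its own Corollary \ref{cor:StirlingKappaL} in this direction.

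Your assessment of the difficulty is also accurate and consistent with the paper's logic: by Theorem \ref{thm:varepsilon-li-pi} the conjecture implies the Riemann Hypothesis, while even under RH one only knows $|\pi(x)-li(x)|=O(\sqrt{x}\log x)$, so the closing step of your plan (RH plus a pair-correlation or Cram\'er-type hypothesis, plus computation up to an explicit $x_{0}$) rests on inputs that are themselves conjectural. In short, your proposal is an honest roadmap explaining why the statement must remain a conjecture, which is exactly the status it has in the paper; it cannot be graded as a proof, but neither does the paper provide one.
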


    \begin{conjecture}
        \label{conj:li1/2_pi-}
        $\displaystyle \underline{li}(x) \le \pi(x) \le \overline{li}(x)$ for all $x \ge e$.
    \end{conjecture}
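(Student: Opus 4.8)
I do not have a proof; the following is a plan. Note first that the statement is not accessible by elementary methods: by Theorem~\ref{thm:varepsilon_li_1/2-li}, $\underline{li}(x)\le\pi(x)\le\overline{li}(x)$ would force $|\pi(x)-li(x)|\le f(x)=\underline{D}\sqrt{x/\log(x)}-1.265692883422\ldots=O(x^{1/2})$, hence $|\pi(x)-li(x)|=O(x^{1/2+\alpha})$ for every $\alpha>0$, which by Section~\ref{sec:introduction} is equivalent to the Riemann Hypothesis. So the conjecture is at least as hard as RH (and, as shown later in this paper, implies it), and any proof must bring in deep analytic input.

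The plan is to reduce both inequalities to a comparison of $\pi(x)$ with $li(x)$. For the upper bound, $li(x)\le\overline{li}(x)$ (Theorem~\ref{thm:varepsilon_li_1/2-li}) already gives $\pi(x)\le\overline{li}(x)$ wherever $\pi(x)\le li(x)$; for $e\le x\le 1.39\cdot 10^{17}$ this is Lemma~\ref{lem:li2pi}, and the first sign change of $\pi(x)-li(x)$ lies far beyond every computed range. It therefore remains to show, on the (conjecturally sparse) set where $\pi(x)>li(x)$, that $\pi(x)-li(x)\le\overline{\varepsilon}(x)=\overline{li}(x)-li(x)$. Writing $\overline{li}(x)=li_{*}(x)+\overline{\varepsilon}^{*}(x)$ (Lemma~\ref{lem:varepsilon_1/2+}) and $li_{*}(x)=li(x)-\varepsilon(x)$ with $|\varepsilon(x)|\le 1.265692883422\ldots$, this is the requirement $\pi(x)-li(x)\le\overline{\varepsilon}^{*}(x)-|\varepsilon(x)|$, which needs a lower bound on $\overline{\varepsilon}^{*}(x)=\overline{li}(x)-li_{*}(x)$ and, decisively, an upper bound $\pi(x)-li(x)=O(\sqrt{x/\log(x)})$.

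For the lower bound $\underline{li}(x)\le\pi(x)$, a direct comparison with explicit prime counts breaks down for large $x$: the unconditional lower bounds for $\pi(x)$ of Dusart and others used in Lemmas~\ref{lem:li0pi} and~\ref{lem:li_pi} have the shape $\frac{x}{\log(x)}\bigl(1+\tfrac{1}{\log(x)}+\cdots+\tfrac{j!}{\log^{j}(x)}\bigr)$ with a \emph{fixed} $j$, whereas $\underline{li}(x)$ carries $\underline{m}=\lfloor\underline{\kappa}\log(x)\rfloor\to\infty$ positive terms and so eventually exceeds every such partial sum; this is precisely why Corollary~\ref{cor:li_pi-program} falls back on the interval bootstrap of Lemma~\ref{lem:intervals}. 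The substitute is $\underline{li}(x)=li_{*}(x)-\underline{\varepsilon}^{*}(x)=li(x)-\varepsilon(x)-\underline{\varepsilon}^{*}(x)$, reducing $\underline{li}(x)\le\pi(x)$ to $li(x)-\pi(x)\le\underline{\varepsilon}^{*}(x)-|\varepsilon(x)|$; here $\underline{\varepsilon}^{*}(x)=li_{*}(x)-\underline{li}(x)\ge\frac{x}{\log(x)}\sum_{k=\underline{m}+1}^{\lfloor\log(x)\rfloor-1}\frac{k!}{\log^{k}(x)}$ (Lemma~\ref{lem:Stieltjes}), which by Corollary~\ref{cor:StirlingKappaL} applied to its leading term is of order $\sqrt{x/\log(x)}$, so the \emph{expected} part $\tfrac{1}{2}li(\sqrt{x})\asymp\sqrt{x}/\log(x)$ of $li(x)-\pi(x)$ is absorbed and, again, everything hinges on the fluctuating part of $li(x)-\pi(x)$.

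The main obstacle, shared by both directions, is to bound $\pi(x)-li(x)$ by $O(\sqrt{x/\log(x)})$ uniformly in $x$, which is sharper by a factor of order $\log(x)$ than the conditional bound $|\pi(x)-li(x)|=O(\sqrt{x}\log(x))$ known under RH (\cite{narkiewicz2000}). Through the explicit formula $\pi(x)=li(x)-\tfrac{1}{2}li(\sqrt{x})-\sum_{\rho}li(x^{\rho})+\cdots$ this amounts to a square-root-cancellation statement for the sum over the non-trivial zeros $\rho$, much stronger than RH alone, and Littlewood's oscillation $\pi(x)-li(x)=\Omega_{\pm}\bigl(\sqrt{x}\,\log\log\log(x)/\log(x)\bigr)$ shows the margin left is only a factor $\sqrt{\log(x)}/\log\log\log(x)$ above the genuine fluctuations. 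A realistic intermediate goal, short of the full conjecture, is to extend its unconditional range: iterate the bootstrap of Lemma~\ref{lem:intervals} (as in Corollary~\ref{cor:li_pi-program}) together with the sharpest current explicit estimates for $\pi(x)$ (for instance \cite{Platt2016,Dusart2018}) so as to enlarge the interval $e\le x\le X$ on which $\underline{li}(x)\le\pi(x)\le\overline{li}(x)$ is verified.
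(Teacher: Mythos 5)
There is no proof in the paper to compare against: the statement is stated as Conjecture \ref{conj:li1/2_pi-} and left open, supported only by the finite-range results (Lemma \ref{lem:li_pi} and Corollary \ref{cor:li_pi-program} give $\underline{li}(x)\le\pi(x)$ for $e\le x< 2.09\cdot 10^{9}$, Lemma \ref{lem:li2pi} gives $\pi(x)\le\overline{li}(x)\le li_{1}(x)$ for $e\le x\le 1.39\cdot 10^{17}$), by the computations of Table \ref{table:li1/2-pi}, and by the implication (Lemma \ref{lem:varepsilon-li-pi}, Theorem \ref{thm:varepsilon-li-pi}) that the conjecture yields the Riemann Hypothesis. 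Your submission, as you say yourself, is likewise not a proof but a plan, so the only thing to evaluate is whether your assessment is sound, and it largely is: combining the conjecture with Theorem \ref{thm:varepsilon_li_1/2-li} does force $|\pi(x)-li(x)|\le \underline{D}\sqrt{x/\log(x)}$ up to the additive constant, which is sharper than the $O(\sqrt{x}\log(x))$ bound known under RH, so any proof needs analytic input beyond this paper; your observation that fixed-length Dusart-type lower bounds must eventually lose to $\underline{li}(x)$ (because $\underline{m}=\lfloor\underline{\kappa}\log(x)\rfloor\to\infty$) correctly explains why the paper resorts to the bootstrap of Lemma \ref{lem:intervals}; and your fallback proposal (enlarge the verified range via Lemma \ref{lem:intervals} together with \cite{Platt2016,Dusart2018}) is exactly the kind of evidence the paper itself offers.

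The genuine gap is the central analytic step, which your plan names but does not address: both directions are reduced to showing $\pi(x)-li(x)$ (respectively $li(x)-\pi(x)$) is bounded by quantities of size $\Theta\left(\sqrt{x/\log(x)}\right)$, i.e.\ to a square-root-cancellation statement for the sum over nontrivial zeros in the explicit formula. No mechanism is proposed for this; it does not follow from RH, and by Littlewood's oscillation result the claimed bound sits only a factor of order $\sqrt{\log(x)}/\log\log\log(x)$ above known fluctuations, so the reduction replaces the conjecture by a statement at least as deep. In other words, your plan is a reasonable reformulation plus a suggestion for extended numerical verification, but it contains no route to the statement itself; in that respect it is on the same footing as the paper, which deliberately leaves the statement as a conjecture.
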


    \begin{table}[htbp]
        \centering
        \begin{tabular}{|r|r|r|}
            \hline
            $x$ & $\pi(x)-\underline{li}(x)$ & $\overline{li}(x)-\pi(x)$ \\
            \hline
$10^{1}$ & $2.13$ & $9.57$ \\
$10^{2}$ & $6.33$ & $23.52$ \\
$10^{3}$ & $17.16$ & $55.95$ \\
$10^{4}$ & $58.45$ & $139.71$ \\
$10^{5}$ & $132.10$ & $376.06$ \\
$10^{6}$ & $437.14$ & $1091.65$ \\
$10^{7}$ & $881.89$ & $3132.79$ \\
$10^{8}$ & $3784.22$ & $8980.36$ \\
$10^{9}$ & $11065.04$ & $26169.26$ \\
$10^{10}$ & $34083.86$ & $76423.47$ \\
$10^{11}$ & $111668.82$ & $232522.61$ \\
$10^{12}$ & $268692.57$ & $706894.16$ \\
$10^{13}$ & $1021358.16$ & $2139406.84$ \\
$10^{14}$ & $2223695.40$ & $6497453.84$ \\
$10^{15}$ & $9009134.29$ & $19918903.73$ \\
$10^{16}$ & $25523638.47$ & $60884618.66$ \\
$10^{17}$ & $79898250.94$ & $184478681.15$ \\
$10^{18}$ & $274132621.27$ & $562826103.13$ \\
$10^{19}$ & $656961166.68$ & $1758429421.91$ \\
$10^{20}$ & $2610932705.55$ & $5311208355.51$ \\
$10^{21}$ & $5858023843.84$ & $16213615081.15$ \\
$10^{22}$ & $24070220849.05$ & $49862467481.66$ \\
$10^{23}$ & $66627010346.03$ & $154346076031.31$ \\
$10^{24}$ & $215124307774.04$ & $468453134188.06$ \\
$10^{25}$ & $728384617086.84$ & $1439176608463.11$ \\
$10^{26}$ & $1880765334594.75$ & $4397486913057.98$ \\
$10^{27}$ & $7147401214946.33$ & $13497006520090.01$ \\
$10^{28}$ & $16191280040503.80$ & $41849585578079.90$ \\
$10^{29}$ & $66820995497879.00$ & $131091182659625.00$ \\
            \hline
        \end{tabular}
        
        \caption{Values of the difference between $\displaystyle \frac{1}{2}$-asymptotic approximation functions and function $\displaystyle \pi(x)$ for $n=10^2,\ldots,10^{29}$. Results are obtained with the Python program inside the Colab notebook (lifrac.ipynb) freely available at \cite{GomezPrimesGit}.}
        \label{table:li1/2-pi} 
    \end{table}

    \begin{lem}
        \label{lem:varepsilon-li-li}
        If Conjecture \ref{conj:li1/2_pi-} holds then Conjecture \ref{conj:li1/2-pi} holds.
    \end{lem}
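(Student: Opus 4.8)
The plan is to obtain Conjecture~\ref{conj:li1/2-pi} from Conjecture~\ref{conj:li1/2_pi-} by sandwiching $\pi(x)$ between two inequalities that are already implicit in the excerpt. The first thing I would record is the monotonicity of the $\omega\alpha$-asymptotic approximations in the parameter $\alpha$: by Definition~\ref{def:li_-}, if $\alpha \le \alpha'$ then $li_{\underline{\omega},\alpha'}(x) - li_{\underline{\omega},\alpha}(x) = \frac{x}{\log(x)}\cdot\frac{\underline{m}!}{\log^{\underline{m}}(x)}(\alpha'-\alpha) \ge 0$ for all $x \ge e$, and similarly for $li_{\overline{\omega},\alpha}$ by Definition~\ref{def:li_+}. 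Applying this with $\omega = \tfrac12$ gives $li_{0}(x) = li_{\underline{1/2},0}(x) \le li_{\underline{1/2},\underline{\alpha}}(x) = \underline{li}(x)$, since $0 \le \underline{\alpha}$, and $\overline{li}(x) = li_{\overline{1/2},\overline{\alpha}}(x) \le li_{\overline{1/2},1}(x) = li_{1}(x)$, since $\overline{\alpha} \le 1$. These are precisely the comparisons already used in the proofs of Lemmas~\ref{lem:varepsilon_1/2-} and~\ref{lem:varepsilon_1/2+}, so no new work is needed.

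With those two inequalities in hand, the conclusion is a one-line chain: assuming Conjecture~\ref{conj:li1/2_pi-}, for every real $x \ge e$ we have $li_{0}(x) \le \underline{li}(x) \le \pi(x) \le \overline{li}(x) \le li_{1}(x)$, which is exactly the statement of Conjecture~\ref{conj:li1/2-pi}. There is no real obstacle here; the only point worth spelling out is the monotonicity in $\alpha$, and that is immediate from the definitions because the extra term is a product of quantities that are nonnegative for $x \ge e$. Hence the proof reduces to citing Definitions~\ref{def:li_-} and~\ref{def:li_+} together with $0 \le \underline{\alpha}$ and $\overline{\alpha} \le 1$, and then concatenating the inequalities.
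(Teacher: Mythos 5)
Your proof is correct and follows the same route as the paper: the paper's own argument also rests on the two comparisons $li_{0}(x) \le \underline{li}(x)$ and $\overline{li}(x) \le li_{1}(x)$, which hold by monotonicity in $\alpha$ (as already used in Lemmas~\ref{lem:varepsilon_1/2-} and~\ref{lem:varepsilon_1/2+}), followed by the same sandwich with Conjecture~\ref{conj:li1/2_pi-}. You simply spell out the monotonicity step that the paper leaves implicit.
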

    \begin{proof}
        Follows from $li_{0}(x) \le \underline{li}(x)$ and $\overline{li}(x) \le li_{1}(x)$.
    \end{proof}  
    
    \begin{lem}
        \label{lem:varepsilon-li-pi}
        If Conjecture \ref{conj:li1/2-pi} holds then $\displaystyle |li(x)-\pi(x)| = O\left(\sqrt{\frac{x}{\log(x)}}\right)$ for all $x \ge e$.
    \end{lem}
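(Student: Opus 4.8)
The plan is to chain Conjecture \ref{conj:li1/2-pi} together with the two-sided bound already established in Theorem \ref{thm:varepsilon_li_1/2-li}. Assume the conjecture holds, so that $li_{0}(x) \le \pi(x) \le li_{1}(x)$ for all $x \ge e$. By Theorem \ref{thm:varepsilon_li_1/2-li} we have $li(x) - f(x) \le li_{0}(x)$ and $li_{1}(x) \le li(x) + f(x)$, where $f(x) = \underline{D}\sqrt{\frac{x}{\log(x)}} - 1.265692883422\ldots$ and $\underline{D} \approx 50.0182261266$. Combining these two chains gives $li(x) - f(x) \le li_{0}(x) \le \pi(x) \le li_{1}(x) \le li(x) + f(x)$, hence $|li(x) - \pi(x)| \le f(x)$ for all $x \ge e$.

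It then remains to observe that $f(x) \in O\!\left(\sqrt{\frac{x}{\log(x)}}\right)$: indeed $f(x) = \underline{D}\sqrt{\frac{x}{\log(x)}} - 1.265692883422\ldots \le \underline{D}\sqrt{\frac{x}{\log(x)}}$, so the subtracted constant only sharpens the bound and the big-$O$ statement follows with implied constant $\underline{D}$. Therefore $|li(x) - \pi(x)| = O\!\left(\sqrt{\frac{x}{\log(x)}}\right)$ for all $x \ge e$, as claimed.

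There is no substantive obstacle here; the result is a direct consequence of the conjecture and of Theorem \ref{thm:varepsilon_li_1/2-li}. The only point worth stating explicitly is that the inequality $|li(x)-\pi(x)| \le f(x)$ is uniform over the whole range $x \ge e$, which is exactly the range on which Theorem \ref{thm:varepsilon_li_1/2-li} is valid, so no separate treatment of small $x$ is needed.
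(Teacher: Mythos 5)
Your argument is essentially the paper's own proof: both combine Conjecture \ref{conj:li1/2-pi} with the sandwich in Theorem \ref{thm:varepsilon_li_1/2-li} to get $|li(x)-\pi(x)|$ bounded by a quantity of size $\underline{D}\sqrt{x/\log(x)}$ up to the constant $1.265692883422\ldots$, and then read off the big-$O$. The only cosmetic difference is that you take the theorem's $f(x)$ literally (constant subtracted) while the paper's proof writes the more conservative bound $\underline{D}\sqrt{x/\log(x)}+1.265692883422\ldots$; either way the $O\left(\sqrt{x/\log(x)}\right)$ conclusion is the same.
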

    \begin{proof}
        $\displaystyle  |li(x)-\pi(x)| \le \underline{D} \sqrt{\frac{x}{\log(x)}} + 1.265692883422\ldots$ with $\underline{D} \approx 50.0182261266$ (Follows from Theorem \ref{thm:varepsilon_li_1/2-li} and Conjecture \ref{conj:li1/2-pi}). 
    \end{proof}

    \begin{thm}
        \label{thm:varepsilon-li-pi}
        If Conjecture \ref{conj:li1/2-pi} holds then Riemann's Conjecture holds.
    \end{thm}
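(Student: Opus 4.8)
The plan is to reduce the statement to the classical characterization of the Riemann Hypothesis recalled in the introduction, namely that the Riemann Hypothesis is equivalent to $|\pi(x)-li(x)| = O(x^{1/2+\alpha})$ for every $\alpha > 0$ (see \cite{narkiewicz2000}). The argument is then a short chain of three implications and uses no new estimates beyond what has already been established.

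First I would apply Lemma \ref{lem:varepsilon-li-pi}: assuming Conjecture \ref{conj:li1/2-pi}, it provides the explicit bound $|li(x)-\pi(x)| \le \underline{D}\sqrt{\frac{x}{\log(x)}} + 1.265692883422\ldots$ for all $x \ge e$, with $\underline{D} \approx 50.0182261266$; in particular $|li(x)-\pi(x)| = O\!\left(\sqrt{\frac{x}{\log(x)}}\right)$. Next I would weaken this to the shape demanded by the classical equivalence. Since $\log(x) \ge 1$ for all $x \ge e$, we have $\sqrt{\frac{x}{\log(x)}} \le \sqrt{x}$, hence $|\pi(x)-li(x)| = O(\sqrt{x})$. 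Because $\sqrt{x} \le x^{1/2+\alpha}$ for all $x \ge 1$ and all $\alpha > 0$, this yields $|\pi(x)-li(x)| = O(x^{1/2+\alpha})$ for every $\alpha > 0$. Invoking the cited equivalence then gives the Riemann Hypothesis, and hence the theorem.

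At the level of this implication there is essentially no obstacle: the only point to watch is the quantifier order in the classical statement (one implicit constant per fixed $\alpha$), which is trivially met here — indeed the bound we carry is of the stronger form $O(\sqrt{x}/\sqrt{\log x})$, comfortably inside every $O(x^{1/2+\alpha})$. The genuine mathematical difficulty of the paper is concentrated entirely in Conjecture \ref{conj:li1/2-pi} itself, which is assumed as the hypothesis; once that conjecture (or, via Lemma \ref{lem:varepsilon-li-li}, the weaker-looking Conjecture \ref{conj:li1/2_pi-}) is granted, the passage to the Riemann Hypothesis is immediate from the material already developed in this section.
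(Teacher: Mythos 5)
Your proposal is correct and follows essentially the same route as the paper: invoke Lemma \ref{lem:varepsilon-li-pi} to get $|li(x)-\pi(x)| = O\left(\sqrt{\frac{x}{\log(x)}}\right)$, weaken this to $O(\sqrt{x})$ (and hence to $O(x^{1/2+\alpha})$ for every $\alpha>0$), and conclude via the classical equivalence recalled in the introduction. Your write-up merely makes explicit the final appeal to that equivalence, which the paper's one-line proof leaves implicit.
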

    \begin{proof}
        Follows from Lemma \ref{lem:varepsilon-li-pi} and $\displaystyle f(x) = O\left(\sqrt{\frac{x}{\log(x)}}\right) $ then $f(x) = O(\sqrt{x})$.
    \end{proof}
  
\section{Auxiliary Facts and Proofs}
    We introduce some customary statements that we use for proving our main results.

    \begin{lem}
        \label{lem:alphax}
        $\displaystyle \alpha^{\alpha\log(x)} = x^{\alpha\log(\alpha)}$ for all $x > 0$ and all $\alpha > 0$. 
    \end{lem}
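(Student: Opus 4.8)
The plan is to reduce the identity to a triviality by taking logarithms. Since $x>0$ and $\alpha>0$, both expressions $\alpha^{\alpha\log(x)}$ and $x^{\alpha\log(\alpha)}$ are well-defined positive real numbers (real exponentiation of a positive base is defined for every real exponent, regardless of the sign of $\log(x)$ or $\log(\alpha)$). So it suffices to show the two sides have the same natural logarithm and then invoke injectivity of $\log$ on the positive reals.

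Concretely, I would first apply $\log$ to the left-hand side and use the power rule $\log(a^{b}) = b\log(a)$, valid for any $a>0$ and any real $b$, to get $\log\!\left(\alpha^{\alpha\log(x)}\right) = \alpha\log(x)\log(\alpha)$. Next I apply $\log$ to the right-hand side the same way to get $\log\!\left(x^{\alpha\log(\alpha)}\right) = \alpha\log(\alpha)\log(x)$. These two quantities are equal by commutativity of multiplication of real numbers.

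Finally, since $\log$ is strictly increasing (hence injective) on $(0,\infty)$ and both sides are in $(0,\infty)$, equality of the logarithms forces equality of the original expressions, which completes the proof. The edge cases $\alpha=1$ (both sides equal $1$) and $x=1$ (both sides equal $1$) are automatically covered by this argument and need no separate treatment. There is no real obstacle here; the only thing to be careful about is ensuring the power rule is applied only to positive bases, which is exactly why the hypotheses $x>0$ and $\alpha>0$ are stated.
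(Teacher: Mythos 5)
Your proof is correct and is essentially the same as the paper's: both take logarithms of the two sides, apply the power rule $\log(a^{b})=b\log(a)$, use commutativity of multiplication, and conclude by injectivity of $\log$ on $(0,\infty)$. Your write-up is merely a bit more explicit about well-definedness and the injectivity step.
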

    \begin{proof}
        $\log(\alpha^{\alpha\log(x)})=\alpha\log(x)\log(\alpha)=\alpha\log(\alpha)\log(x)=\log\left(x^{\alpha\log(\alpha)})\right)$ (logarithm properties). Therefore, $\displaystyle \alpha^{\alpha\log(x)} = x^{\alpha\log(\alpha)}$.
    \end{proof}
    \begin{cor}
        \label{cor:alphax}
        $\displaystyle \left(\frac{\alpha}{e}\right)^{\alpha\log(x)}=\frac{1}{x^{\alpha(1-\log(\alpha))}}$ for all $x > 0$ and all $\alpha > 0$.
    \end{cor}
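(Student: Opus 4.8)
The plan is to derive the identity directly from Lemma~\ref{lem:alphax} by separating the base $\alpha/e$ into its factors. First I would apply the exponent law $\left(\frac{a}{b}\right)^{t}=\frac{a^{t}}{b^{t}}$, which is valid here because $\alpha>0$, $e>0$, and $x>0$ make every power a well-defined positive real, to write $\left(\frac{\alpha}{e}\right)^{\alpha\log(x)}=\frac{\alpha^{\alpha\log(x)}}{e^{\alpha\log(x)}}$. Then I would simplify the denominator via $e^{\alpha\log(x)}=\bigl(e^{\log(x)}\bigr)^{\alpha}=x^{\alpha}$.

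Next I would invoke Lemma~\ref{lem:alphax}, which states $\alpha^{\alpha\log(x)}=x^{\alpha\log(\alpha)}$, to rewrite the numerator. Combining the two steps gives $\left(\frac{\alpha}{e}\right)^{\alpha\log(x)}=\frac{x^{\alpha\log(\alpha)}}{x^{\alpha}}=x^{\alpha\log(\alpha)-\alpha}=x^{-\alpha(1-\log(\alpha))}=\frac{1}{x^{\alpha(1-\log(\alpha))}}$, which is exactly the claimed identity.

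Since the whole argument consists only of elementary manipulations of exponents together with a single appeal to the previously established Lemma~\ref{lem:alphax}, I do not anticipate any genuine obstacle; the only point needing mild care is keeping all bases positive so that the exponent rules apply without restriction, and this is precisely what the hypotheses $x>0$ and $\alpha>0$ guarantee. (Equivalently, one could take natural logarithms of both sides and verify $\alpha\log(x)\bigl(\log(\alpha)-1\bigr)=-\alpha\bigl(1-\log(\alpha)\bigr)\log(x)$, but the base-splitting route reuses Lemma~\ref{lem:alphax} more transparently.)
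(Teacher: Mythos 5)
Your proposal is correct and follows essentially the same route as the paper's own proof: split $\left(\frac{\alpha}{e}\right)^{\alpha\log(x)}$ into $\frac{\alpha^{\alpha\log(x)}}{e^{\alpha\log(x)}}$, rewrite the numerator via Lemma~\ref{lem:alphax} and the denominator as $x^{\alpha}$, then combine the exponents. No gaps; nothing further is needed.
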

    \begin{proof}
        $\displaystyle \left(\frac{\alpha}{e}\right)^{\alpha\log(x)} = \frac{\alpha^{\alpha\log(x)}}{e^{\alpha\log(x)}} = \frac{x^{\alpha\log(\alpha)}}{e^{\alpha\log(x)}}$ (Lemma \ref{lem:alphax}). Therefore, $\displaystyle \left(\frac{\alpha}{e}\right)^{\alpha\log(x)} = \frac{x^{\alpha\log(\alpha)}}{x^{\alpha}} = \frac{1}{x^{\alpha(1-\log(\alpha))}}$.
    \end{proof}

    \begin{lem}
        \label{lem:floorceil}
        $a - 1 < \lfloor a \rfloor \le \lceil a \rceil < a + 1$ for all real number $0 < a$.
    \end{lem}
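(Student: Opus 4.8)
The plan is to derive all three inequalities directly from the order-theoretic characterizations of the floor and ceiling functions. Recall that $\lfloor a \rfloor$ is the unique integer with $\lfloor a \rfloor \le a < \lfloor a \rfloor + 1$, and that $\lceil a \rceil$ is the unique integer with $\lceil a \rceil - 1 < a \le \lceil a \rceil$. The entire lemma is essentially a repackaging of these two facts.

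First I would read off the leftmost inequality: from $a < \lfloor a \rfloor + 1$, subtracting $1$ gives $a - 1 < \lfloor a \rfloor$. Next, for the rightmost inequality, from $\lceil a \rceil - 1 < a$, adding $1$ gives $\lceil a \rceil < a + 1$. For the middle inequality I would use $\lfloor a \rfloor \le a$ and $a \le \lceil a \rceil$ (both immediate from the characterizations) and chain them to get $\lfloor a \rfloor \le \lceil a \rceil$. Assembling the three pieces yields $a - 1 < \lfloor a \rfloor \le \lceil a \rceil < a + 1$; note that the hypothesis $0 < a$ is never actually used, so the inequality in fact holds for every real $a$.

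There is no genuine obstacle here, only a choice of starting point: if one prefers to work from the single definition $\lfloor a \rfloor = \max\{\, n \in \mathbb{Z} : n \le a \,\}$, then $a < \lfloor a \rfloor + 1$ holds because otherwise $\lfloor a \rfloor + 1$ would be an integer no greater than $a$ yet strictly above the maximum, a contradiction; the ceiling bound then follows symmetrically (or from the identity $\lceil a \rceil = -\lfloor -a \rfloor$), and $\lfloor a \rfloor \le \lceil a \rceil$ because $\lfloor a \rfloor \le a \le \lceil a \rceil$. Either route is a two-line argument, and the lemma is used in the paper only to supply the elementary bound $m \le \kappa\log(x) < m+1$ invoked in Lemmas \ref{lem:StirlingKappa} and \ref{lem:StirlingKappaL}.
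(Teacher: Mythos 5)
Your proof is correct, and it takes a cleaner route than the paper's. The paper splits into cases: when $a$ is a natural number it notes $a-1 < \lfloor a \rfloor = a = \lceil a \rceil < a+1$ directly, and when $a$ is not an integer it works from the identity $\lceil a \rceil - \lfloor a \rfloor = 1$ together with $\lfloor a \rfloor < a < \lceil a \rceil$, rearranging to get the two outer bounds (and in doing so it states them as non-strict, $a-1 \le \lfloor a \rfloor$ and $\lceil a \rceil \le a+1$, even though strictness holds and is what the lemma asserts). You instead read everything off the defining characterizations $\lfloor a \rfloor \le a < \lfloor a \rfloor + 1$ and $\lceil a \rceil - 1 < a \le \lceil a \rceil$, which yields all three inequalities, already strict where needed, with no case analysis and no reliance on the gap identity for non-integers. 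Your observation that the hypothesis $0 < a$ is never used is also accurate; the paper's statement is simply stated in the restricted form it needs downstream (via Lemma \ref{lem:floorceilfrac}). The only quibble is your closing remark about where the lemma is used: it enters Lemmas \ref{lem:StirlingKappa} and \ref{lem:StirlingKappaL} indirectly, through Lemma \ref{lem:floorceilfrac}, rather than being invoked there by itself — but that does not affect the proof.
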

    \begin{proof} 
        Clearly, $a - 1 < \lfloor a \rfloor = a = \lceil a \rceil < a + 1$ if $a$ is a natural number. Now, $\lceil a \rceil - \lfloor a \rfloor = 1$ if $a$ is not a natural number. Then, $a < \lceil a \rceil$ so $a - \lceil a \rceil + \lfloor a \rfloor < \lfloor a \rfloor$, i.e., $a-1 \le \lfloor a \rfloor$. Moreover, $\lfloor a \rfloor < a$ then $\lceil a \rceil \le a + \lceil a \rceil - \lfloor a \rfloor$, i.e., $\lceil a \rceil \le a + 1$.
    \end{proof}

    \begin{lem}
        \label{lem:floorceilfrac}
        $\displaystyle \left(\frac{r}{s}\right)^{\lfloor a \rfloor} \le \left(\frac{s}{r}\right)\left(\frac{r}{s}\right)^{a}$ and $\displaystyle \left(\frac{r}{s}\right)\left(\frac{r}{s}\right)^{a} \le \left(\frac{r}{s}\right)^{\lceil a \rceil}$ for all pair of real numbers $0 < r \le s$ and all real number $0 < a$.
    \end{lem}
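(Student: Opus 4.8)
\textbf{Proof proposal for Lemma \ref{lem:floorceilfrac}.} The plan is to reduce everything to a single base in $(0,1]$ and then compare exponents. Set $t = \frac{r}{s}$; since $0 < r \le s$ we have $0 < t \le 1$, and the map $u \mapsto t^{u}$ is non-increasing on $\mathbb{R}$ (strictly decreasing if $t<1$, constant if $t=1$). Rewriting both displayed inequalities in terms of $t$, the left one reads $t^{\lfloor a\rfloor} \le t^{-1}t^{a} = t^{a-1}$, and the right one reads $t^{a+1} = t\cdot t^{a} \le t^{\lceil a\rceil}$.

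For the first inequality I would invoke Lemma \ref{lem:floorceil}, which gives $a-1 < \lfloor a\rfloor$, hence $a-1 \le \lfloor a\rfloor$; applying the non-increasing function $u\mapsto t^{u}$ reverses this to $t^{\lfloor a\rfloor} \le t^{a-1}$, which is exactly $\left(\frac{r}{s}\right)^{\lfloor a\rfloor} \le \left(\frac{s}{r}\right)\left(\frac{r}{s}\right)^{a}$. For the second inequality, Lemma \ref{lem:floorceil} gives $\lceil a\rceil < a+1$, hence $\lceil a\rceil \le a+1$; applying $u\mapsto t^{u}$ again reverses this to $t^{a+1} \le t^{\lceil a\rceil}$, i.e. $\left(\frac{r}{s}\right)\left(\frac{r}{s}\right)^{a} \le \left(\frac{r}{s}\right)^{\lceil a\rceil}$. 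This closes both cases.

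There is essentially no hard step here; the only point requiring care is keeping the direction of the monotonicity straight (because the base lies in $(0,1]$, the exponential is non-increasing, so the inequalities between exponents from Lemma \ref{lem:floorceil} flip), and noting that the degenerate situations $r=s$ (base $1$) and $a\in\mathbb{N}$ (floor $=$ ceiling $=a$) are automatically covered since then the relevant inequalities hold with equality or trivially. So the main "obstacle" is simply the bookkeeping of rewriting $\frac{s}{r}=t^{-1}$ and $\frac{r}{s}=t$ correctly and citing Lemma \ref{lem:floorceil} twice.
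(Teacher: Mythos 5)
Your proposal is correct and follows essentially the same route as the paper: both rely on $\frac{r}{s}\le 1$ making exponentiation non-increasing, together with the bounds $a-1\le\lfloor a\rfloor$ and $\lceil a\rceil\le a+1$ from Lemma \ref{lem:floorceil}. The substitution $t=\frac{r}{s}$ is only a notational repackaging of the paper's argument.
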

    \begin{proof} 
        Clearly, $\displaystyle \frac{r}{s} \le 1$ (since $0 < r \le s$). Since $a - 1 \le \lfloor a \rfloor$ (Lemma \ref{lem:floorceil}) then $\displaystyle \left(\frac{r}{s}\right)^{\lfloor a \rfloor} \le \left(\frac{r}{s}\right)^{a-1}=\left(\frac{s}{r}\right)\left(\frac{r}{s}\right)^{a}$. Now $a + 1 \ge \lceil a \rceil$ (Lemma \ref{lem:floorceil}) then $\displaystyle \left(\frac{r}{s}\right)^{\lceil a \rceil} \ge \left(\frac{r}{s}\right)^{a+1}=\left(\frac{r}{s}\right)\left(\frac{r}{s}\right)^{a}$.
    \end{proof}

    \begin{lem}
        \label{lem:sumk!/n^k}
        $\displaystyle \sum_{k=1}^{n+1} \frac{k!}{(n+1)^k} \le \sum_{k=1}^{n} \frac{k!}{n^k}$ for all $n \ge 1$.
    \end{lem}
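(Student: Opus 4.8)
The plan is to peel off the ``extra'' last term on the left-hand side and absorb it into a single carefully chosen summand on the right.

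First I would write
\[
\sum_{k=1}^{n+1}\frac{k!}{(n+1)^k}=\sum_{k=1}^{n}\frac{k!}{(n+1)^k}+\frac{(n+1)!}{(n+1)^{n+1}},
\]
so that the claimed inequality becomes equivalent to
\[
\frac{(n+1)!}{(n+1)^{n+1}}\le\sum_{k=1}^{n}k!\left(\frac{1}{n^k}-\frac{1}{(n+1)^k}\right).
\]
Since $n<n+1$, every summand on the right is nonnegative, so it suffices to retain only the $k=n$ term; that is, it is enough to prove
\[
\frac{(n+1)!}{(n+1)^{n+1}}\le n!\left(\frac{1}{n^n}-\frac{1}{(n+1)^n}\right).
\]

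Next I would simplify both sides. On the left, $\frac{(n+1)!}{(n+1)^{n+1}}=\frac{n!}{(n+1)^n}$, and on the right, $\frac{1}{n^n}-\frac{1}{(n+1)^n}=\frac{(n+1)^n-n^n}{n^n(n+1)^n}$. Cancelling the common factor $\frac{n!}{(n+1)^n}$ reduces the inequality to $1\le\frac{(n+1)^n-n^n}{n^n}$, i.e. $(n+1)^n\ge 2n^n$, i.e. $\left(1+\tfrac1n\right)^n\ge 2$. This last estimate follows immediately from the binomial theorem, since $\left(1+\tfrac1n\right)^n=\sum_{j=0}^{n}\binom nj n^{-j}\ge\binom n0+\binom n1 n^{-1}=2$, which closes the argument for every $n\ge 1$ (with equality throughout exactly when $n=1$, consistent with the non-strict inequality in the statement).

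The only real ``obstacle'' is the bookkeeping observation that, once the surplus term $\frac{(n+1)!}{(n+1)^{n+1}}$ is moved to the right-hand side, it is dominated by the single summand with index $k=n$ alone---the other positive summands are not needed---after which everything collapses to the elementary inequality $(1+1/n)^n\ge 2$. I expect no technical difficulty beyond identifying this term.
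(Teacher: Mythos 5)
Your proof is correct and follows essentially the same route as the paper: both isolate the surplus term $\frac{(n+1)!}{(n+1)^{n+1}}=\frac{n!}{(n+1)^n}$, discard the nonnegative differences with $k<n$, and reduce the claim to $\left(1+\frac1n\right)^n\ge 2$ (the paper cites monotonicity of $\left(1+\frac1n\right)^n$ where you use the binomial theorem, a negligible difference). No issues.
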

    \begin{proof}
       We have that $\displaystyle \sum_{k=1}^{n} \frac{k!}{n^k} - \sum_{k=1}^{n+1} \frac{k!}{(n+1)^k} = \sum_{k=1}^{n} k! \left( \frac{1}{n^k} - \frac{1}{(n+1)^k} \right) - \frac{n!}{(n+1)^{n}} = \displaystyle \sum_{k=1}^{n-1} k! \left( \frac{1}{n^k} - \frac{1}{(n+1)^k} \right) + n! \left( \frac{1}{n^n} - \frac{2}{(n+1)^n} \right) \ge \frac{n!}{(n+1)^n} \left( \left(\frac{n+1}{n}\right)^n - 2\right) \ge 0$  (since $\displaystyle \left(\frac{n+1}{n}\right)^n$ is an increasing function then $\displaystyle \left(\frac{n+1}{n}\right)^n \ge \left(\frac{1+1}{1}\right)^1 = 2$).
    \end{proof}

    \begin{cor}
        \label{cor:sumk!/n^k}
        $\displaystyle \sum_{k=1}^{n} \frac{k!}{n^k} \le 1$ for all $n \ge 1$.
    \end{cor}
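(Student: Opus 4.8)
The plan is to deduce the corollary directly from the preceding Lemma \ref{lem:sumk!/n^k}, which already contains all the real work. Write $a_n = \sum_{k=1}^{n} \frac{k!}{n^k}$. Lemma \ref{lem:sumk!/n^k} is exactly the statement $a_{n+1} \le a_n$ for every $n \ge 1$, i.e. the sequence $(a_n)_{n\ge 1}$ is non-increasing.

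First I would iterate this monotonicity down to the base index: for any $n \ge 1$ we have $a_n \le a_{n-1} \le \cdots \le a_1$. Then I would evaluate the base case: $a_1 = \sum_{k=1}^{1} \frac{k!}{1^k} = \frac{1!}{1} = 1$. Combining the two gives $a_n \le a_1 = 1$ for all $n \ge 1$, which is the assertion of the corollary. (If one prefers an explicitly inductive phrasing: the base case $n=1$ is the computation $a_1 = 1$, and the inductive step $a_{n+1} \le a_n \le 1$ is immediate from Lemma \ref{lem:sumk!/n^k} together with the induction hypothesis.)

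There is no real obstacle at this stage — all the difficulty has been absorbed into Lemma \ref{lem:sumk!/n^k}, whose proof handles the delicate comparison of consecutive sums via a telescoping estimate on the differences $k!\bigl(\tfrac{1}{n^k}-\tfrac{1}{(n+1)^k}\bigr)$ and the bound $\bigl(\tfrac{n+1}{n}\bigr)^n \ge 2$. Consequently the proof of the corollary is a two-line consequence: monotonicity plus the trivial evaluation at $n=1$.
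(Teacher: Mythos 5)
Your proposal is correct and takes essentially the same route as the paper: the paper also proves the corollary by induction on $n$, with base case $\sum_{k=1}^{1}\frac{k!}{1^k}=1$ and the inductive step supplied by Lemma \ref{lem:sumk!/n^k}. Your monotonicity phrasing is just a restatement of that induction, so there is nothing to add.
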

    \begin{proof}
        We use induction. For $m=1$ we have $\displaystyle \sum_{k=1}^{1} \frac{k!}{1^k} = 1$. For $m=n+1$ we have $\displaystyle \sum_{k=1}^{m} \frac{k!}{m^k} = \sum_{k=1}^{n+1} \frac{k!}{(n+1)^k} \le \sum_{k=1}^{n} \frac{k!}{n^k} \le 1$ (Lemma \ref{lem:sumk!/n^k} and induction step).
    \end{proof}
    
    \begin{lem}
        \label{lem:prod_n-m_1}
        $\displaystyle \frac{1}{y^{k}}\prod_{i=1}^{k}(m+i) \le \frac{1}{y^{j}}\prod_{i=1}^{j}(m+i) \le 1$ for all real number $y \ge 1$ and all natural numbers $m$, $k$, and $j$ such that $\displaystyle m < \lfloor y \rfloor$ and all natural number $1 \le j \le k \le \lfloor y \rfloor - m$.
    \end{lem}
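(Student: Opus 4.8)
The plan is to isolate the quantity $\displaystyle a_{j} = \frac{1}{y^{j}}\prod_{i=1}^{j}(m+i)$, viewed as a finite sequence indexed by $1 \le j \le \lfloor y \rfloor - m$, and reduce the lemma to two elementary facts: (i) the sequence $(a_{j})$ is non‑increasing on this index range, and (ii) $a_{1} \le 1$. Granting these, the first claimed inequality $a_{k} \le a_{j}$ for $1 \le j \le k \le \lfloor y \rfloor - m$ is just monotonicity, and the second, $a_{j} \le 1$, follows from $a_{j} \le a_{1} \le 1$; writing $a_{k}$ and $a_{j}$ back in their product form gives exactly the statement.

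For step (i) I would compute the consecutive ratio: for any $j$ with $1 \le j$ and $j+1 \le \lfloor y \rfloor - m$, we have $\displaystyle \frac{a_{j+1}}{a_{j}} = \frac{m+j+1}{y}$. The hypothesis $j+1 \le \lfloor y \rfloor - m$ rearranges to $m+j+1 \le \lfloor y \rfloor \le y$, so this ratio is at most $1$, i.e. $a_{j+1} \le a_{j}$. Chaining these one‑step inequalities from index $j$ up to index $k$ (all intermediate indices remain $\le k \le \lfloor y \rfloor - m$, so every ratio used is legitimate) yields $a_{k} \le a_{j}$.

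For step (ii), since $m$ is a natural number with $m < \lfloor y \rfloor$ we get $m+1 \le \lfloor y \rfloor \le y$, hence $\displaystyle a_{1} = \frac{m+1}{y} \le 1$. Combining, $\displaystyle \frac{1}{y^{k}}\prod_{i=1}^{k}(m+i) = a_{k} \le a_{j} = \frac{1}{y^{j}}\prod_{i=1}^{j}(m+i) \le a_{1} \le 1$.

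There is no substantial obstacle here; the only point requiring a little care is bookkeeping of the index ranges, namely ensuring that every ratio $\frac{m+j+1}{y}$ invoked in the telescoping corresponds to a step with $j+1 \le \lfloor y \rfloor - m$ so that $m+j+1 \le \lfloor y \rfloor \le y$ genuinely holds. Everything else is a one‑line computation.
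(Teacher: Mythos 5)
Your proof is correct and follows essentially the same route as the paper: both arguments compare $\frac{1}{y^{k}}\prod_{i=1}^{k}(m+i)$ to $\frac{1}{y^{j}}\prod_{i=1}^{j}(m+i)$ through the ratio, using $m+\text{(index)} \le \lfloor y \rfloor \le y$, and then reduce the bound by $1$ to the single factor $\frac{m+1}{y}\le 1$. The only cosmetic difference is that you chain one-step ratios $\frac{a_{t+1}}{a_{t}}=\frac{m+t+1}{y}$ while the paper bounds the multi-step ratio $\frac{1}{y^{k-j}}\prod_{i=1}^{k-j}(m+j+i)$ in one go; no gap either way.
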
 
    \begin{proof}
        If $j=k$ the statement holds trivially. If $j<k$ then $\displaystyle \frac{\displaystyle \frac{1}{y^{k}}\prod_{i=1}^{k}(m+i)}{\displaystyle \frac{1}{y^j}\prod_{i=1}^{j}(m+i)} = \frac{1}{y^{k-j}}\prod_{i=1}^{k-j}(m+j+i) \le 1$ (since $m+j+i \le m+k = \lfloor y \rfloor$). Clearly, $\displaystyle \frac{1}{y^{k}}\prod_{i=1}^{k}(m+i) \le \frac{1}{y^1}\prod_{i=1}^{1}(m+i) = \frac{m+1}{y} \le \frac{\lfloor y \rfloor}{y} \le \frac{y}{y} = 1$.
    \end{proof}

    \begin{lem}
        \label{lem:s_sumprod_n-m_1}
        $\displaystyle  \sum_{k=1}^{n - m}\frac{1}{y^{k}}\prod_{i=1}^{k}(m+i) \le  n - m$ for all real number $y \ge 1$ and all natural number $m$ such that $m<n$, with $n=\lfloor y \rfloor$.
    \end{lem}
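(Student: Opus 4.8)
The plan is to bound the sum termwise and then simply count the number of terms. Fix $y \ge 1$ and a natural number $m$ with $m < n$, where $n = \lfloor y \rfloor$; note that this forces $n - m \ge 1$, so the sum is nonempty and the bound $n-m$ is meaningful.

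First I would invoke Lemma \ref{lem:prod_n-m_1} with the choice $j = k$. Its hypotheses are met: we have $m < \lfloor y \rfloor = n$, and for every index $k$ appearing in the sum we have $1 \le k \le n - m = \lfloor y \rfloor - m$. The lemma therefore yields, for each such $k$,
\[
\frac{1}{y^{k}}\prod_{i=1}^{k}(m+i) \le 1.
\]

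Summing this inequality over $k = 1, \dots, n-m$ gives
\[
\sum_{k=1}^{n-m}\frac{1}{y^{k}}\prod_{i=1}^{k}(m+i) \le \sum_{k=1}^{n-m} 1 = n-m,
\]
which is the claimed bound.

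There is no real obstacle here; the only point requiring a moment's care is checking that the index range $1 \le k \le n-m$ matches exactly the range $k \le \lfloor y \rfloor - m$ demanded by Lemma \ref{lem:prod_n-m_1}, which it does since $n = \lfloor y \rfloor$. Everything else is a one-line aggregation of the termwise estimate.
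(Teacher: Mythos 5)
Your proof is correct and matches the paper's own argument exactly: both bound each summand by $1$ via Lemma \ref{lem:prod_n-m_1} and then sum over the $n-m$ terms. No differences worth noting.
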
 
    \begin{proof}
        $\displaystyle  \sum_{k=1}^{n - m}\frac{1}{y^{k}}\prod_{i=1}^{k}(m+i) \le  \sum_{k=1}^{n - m} 1 = n - m$ (Lemma \ref{lem:prod_n-m_1}).
    \end{proof}

    \begin{lem}
        \label{lem:sumprod_n-m_1}
        $\displaystyle  \sum_{k=1}^{n - m}\frac{1}{y^{k}}\prod_{i=1}^{k}(m+i) \le  2\left(\frac{y+m}{y-m}\right)$ for all real number $y \ge 1$ and all natural number $\displaystyle m < n$, with $n=\lfloor y \rfloor$.
    \end{lem}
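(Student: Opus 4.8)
\emph{Proof proposal.} The plan is to write $a_k:=\frac{1}{y^{k}}\prod_{i=1}^{k}(m+i)$ and $L:=y-m$, so that $N:=n-m=\lfloor y\rfloor-m\le y-m=L$ and the claim reads $\sum_{k=1}^{N}a_k\le\frac{2(y+m)}{y-m}$. First I would record the standing facts that $m<n=\lfloor y\rfloor$ forces $y\ge m+1$, hence $L\ge1$ and $a_1=\frac{m+1}{y}\le1$, and that by Lemma \ref{lem:prod_n-m_1} the sequence $(a_k)_{k=1}^{N}$ is nonincreasing with every term in $(0,1]$.

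The heart of the argument is an exponential bound on $a_k$. Writing each factor as $\frac{m+i}{y}=1-\frac{L-i}{y}$ and noting $0\le\frac{L-i}{y}<1$ for $1\le i\le N$ (the lower bound uses $i\le N\le L$, the upper bound uses $m\ge0$), the inequality $\log(1-t)\le-t$ gives
\[
a_k\le\exp\!\left(-\frac{1}{y}\sum_{i=1}^{k}(L-i)\right)=\exp\!\left(-\frac{k(2L-k-1)}{2y}\right)\le\exp\!\left(-\frac{k(L-1)}{2y}\right),
\]
the last step because $k\le N\le L$ gives $2L-k-1\ge L-1$. Summing the geometric series, with $\lambda:=\frac{L-1}{2y}>0$, yields $\sum_{k=1}^{N}a_k<\sum_{k\ge1}e^{-\lambda k}=\frac{1}{e^{\lambda}-1}<\frac1\lambda=\frac{2y}{L-1}$. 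It then remains to check $\frac{2y}{L-1}\le\frac{2(y+m)}{y-m}$; using $y+m=2y-L$ and $y-m=L$ this is equivalent to $y(L-2)\ge L(L-1)$, i.e.\ $y\ge\frac{L(L-1)}{L-2}$ (valid once $L>2$). This is where the hypotheses pay off: if $L\ge4$ then $\frac{L(L-1)}{L-2}\le L+2$, and $y=L+m\ge L+2$ as soon as $m\ge2$; so the estimate closes in the regime $m\ge2$, $L\ge4$.

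The remaining admissible pairs split into three easy cases. If $m=0$, then $\sum_{k=1}^{n}a_k=\sum_{k=1}^{n}\frac{k!}{y^{k}}\le\sum_{k=1}^{n}\frac{k!}{n^{k}}\le1$ by $y\ge n$ and Corollary \ref{cor:sumk!/n^k}, which is below $2=\frac{2(y+m)}{y-m}$. If $m=1$, then $\sum_{k=1}^{N}a_k\le Na_1\le(y-1)\cdot\frac{2}{y}<2<\frac{2(y+1)}{y-1}$. If $m\ge2$ but $L<4$, then $N=\lfloor L\rfloor\le3$, so $\sum_{k=1}^{N}a_k\le3a_1\le3$, whereas $\frac{2(y+m)}{y-m}=2+\frac{4m}{L}>2+m\ge4$. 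Since every nonnegative integer $m$ falls into exactly one of $\{m=0\}$, $\{m=1\}$, $\{m\ge2,\ L\ge4\}$, $\{m\ge2,\ L<4\}$, this proves the lemma.

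The delicate point, and the reason for the case split, is that replacing $2L-k-1$ by $L-1$ in the exponent costs roughly a factor $2$, so the clean geometric estimate $\frac{2y}{L-1}$ only beats $\frac{2(y+m)}{y-m}$ once $m\ge2$ and $L\ge4$; the borderline cases ($m\in\{0,1\}$, or $y$ only barely exceeding $m$) genuinely lie outside its reach and must be dispatched by the crude bounds $a_k\le a_1\le1$ together with Corollary \ref{cor:sumk!/n^k}. A secondary bookkeeping point is to keep the summation limit $N=\lfloor y\rfloor-m$ distinct from $L=y-m$: it is precisely $N\le L$ that legitimizes $2L-k-1\ge L-1$ for every $k$ occurring in the sum.
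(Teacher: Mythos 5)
Your proof is correct, but it follows a genuinely different route from the paper. The paper's argument is uniform: it splits the sum at the midpoint $m'=\lfloor (n-m)/2\rfloor$, invokes the monotonicity of Lemma \ref{lem:prod_n-m_1} to bound the whole sum by twice its first half, estimates each term of that half by $\left(\frac{m+k}{y}\right)^{k}\le\left(\frac{y+m}{2y}\right)^{k}$, and sums the geometric series with ratio $\frac{y+m}{2y}<1$, which lands exactly on $2\left(\frac{y+m}{y-m}\right)$ with no case distinctions. You instead bound each term by an exponential via $1-t\le e^{-t}$, obtaining $a_k\le e^{-k(L-1)/(2y)}$ with $L=y-m$, sum the geometric series to get $\frac{2y}{L-1}$, and then must reconcile this with the stated right-hand side, which forces the four-way split ($m=0$ via Corollary \ref{cor:sumk!/n^k}, $m=1$, $m\ge2$ with $L\ge4$, $m\ge2$ with $L<4$); all four cases check out, including the degenerate situation $L$ close to $1$ where your $\lambda$ would vanish, since that falls into the crude-bound cases. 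What each approach buys: your exponential estimate $\frac{2y}{L-1}$ is sharper than the paper's bound in the regime where $m$ is large compared with $L$, and it isolates cleanly why the constant $2$ and the ratio $\frac{y+m}{y-m}$ appear; the paper's halving trick is shorter, needs only the already-proved monotonicity lemma, and produces the target expression directly without boundary casework. The one bookkeeping point you were right to emphasize --- keeping $N=\lfloor y\rfloor-m$ distinct from $L=y-m$ so that $2L-k-1\ge L-1$ holds for every $k$ in the sum --- is exactly what makes your main estimate legitimate.
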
 
    \begin{proof}
       $\displaystyle \sum_{k=1}^{n-m}\frac{1}{y^{k}}\prod_{i=1}^{k}(m+i) = \sum_{k=1}^{m'} \frac{1}{y^{k}}\prod_{i=1}^{k}(m+i) + \sum_{k=m'+1}^{n-m} \frac{1}{y^{k}}\prod_{i=1}^{k}(m+i)$ for $\displaystyle m'=\left \lfloor \frac{n-m}{2} \right \rfloor$. Since every term in the first summation of the right part is greater or equal than every term in the second summation of the right part (Lemma \ref{lem:prod_n-m_1}), and the number of terms in the first summation is greater or equal than the number of terms in the second summation then $\displaystyle \sum_{k=1}^{n-m} \frac{1}{y^{k}}\prod_{i=1}^{k}\left(m+i\right) \le  2\sum_{k=1}^{m'} \frac{1}{y^{k}}\prod_{i=1}^{k}\left(m+i\right)$. Notice that, $\displaystyle \sum_{k=1}^{n-m} \frac{1}{y^{k}}\prod_{i=1}^{k}\left(m+i\right) \le 2\sum_{k=1}^{m'}\left ( \frac{m+k}{y} \right )^{k} \le 2\sum_{k=1}^{m'} \left( \frac{m+\left \lfloor \frac{n-m}{2} \right \rfloor}{y} \right )^{k} \le 2\sum_{k=1}^{m'} \left( \frac{n+m}{2y} \right )^{k} \le 2\sum_{k=1}^{m'} \left( \frac{y+m}{2y} \right )^{k} = 2\left(\sum_{k=0}^{m'} \left( \frac{y+m}{2y} \right )^{k} - 1 \right)$. Since $m <n = \lfloor y \rfloor \le y$ then $\displaystyle y+m < 2y$, $\displaystyle \frac{y+m}{2y} < 1$, and $\displaystyle \sum_{k=0}^{m'} \left ( \frac{y+m}{2y} \right )^{k} < \sum_{k=0}^{\infty} \left ( \frac{y+m}{2y} \right )^{k} \le \frac{1}{1-\displaystyle \frac{y+m}{2y}}=\frac{2y}{y-m}$ (geometric series). $\displaystyle \sum_{k=1}^{n-m} \frac{1}{y^{k}}\prod_{i=1}^{k}\left(m+i\right) \le 2 \left(\frac{2y}{y-m}-1\right) = 2\left(\frac{y+m}{y-m}\right)$.
    \end{proof}

    \begin{lem}
        \label{lem:prod_n-m_2}
        $\displaystyle y^{k}\prod_{i=0}^{k-1}\frac{1}{m-i} \le y^{j}\prod_{i=0}^{j-1}\frac{1}{m-i} \le \frac{y}{m} \le 1$ for all real number $y \ge 1$ and all natural numbers $m$, $k$, and $j$ such that $\displaystyle n < m$ and $1 \le j \le k \le m-n$. Here, $n=\lfloor y \rfloor$.
    \end{lem}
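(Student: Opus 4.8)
The plan is to reproduce the argument used for Lemma \ref{lem:prod_n-m_1}, with the ascending product $\prod_{i=1}^{k}(m+i)$ replaced by the descending reciprocal product $\prod_{i=0}^{k-1}\frac{1}{m-i}$, paying attention to where the hypotheses $y\ge 1$, $n<m$, and $k\le m-n$ are used. The entire proof reduces to one elementary estimate on the individual factors, so I do not expect a substantive obstacle; the only delicate point is tracking index ranges so that every denominator stays above $y$.

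First I would establish the governing inequality: since $m$ is a natural number with $m>n=\lfloor y\rfloor$, we have $m\ge n+1>y\ge 1$, and more generally $m-i\ge m-(k-1)\ge m-(m-n-1)=n+1>y$ for every $0\le i\le k-1$ whenever $k\le m-n$. Hence each $m-i$ occurring in the products is a positive integer and each ratio $\frac{y}{m-i}$ is strictly less than $1$; in particular all the products in the statement are well defined and positive.

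Next, for the leftmost inequality, if $j=k$ there is nothing to prove, and if $j<k$ I would form the quotient
\[
\frac{\,y^{k}\prod_{i=0}^{k-1}\frac{1}{m-i}\,}{\,y^{j}\prod_{i=0}^{j-1}\frac{1}{m-i}\,}
= y^{\,k-j}\prod_{i=j}^{k-1}\frac{1}{m-i}
= \prod_{i=j}^{k-1}\frac{y}{m-i}\le 1,
\]
since each factor is at most $1$ by the estimate above (here $i\le k-1\le m-n-1$). This yields $y^{k}\prod_{i=0}^{k-1}\frac{1}{m-i}\le y^{j}\prod_{i=0}^{j-1}\frac{1}{m-i}$.

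For the middle inequality I would invoke the monotonicity just proved with the index $1$ in place of $j$ (admissible because $1\le j\le k\le m-n$), obtaining $y^{j}\prod_{i=0}^{j-1}\frac{1}{m-i}\le y^{1}\prod_{i=0}^{0}\frac{1}{m-i}=\frac{y}{m}$. Finally $\frac{y}{m}\le 1$ follows at once from $m\ge n+1>y$. As noted, the only thing that requires care is checking that the constraint $k\le m-n$ forces $m-i\ge n+1>y$ throughout the relevant index range, which is exactly what keeps every factor below $1$.
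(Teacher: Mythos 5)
Your proof is correct and follows essentially the same route as the paper: the $j=k$ case is trivial, the case $j<k$ is handled by bounding the quotient of the two products by $1$ using $m-i\ge m-k+1\ge n+1>y$, and the bound $\frac{y}{m}\le 1$ follows from $m\ge n+1>y$. Your version is marginally cleaner in bounding each factor $\frac{y}{m-i}$ by $1$ directly (the paper bounds the quotient by $\left(\frac{y}{n+1}\right)^{k-j}$ first), but the idea is identical.
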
 
    \begin{proof}
        If $k=j$ the statement holds trivially. If $j<k$ then $\displaystyle \frac{\displaystyle y^{k}\prod_{i=0}^{k-1}\frac{1}{m-i}}{\displaystyle y^{j}\prod_{i=0}^{j-1}\frac{1}{m-i}} = y^{k-j}\prod_{i=0}^{k-j-1}\frac{1}{m-j-i} \le y^{k-j}\prod_{i=0}^{k-j-1}\frac{1}{m-k+1} \le \left(\frac{y}{n+1}\right)^{k-j} \le \frac{y}{y}= 1$. Clearly, $\displaystyle y^{k}\prod_{i=0}^{k-1}\frac{1}{m-i} \le y\prod_{i=0}^{0}\frac{1}{m-i} = \frac{y}{m} \le 1$.
    \end{proof}

    \begin{lem}
        \label{lem:s_sumprod_n-m_2}
        $\displaystyle \sum_{k=1}^{m-n}y^{k}\prod_{i=0}^{k-1}\frac{1}{m-i} \le m - n$ for all real number $y \ge 1$ and all natural number $m$ such that $m-n \ge 1$, with $n=\lfloor y \rfloor$.
    \end{lem}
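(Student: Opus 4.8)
The plan is to deduce the bound directly from the termwise estimate already established in Lemma \ref{lem:prod_n-m_2}, in exact parallel with the way Lemma \ref{lem:s_sumprod_n-m_1} follows from Lemma \ref{lem:prod_n-m_1}. So the first step is simply to check that the hypotheses line up: since $y\ge 1$ the quantity $n=\lfloor y\rfloor$ is a well-defined natural number, and the standing assumption $m-n\ge 1$ is precisely the condition $n<m$ needed to invoke Lemma \ref{lem:prod_n-m_2}. Moreover every index $k$ occurring in the sum satisfies $1\le k\le m-n$, which is exactly the range of validity in that lemma.

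The second step is to apply Lemma \ref{lem:prod_n-m_2} to each summand with the choice $j=k$, which gives $y^{k}\prod_{i=0}^{k-1}\frac{1}{m-i}\le \frac{y}{m}\le 1$ for every such $k$. Adding these $m-n$ inequalities then yields $\sum_{k=1}^{m-n}y^{k}\prod_{i=0}^{k-1}\frac{1}{m-i}\le\sum_{k=1}^{m-n}1=m-n$, which is the assertion.

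I do not expect a genuine obstacle here: all the real work has been front-loaded into Lemma \ref{lem:prod_n-m_2}, and what remains is only the bookkeeping of matching the summation range $1\le k\le m-n$ to that lemma's range of indices and of noting that $m-n\ge 1$ makes the sum nonempty (so the bound is nontrivial). This argument is a verbatim analogue of the proof of Lemma \ref{lem:s_sumprod_n-m_1}, with Lemma \ref{lem:prod_n-m_2} playing the role that Lemma \ref{lem:prod_n-m_1} played there.
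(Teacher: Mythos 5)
Your proof is correct and follows essentially the same route as the paper: both arguments reduce to the observation that each summand $y^{k}\prod_{i=0}^{k-1}\frac{1}{m-i}$ is at most $1$ and then sum the $m-n$ terms. The only cosmetic difference is that you cite Lemma \ref{lem:prod_n-m_2} for the termwise bound (mirroring how Lemma \ref{lem:s_sumprod_n-m_1} cites Lemma \ref{lem:prod_n-m_1}), whereas the paper re-derives the estimate $y^{k}\prod_{i=0}^{k-1}\frac{1}{m-i}\le\left(\frac{y}{n+1}\right)^{k}\le 1$ inline.
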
 
    \begin{proof}
        Clearly, $\displaystyle k-1 \le m-n-1 = m - \lfloor y \rfloor -1 \le m - y$ and $m-i \ge m-k+1 \ge n+1$ for all $k=1,\ldots,m-n$ and all $i=0,\ldots,k-1$. Then, $\displaystyle \sum_{k=1}^{m-n}y^{k}\prod_{i=0}^{k-1}\frac{1}{m-i} \le \sum_{k=1}^{m-n}\left(\frac{y}{m-k+1}\right)^{k} \le \sum_{k=1}^{m-n}\left(\frac{y}{n+1}\right)^{k} \le \sum_{k=1}^{m-n}\left(\frac{y}{y}\right)^{k} = \sum_{k=1}^{m-n} 1 =  m - n$.
    \end{proof}
    
    \begin{lem}
        \label{lem:sumprod_n-m_2}
        $\displaystyle \sum_{k=1}^{m-n}y^{k}\prod_{i=0}^{k-1}\frac{1}{m-i} \le 2\left(\frac{2y}{m-y+1}\right)$ for all real number $y \ge 1$ and all natural number $m$ such that $m-n \ge 1$, with $n=\lfloor y \rfloor$.
    \end{lem}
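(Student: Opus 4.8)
The plan is to follow the pattern of the proof of Lemma~\ref{lem:sumprod_n-m_1}. Write $M=m-n$ and, for $1\le k\le M$, set $\displaystyle t_k=y^{k}\prod_{i=0}^{k-1}\frac{1}{m-i}$, so the quantity to estimate is $S=\sum_{k=1}^{M}t_k$. Bounding each factor by $\frac{1}{m-i}\le\frac{1}{m-k+1}$ for $0\le i\le k-1$ gives $t_k\le\bigl(\tfrac{y}{m-k+1}\bigr)^{k}$; in particular $t_1=\tfrac{y}{m}$, and by Lemma~\ref{lem:prod_n-m_2} the sequence is non-increasing, $t_1\ge t_2\ge\cdots\ge t_M\ge 0$. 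Two bounds are thus available: the \emph{flat} bound $S\le M t_1=\tfrac{(m-n)y}{m}$ (also a consequence of Lemma~\ref{lem:s_sumprod_n-m_2}), good when $m-n$ is small, and a \emph{geometric} bound obtained by folding the sum in half, good when $m-n$ is large.

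The clean case is $M$ even. Folding at the exact midpoint $m'=M/2$, monotonicity bounds the second half termwise by the first half, which has the same number of terms, so $S\le 2\sum_{k=1}^{m'}t_k$. For $1\le k\le m'$ one has $m-k+1\ge m-m'+1=\tfrac{m+n}{2}+1=\tfrac{m+n+2}{2}$, hence $t_k\le\bigl(\tfrac{2y}{m+n+2}\bigr)^{k}$; since $m\ge n+2$ and $y<n+1$, this ratio is $<1$, and summing the geometric series gives $\sum_{k=1}^{m'}t_k<\tfrac{2y}{\,m+n+2-2y\,}$. Finally $y<n+1$ yields $m+n+2-2y>m-y+1$, so $\sum_{k=1}^{m'}t_k<\tfrac{2y}{m-y+1}$ and $S\le 2\bigl(\tfrac{2y}{m-y+1}\bigr)$, exactly as claimed. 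For $M$ odd I would peel a leading term, $\displaystyle S=t_1+\frac{y}{m}\sum_{j=1}^{M-1}y^{j}\prod_{i=0}^{j-1}\frac{1}{(m-1)-i}$, recognise the remaining sum as one of the same shape with $m$ replaced by $m-1$ and of \emph{even} length $M-1$, apply the previous paragraph to it, and recombine; alternatively, when $m-n$ is small the flat bound $S\le m-n$ already dominates $\tfrac{4y}{m-y+1}$ (automatic for $m-n\le 4$, since then $m(m-n-4)\le (m-n)(y-1)$ for every $y\ge 1$).

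I expect the routine ingredients — the monotonicity (already Lemma~\ref{lem:prod_n-m_2}) and the geometric summation — to give no trouble, and the real obstacle to be the bookkeeping for odd $M$. There the even-case bound applied to the shortened sum comes out as $\tfrac{4y}{(m-1)-y+1}=\tfrac{4y}{m-y}$, and after the peeling one is left with $\tfrac{y(m+3y)}{m(m-y)}$, which is $\le\tfrac{4y}{m-y+1}$ precisely when $3(m-y)^{2}\ge m+3y$; this holds once $m-n$ exceeds a modest multiple of $\sqrt{y}$, so together with the flat bound it handles all but a bounded family of parameter values, which must then be checked directly (estimating the then very short polynomial $S$ against $\tfrac{4y}{m-y+1}$). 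Making every case land below $2\bigl(\tfrac{2y}{m-y+1}\bigr)$ with the stated constant — rather than with a slightly larger one produced by summing a geometric series to infinity — is the part that requires care.
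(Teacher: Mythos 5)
Your even-length case is exactly the paper's proof: fold the sum at the midpoint, dominate the second half termwise by the first (Lemma \ref{lem:prod_n-m_2}), bound each surviving term by $\left(\frac{2y}{m+n+2}\right)^{k}$, sum the geometric series, and use $y<n+1$ to pass from $m+n+2-2y$ to $m-y+1$. Where you deviate is the odd case, and your caution there is justified: the paper folds at $m'=\lfloor\frac{m-n}{2}\rfloor$ and asserts that the first block has at least as many terms as the second, which is false when $m-n$ is odd (and at $m-n=1$ the paper's intermediate step degenerates to $\sum\le 2\sum_{k=1}^{0}(\cdot)=0$, which is absurd even though the final inequality still holds). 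Your peeling identity $S=\frac{y}{m}+\frac{y}{m}\widetilde S$, with $\widetilde S$ of the same shape for $m-1$ and of even length $m-1-n$, together with the flat bound $S\le (m-n)\frac{y}{m}$, is a genuine repair of this defect; note only that the flat bound follows from Lemma \ref{lem:prod_n-m_2} (monotonicity), not from Lemma \ref{lem:s_sumprod_n-m_2}, which gives merely $S\le m-n$.

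The one unfinished point is your final hedge that a residual family of parameters ``must then be checked directly.'' In fact that family is empty, so your plan closes completely. Put $d=m-y$, so $m-n-1<d\le m-n$ and in particular $m-n\le d+1$. If the flat bound fails, i.e.\ $(m-n)(d+1)>4m=4(d+y)$, then $4y<(m-n)(d+1)-4d\le (d+1)^{2}-4d=(d-1)^{2}$. If moreover your peeling condition $3(m-y)^{2}\ge m+3y$ fails, then $3d^{2}<d+4y<d+(d-1)^{2}$, i.e.\ $(2d-1)(d+1)<0$, forcing $d<\tfrac12$ and hence $m-n=1$; but for $m-n=1$ the flat condition $1\cdot(d+1)\le 4m$ is trivial, a contradiction. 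So for every odd $m-n\ge 3$ either the flat bound or the peeling argument applies (and $m-n=1$ is covered by the flat bound), each landing under $2\left(\frac{2y}{m-y+1}\right)$ with the stated constant. With that two-line verification added, your proof is correct, and for the odd case it is more careful than the paper's own.
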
 
    \begin{proof}
        $\displaystyle \sum_{k=1}^{m-n} y^{k}\prod_{i=0}^{k-1}\frac{1}{m-i} = \sum_{k=1}^{m'} y^{k}\prod_{i=0}^{k-1}\frac{1}{m-i} + \sum_{k=m'+1}^{m-n} y^{k}\prod_{i=0}^{k-1}\frac{1}{m-i}$ for $\displaystyle m'=\left \lfloor \frac{m-n}{2} \right \rfloor$. Since every term in the first summation of the right part is greater or equal than every term in the second summation of the right part (Lemma \ref{lem:prod_n-m_2}) and the number of terms in the first summation is greater or equal than the number of terms in the second summation then $\displaystyle \sum_{k=1}^{m-n} y^{k}\prod_{i=0}^{k-1}\frac{1}{m-i} \le 2\sum_{k=1}^{m'}  y^{k}\prod_{i=0}^{k-1}\frac{1}{m-i}$. Since $m-i < m-k+1$ for all $i=0,\ldots, k-1$ then, $\displaystyle \sum_{k=1}^{m-n} y^{k}\prod_{i=0}^{k-1}\frac{1}{m-i} \le 2 \sum_{k=1}^{m'} \left ( \frac{y}{m-k+1} \right )^{k} \le 2\sum_{k=1}^{m'} \left( \frac{y}{m-m'+1} \right )^{k} = 2\sum_{k=1}^{m'} \left( \frac{y}{m-\left \lfloor \displaystyle \frac{m-n}{2} \right\rfloor + 1} \right )^{k} \le 2\sum_{k=1}^{m'} \left( \frac{y}{m - \displaystyle \frac{m-n}{2} + 1} \right )^{k} \le 2\sum_{k=1}^{m'} \left( \frac{2y}{n+m+2} \right )^{k} = 2\left(\sum_{k=0}^{m'} \left( \frac{2y}{n+m+2} \right )^{k} - 1\right)$. Since $n+m+2 \ge n+n+3 > 2(n+1) > 2\lceil y \rceil \ge 2y $ then $\displaystyle \frac{2y}{n+m+2} < 1$ and $\displaystyle \sum_{k=0}^{m'} \left( \frac{2y}{n+m+2} \right )^{k} < \sum_{k=0}^{\infty} \left( \frac{2y}{n+m+2} \right )^{k} = \frac{1}{1-\displaystyle \frac{2y}{n+m+2}} = \frac{n+m+2}{n+m+2-2y}$. So, $\displaystyle \sum_{k=1}^{m-n}y^{k}\prod_{i=0}^{k-1}\frac{1}{m-i} \le 2\left(\frac{n+m+2}{n+m+2-2y} - 1\right) = 2\left(\frac{2y}{n+m+2-2y}\right)$ (geometric series). Since $n+1 \ge y$ then $\displaystyle \sum_{k=1}^{m-n}y^{k}\prod_{i=0}^{k-1}\frac{1}{m-i} \le 2\left(\frac{2y}{m-y+1}\right)$.
    \end{proof}    

\section{Conclusions and Future Work}
    We establish bounds for logarithmic integral function $li(x)$ using truncated asymptotic approximation series: (1) $\displaystyle li_{\underline{\omega},\alpha}(x) = \frac{x}{\log(x)}\left( \alpha\frac{\underline{m}!}{\log^{\underline{m}}(x)} + \sum_{k=0}^{\underline{m}-1}\frac{k!}{\log^{k}(x)} \right)$, and (2) $\displaystyle li_{\overline{\omega},\beta}=\frac{x}{\log(x)}\left( \beta\frac{\overline{m}!}{\log^{\overline{m}}(x)} + \sum_{k=0}^{\overline{m}-1}\frac{k!}{\log^{k}(x)} \right)$
    for all $x\ge e$ with $0 < \omega < 1$, $\alpha \in \{ 0, \underline{\kappa}\log(x) \}$, $\underline{m} = \lfloor \underline{\kappa}\log(x) \rfloor$, $\beta \in \{ \overline{\kappa}\log(x), 1 \}$, $\overline{m} = \lfloor \overline{\kappa}\log(x) \rfloor$, and $\underline{\kappa} < \overline{\kappa}$ the solutions of $\kappa(1-\log(\kappa)) = \omega$. Such bounds are of the order $\displaystyle |li(x)-li_{0}(x)| = |li_{1}(x)-li(x)| = O\left(\sqrt{\frac{x}{\log(x)}}\right)$ for all $x \ge e$. We conjecture that $li_{0}(x) \le \pi(x) \le li_{1}(x)$ and $\underline{li}(x) \le \pi(x) \le \overline{li}(x)$ and show that if one of such conjectures is true, the Riemann Hypothesis is true. Our future work will concentrate on trying to demonstrate such conjectures.

\printbibliography

\end{document}